\DeclareMathOperator{\stab}{Stab}
\DeclareMathOperator{\depth}{depth}
\DeclareMathOperator{\covol}{covol}
\newtheorem{theorem}{Theorem}[section]
\newtheorem{lemma}[theorem]{Lemma}
\newtheorem{prop}[theorem]{Proposition}
\newtheorem{defn}[theorem]{Definition}
\newtheorem{remark}[theorem]{Remark}
\newenvironment{proof}[1][Proof]{\begin{trivlist}
\item[\hskip \labelsep {\bfseries #1}]}{\end{trivlist}}
\begin{document}

\title{Strong accessibility for hyperbolic groups}
\author{Michael Hill}
\date{\today}
\maketitle

\begin{abstract}
This paper aims to give an account of theorem of Louder and Touikan \cite{LouderTouikan_strong} which shows that many hierarchies consisting of slender JSJ-decompositions are finite. In particular JSJ-hierarchies of $2$--torsion-free hyperbolic groups are always finite.
\end{abstract}

Suppose we are interested in how a group can decompose into a graph of groups. One natural question we can ask is if we can bound the number of vertices of our decomposition given some reasonable hypothesis. The first example of such a result is Grushko's Theorem \cite{Grushko} which implies that the number of vertices of a free splitting of a group (with non-trivial label) is at most the rank of that group. Other similar results include a result from Dunwoody \cite{Dunwoody_fp}, which gives an explicit bound for a finitely presented groups given that the edge groups are finite as well as a generalisation given by Bestvina and Feighn \cite{BestvinaFeighn_small} for when the edge groups are small. (A group is \emph{small} if it fails to act hyperbolically on any tree.) We call such results \emph{accessibility} results. Accessibility need not hold in general; for example Dunwoody \cite{Dunwoody_inaccess} has given an example of a finitely generated group which is not accessible over finite edge groups.

A natural extension of this question is to ask what happens if we recursively split over vertex groups. We naturally get the notion of a \emph{hierarchy} as a rooted tree with a group associated to each vertex, where the immediate descendants of a vertex correspond to the vertex stabilisers of a splitting of its group. We would like to know if our group has finite hierarchies with terminal vertices which have indecomposable groups and so is \emph{strongly accessible} in some sense. Over finite edge groups this question immediately reduces to the regular accessibility question as finite subgroup always fixes a point of a tree; however we run into problems as soon as we begin looking at infinite groups. For example since $F_2 \cong (F_2) *_{\mathbb{Z}}$ (as $F_2 \cong \left\langle a,b,c \mid cac^{-1} = b \right\rangle$) we can easily build an infinite hierarchy for free groups over cyclic edge groups. As such we instead try and show that some particular hierarchy with indecomposable terminal vertices is finite. For example the Haken hierarchy of a $3$--manifold is finite \cite{Haken}.

Delzant and Potyagailo \cite{DelzantPotyagailo_wrong} attempted to show that such a finite hierarchy always exists for finitely presented $2$--torsion-free groups over any elementary family of subgroups. Unfortunately their paper contains a fatal error which has been pointed out by Louder and Touikan \cite{LouderTouikan_strong}. In the same paper Louder and Touikan prove a weaker version of this result where an ascending chain condition is required to hold as well as showing that many hierarchies of JSJ-decompositions over slender edge groups are finite. (Recall that a group is \emph{slender} if all its subgroups are finitely generated.) It is this final result and argument that this paper attempts to give a clear and detailed account of. (See \thref{res:main} for the precise statement.)

\subsection*{Acknowledgement} 

While writing this paper, the author was supported by an EPSRC-funded studentship.

\section{Preliminaries}

We will begin by recalling the different actions a subgroup $K \leqslant G$ can have on a tree $T$. A group element $g \in G$ either acts trivially on some subtree of $T$ or has an axis consisting of points which are moved a minimal amount by $g$ \cite{Serre_trees}. In the former case we call $g$ \emph{elliptic} and in the latter case we say $g$ is \emph{hyperbolic}. This leads to the following classification of actions, which is similar to the one given in \cite{BestvinaFeighn_small} except with a distinction between linear and parabolic actions.
\begin{itemize}
\item If every element of $K$ is elliptic then there is some point in $T$ which is fixed by all of $K$ \cite{Serre_trees}. We call such an action \emph{elliptic}.
\item Suppose every hyperbolic element of $K$ has a common axis. We call this action \emph{linear} if the ends of this axis are fixed and \emph{dihedral} if they are not. A linear group can be written in the form $K \cong E*_{E}$ where $E$ is a subgroup of an edge group of $T$ and the inclusion maps are isomorphic. Meanwhile a dihedral group can be written in the form $K \cong A *_{E} B$ where $E$ is a subgroup of an edge group of $T$ and both $A$ and $B$ contain $E$ as an index $2$ subgroup. In either case if the edges of $T$ have slender stabiliser then $K$ is also slender.
\item Suppose that the axes of any two hyperbolic elements of $K$ have infinite intersection, but that no line is fixed by $K$. We call such a $K$ \emph{parabolic}. Such a group fixes a single point of $\partial T$ and is a strictly ascending HNN-extension $K \cong E*_{E}$ where $E$ is a subgroup of an edge group of $T$. Observe that $K$ has an infinitely generated subgroup which is generated by $\left\lbrace a^{t^n} \mid n \in \mathbb{Z} \right\rbrace$ where $t$ is the stable letter of the HNN-extension and $a \in E$ is not contained in the non surjective end of the HNN-extension. So $K$ is not slender.
\item Suppose that $K$ contains two hyperbolic elements whose axes have compact (possibly empty) intersection. Then $K$ contains $F_2$ as a subgroup (by the ping pong lemma) and so cannot be slender. We call $K$ \emph{hyperbolic}.
\end{itemize}

In particular a slender group can only act elliptically, linearly or dihedrally on a tree.


We will use the definition of a JSJ-decomposition given by Guirardel and Levitt. \cite{GuirardelLevitt_JSJ} First recall the definition of a minimal and reduced tree.

\begin{defn}
The action of a group $G$ on a tree $T$ is \emph{minimal} if there are no $G$--invariant proper subtrees. Such an action is said to be \emph{reduced} if either $T/G$ is a circle consisting of a single vertex and edge or the label of every vertex of valence $2$ in $T/G$ properly contains its edge groups.
\end{defn}

\begin{defn}
Given a group $G$ and sets of subgroups $\mathcal{A}$ and $\mathcal{B}$ we let $\mathrm{S}_{\mathcal{A},\mathcal{B}}$ be the set of reduced trees which $G$ acts on with edge groups in $\mathcal{A}$ and where each group in $\mathcal{B}$ acts elliptically. We will assume that $\mathcal{A}$ is closed under conjugation and taking subgroups. If  If $\mathcal{B}$ is empty then we shorten $\mathrm{S}_{\mathcal{A},\mathcal{B}}$ to $\mathrm{S}_{\mathcal{A}}$.   

A tree $T \in \mathrm{S}_{\mathcal{A},\mathcal{B}}$ is \emph{universally elliptic} if every edge group of $T$ is elliptic in any given tree in $\mathrm{S}_{\mathcal{A},\mathcal{B}}$. 

A $G$-tree $T_1$ \emph{dominates} another $T_2$ if there is a $G$-map $T_1 \rightarrow T_2$. Equivalently every vertex group of $T_1$ is elliptic in $T_2$.  

A tree $T \in \mathrm{S}_{\mathcal{A},\mathcal{B}}$ is a \emph{JSJ-tree} (over the class  $\mathcal{A}$ relative to $\mathcal{B}$) if it's universally elliptic and dominates all other universally elliptic trees in $\mathrm{S}_{\mathcal{A},\mathcal{B}}$. The graph of groups corresponding to a JSJ-tree is called a \emph{JSJ-splitting} or a \emph{JSJ-decomposition} (over the class  $\mathcal{A}$ relative to $\mathcal{B}$). 
\end{defn}

Roughly speaking, we restrict our attention to splittings which do not ``exclude'' any other, then choose a maximal one amongst these. A priori it need not be the case that a JSJ-splitting exists and in complete generality they do not. However in many important cases they do in fact exist. In particular the following is true.

\begin{lemma}\cite[Theorems 2.16 \& 2.20]{GuirardelLevitt_JSJ}\thlabel{res:JSJ_exist}
Let $G$ be finitely presented (relative to some finite set of subgroups $\mathcal{B}$). Then for any class $\mathcal{A}$ there exists some JSJ-splitting for $G$ over $\mathcal{A}$ (relative to $\mathcal{B}$) with finitely generated edge groups. If the splitting is non-relative then the vertex groups are also finitely generated. 
\end{lemma}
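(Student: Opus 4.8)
\textbf{Strategy.} By definition a JSJ-tree is a universally elliptic tree in $\mathrm{S}_{\mathcal{A},\mathcal{B}}$ whose deformation space dominates that of every universally elliptic tree, so the plan is to exhibit a deformation space that is maximal, with respect to the domination preorder, among those containing a universally elliptic tree. Two ingredients suffice: (i) a \emph{combination} step showing that the family of such deformation spaces is directed, i.e. any two have a common upper bound; and (ii) an \emph{accessibility} step producing a maximal element. Given both, the maximal element is automatically a maximum (if $m$ is maximal and $x$ is arbitrary, pick an upper bound $u$ of $m$ and $x$; maximality forces $u=m$, so $m$ dominates $x$), and any reduced simplicial tree in that deformation space is the desired JSJ-tree. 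Finite presentability is used only in step (ii) and in the last clause.

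\textbf{Combination.} Given universally elliptic $T_1,T_2\in\mathrm{S}_{\mathcal{A},\mathcal{B}}$, I would build a common refinement by blowing up $T_1$ along $T_2$. For each vertex $v$ of $T_1$ with stabiliser $G_v$, let $Y_v\subseteq T_2$ be the minimal $G_v$-invariant subtree (a point if $G_v$ is elliptic in $T_2$). Universal ellipticity of $T_2$ makes every edge stabiliser of $T_1$ incident to $v$ elliptic in $T_2$, hence, projecting a fixed point onto the $G_v$-invariant subtree $Y_v$, elliptic in $Y_v$; this is exactly the condition needed to replace $v$ by the graph of groups $Y_v/G_v$. Carrying this out at every orbit of vertices yields a tree $\hat{T}$ refining $T_1$ whose new vertex stabilisers are vertex stabilisers of $T_2$, so $\hat{T}$ also dominates $T_2$. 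Its edge stabilisers are subgroups of conjugates of edge stabilisers of $T_1$ or $T_2$, hence lie in $\mathcal{A}$ and are universally elliptic; the groups in $\mathcal{B}$ remain elliptic since they are elliptic in $T_1$. After collapsing to a reduced tree, $\hat{T}$ is in $\mathrm{S}_{\mathcal{A},\mathcal{B}}$, universally elliptic, and dominates both $T_1$ and $T_2$. This is directedness.

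\textbf{Accessibility (the main obstacle).} The remaining and hardest point is that this directed family actually has a maximal element. If it did not, the combination step would produce an infinite sequence of proper refinements $T_1\prec T_2\prec\cdots$ of reduced universally elliptic trees with strictly increasing deformation spaces. I would rule this out by an accessibility argument of Dunwoody--Bestvina--Feighn type, using that $G$ is finitely presented relative to $\mathcal{B}$: fix a finite presentation $2$-complex for $G$ rel $\mathcal{B}$, realise each $T_n$ by a pattern of disjoint essential tracks, resolve the maps $T_{n+1}\to T_n$, and bound the combinatorial complexity of these patterns, contradicting strict monotonicity. The core difficulty is obtaining this bound at all: over an arbitrary class $\mathcal{A}$ there is no global bound on the size of a reduced splitting (witness Dunwoody's inaccessible group), and what rescues the argument is that one only needs the bound along a chain of refinements of universally elliptic trees, where finite presentability does take effect. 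Alternatively one can phrase step (ii) through Zorn's lemma, taking the direct limit of such a chain as a $G$-action on an $\mathbb{R}$-tree and arguing, again via finite presentability, that it is dominated by a simplicial universally elliptic tree, so the chain cannot ascend forever; either way this is where all the work is.

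\textbf{Finite generation.} The track construction in step (ii) already produces trees whose edge stabilisers are finitely generated, since an essential track in a finite complex carries a finitely generated subgroup; reducing preserves this, so the JSJ-tree may be chosen with finitely generated edge groups. When $\mathcal{B}=\emptyset$, a finite graph of groups with finitely generated edge groups and finitely presented fundamental group has finitely presented — hence finitely generated — vertex groups, by the standard argument applied to the action on the Bass--Serre tree; this gives the last assertion.
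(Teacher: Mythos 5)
The paper does not prove this lemma: it is imported verbatim from Guirardel--Levitt (Theorems 2.16 and 2.20 of their Ast\'erisque volume), so there is no in-paper argument to compare against. Your outline does reproduce the strategy of the cited proof --- directedness of universally elliptic deformation spaces via common refinements, a Dunwoody-type accessibility argument on a relative presentation complex to get a maximal element, and finite generation of edge groups from the fact that they arise as track stabilisers --- so as a reconstruction it is aimed correctly.

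Two points deserve flagging. First, in the combination step you attribute the wrong hypothesis: to blow up a vertex $v$ of $T_1$ into the minimal $G_v$-subtree of $T_2$ you need the edge stabilisers of $T_1$ at $v$ to be elliptic in $T_2$, which comes from universal ellipticity of $T_1$, not of $T_2$; universal ellipticity of $T_2$ is what you then use to conclude that the \emph{new} edges of $\hat{T}$ are universally elliptic. Since both trees are assumed universally elliptic the construction goes through, but the roles should be kept straight. Second, and more seriously, the accessibility step --- which you yourself identify as ``where all the work is'' --- is not actually carried out, and it is the entire content of the theorem. In particular, the passage from ``each $T_n$ admits a resolution of uniformly bounded complexity'' to ``the chain of deformation spaces stabilises'' needs an argument: a resolution dominates the tree it resolves but need not lie in the same deformation space, so one must track how the nested track patterns refine one another and invoke Dunwoody's bound on pairwise non-parallel tracks to cap the number of strict refinements. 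A small further quibble: in the non-relative case the vertex groups of a cocompact splitting with finitely generated edge groups are finitely generated, but not in general finitely presented unless the edge groups are finitely presented; fortunately finite generation is all the lemma asserts.
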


Uniqueness of JSJ-trees does not hold in general, although one can often find a canonical choice for one-ended groups. However all of the JSJ-trees in a class live in a common deformation space. (See \cite{GuirardelLevitt_JSJ}.) From this it follows that the stabilisers of the vertices (which aren't in $\mathcal{A}$) do not depend on the choice of JSJ-tree.  Further the vertices of a JSJ-tree can be split into two classes.

\begin{defn}
A vertex of a JSJ-tree is called \emph{rigid} if it is elliptic in every tree in $\mathrm{S}_{\mathcal{A},\mathcal{B}}$. Otherwise it is called \emph{flexible}.
\end{defn}

The flexible vertices should be thought of as analogous to the Seifert-fibred components of a JSJ-decomposition of a $3$--manifold. The following result demonstrates this.

%
%

\begin{lemma}\cite[Theorem 6.2]{GuirardelLevitt_JSJ}\thlabel{res:flexiblegroups}
Let $v$ be a flexible vertex of a slender JSJ-tree (relative to a finite set of finitely generated subgroups $\mathcal{B}$.) Then the stabiliser $G_v$ of $v$ is either slender or QH (quadratically hanging) with slender fibre. In other words, if $G_v$ is not slender there is a short exact sequence
\begin{equation*}
0 \:\rightarrow\: A \:\rightarrow\: G_v \:\rightarrow\: \pi_1(\Sigma) \:\rightarrow\: 0
\end{equation*}
where $A$ is slender and $\Sigma$ is a $2$--dimensional orbifold with a non-trivial boundary.
\end{lemma}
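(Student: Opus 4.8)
The plan is to use the flexibility of $v$ to manufacture an abundance of splittings of $G_v$ over slender subgroups, feed these into the Rips machine, and read the conclusion off the resulting structure theorem for stable actions on $\mathbb{R}$--trees.

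First I would produce a relative splitting of $G_v$. Since $v$ is flexible, $G_v$ fails to be elliptic in some tree $T \in \mathrm{S}_{\mathcal{A},\mathcal{B}}$; passing to the minimal $G_v$--invariant subtree of $T$ yields a nontrivial splitting $\Lambda$ of $G_v$ whose edge groups lie in $\mathcal{A}$, hence are slender and finitely generated. The subgroups of $\mathcal{B}$ contained in $G_v$ and the stabilisers $G_{e_1},\dots,G_{e_k}$ of the edges of the JSJ--tree incident to $v$ are universally elliptic in $G$, so they act elliptically on $T$ and therefore are elliptic in $\Lambda$. Writing $\mathcal{P}$ for this finite family of subgroups of $G_v$, we have exhibited a nontrivial splitting of $G_v$ over $\mathcal{A}$ relative to $\mathcal{P}$; note $G_v$ is finitely generated relative to the subgroups of $\mathcal{B}$ it contains by \thref{res:JSJ_exist}.

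Next I would show that $G_v$ has no nontrivial universally elliptic splitting over $\mathcal{A}$ relative to $\mathcal{P}$. If it had one, say $T^*$, then since the groups in $\mathcal{P}$ --- in particular the incident edge groups --- are elliptic in $T^*$, one may refine the JSJ--tree by blowing up the vertex $v$ into $T^*$. The resulting $G$--tree lies in $\mathrm{S}_{\mathcal{A},\mathcal{B}}$, and its new edge groups are universally elliptic in $G$: given any $S \in \mathrm{S}_{\mathcal{A},\mathcal{B}}$, the minimal $G_v$--subtree of $S$ is a relative splitting of $(G_v,\mathcal{P})$ over $\mathcal{A}$, so the edge groups of $T^*$, being universally elliptic relative to $\mathcal{P}$, are elliptic in it, hence in $S$. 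This produces a universally elliptic $G$--tree strictly refining the JSJ--tree, contradicting the fact that a JSJ--tree dominates every universally elliptic tree and, being reduced, cannot be properly refined inside its own deformation space. Consequently every nontrivial $\mathcal{A}$--splitting of $G_v$ relative to $\mathcal{P}$ fails to be universally elliptic; applying this to $\Lambda$ one obtains a second such splitting hyperbolic--hyperbolic with it, which is precisely the input required by the Rips machine.

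Then I would run the Rips machine. The two incompatible splittings give, by the standard scaling and limiting construction, a nontrivial action of $G_v$ on an $\mathbb{R}$--tree with arc stabilisers contained in slender subgroups; since slender groups are small, this action is stable, so the Bestvina--Feighn / Sela structure theorem for stable small actions, in its form relative to the finite family $\mathcal{P}$, presents the $\mathbb{R}$--tree as a graph of actions whose pieces are simplicial, axial (linear), or of surface type. A simplicial part would yield a universally elliptic $\mathcal{A}$--splitting of $G_v$ relative to $\mathcal{P}$, which by the previous step is trivial; hence there is a single piece. If it is axial, then $G_v$ acts on a line with slender kernel, so $G_v$ is slender --- the non-slender alternatives of the classification of tree actions recalled in Section~1 being excluded because arc stabilisers are slender. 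If it is of surface type, then $G_v$ is QH, giving the short exact sequence with $A$ slender and $\Sigma$ a $2$--dimensional orbifold, and $\partial\Sigma\neq\varnothing$ because the incident edge groups --- nonempty, as $v$ is a vertex of a reduced tree admitting a nontrivial splitting --- are conjugate into boundary subgroups, a closed $\Sigma$ furnishing further universally elliptic curve splittings in contradiction with the previous step.

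The main obstacle is this last step: correctly invoking the relative Rips machine, including the finite-generation and stability bookkeeping for $(G_v,\mathcal{P})$, and --- above all --- justifying that the absence of a nontrivial universally elliptic relative splitting forces the simplicial part of the resolution to vanish. This is the technical heart of Guirardel--Levitt's argument and rests on the interplay between deformation spaces and the Rips machine; by comparison, the axial-versus-surface dichotomy and the non-emptiness of $\partial\Sigma$ are routine once the $\mathbb{R}$--tree has been analysed.
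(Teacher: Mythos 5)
You should first be aware that the paper does not prove this lemma: it is imported wholesale as Theorem~6.2 of Guirardel--Levitt \cite{GuirardelLevitt_JSJ}, so your sketch can only be measured against that source rather than against anything in the text. Your overall strategy --- produce a non-elliptic splitting $\Lambda$ of $G_v$ relative to the incident edge groups and $\mathcal{B}$, show that $(G_v,\mathcal{P})$ admits no nontrivial universally elliptic relative splitting by blowing up $v$ in the JSJ tree, and then analyse the resulting ``totally flexible'' piece --- is indeed the Guirardel--Levitt route, and your first two steps are essentially sound (for the refinement step the cleaner conclusion is that the JSJ tree dominates the refinement, so $G_v$ is elliptic in $T^*$ and $T^*$ is trivial; the appeal to ``reduced trees cannot be refined in their deformation space'' is not quite the right invocation).

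The genuine gaps are all in the third step. First, two hyperbolic--hyperbolic splittings do not, by ``scaling and limiting'', yield a nontrivial non-simplicial action on an $\mathbb{R}$--tree: a single pair only gives combinatorial data (Guirardel's core of the product of the two Bass--Serre trees, or Fujiwara--Papasoglu's enclosing construction), and to reach an $\mathbb{R}$--tree one needs a sequence of splittings of unbounded complexity; producing and controlling that sequence is precisely where the work lies. Second, stability of the limit action does not follow from smallness of the arc stabilisers --- smallness and stability are independent hypotheses in the Bestvina--Feighn theorem --- and establishing stability is exactly where finite presentability of $G_v$ and an accessibility or ascending-chain argument on arc stabilisers must be used; this cannot be waved through. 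Third, your argument for $\partial\Sigma\neq\emptyset$ is backwards: the curve splittings of a \emph{closed} $\Sigma$ are hyperbolic--hyperbolic with one another and hence are \emph{not} universally elliptic, so they yield no contradiction with your second step. The correct reason boundary appears is that each incident edge group of $v$ is elliptic in every splitting of $G_v$ dual to an essential curve and must therefore be conjugate into an (extended) boundary subgroup; and in the degenerate case where the JSJ tree is a single point and $\mathcal{B}=\emptyset$ (e.g.\ $G$ a closed surface group), $\Sigma$ genuinely can be closed, so the statement as quoted implicitly assumes the ambient splitting is nontrivial.
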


Such groups have very controlled JSJ-decompositions.

\begin{lemma}
Let $G$ be a non-slender group which is QH with slender fibre. The vertices of a slender JSJ-decomposition of $G$ have slender stabiliser. 
\end{lemma}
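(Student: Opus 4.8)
The plan is to exploit the hypothesis that $\Sigma$ has non-empty boundary: this forces $\pi_1(\Sigma)$ to be a finitely generated virtually free group, and hence (Karrass--Pietrowski--Solitar) the fundamental group of a finite graph of \emph{finite} groups. Write the short exact sequence coming from the assumption that $G$ is QH with slender fibre (as in \thref{res:flexiblegroups}) as $1 \to A \to G \xrightarrow{\pi} \pi_1(\Sigma) \to 1$ with $A$ slender. Since $G$ is not slender, $\pi_1(\Sigma)$ cannot be finite (else $G$ would be slender-by-finite), so a reduced graph-of-finite-groups decomposition of $\pi_1(\Sigma)$ has a non-trivial Bass--Serre tree $S$. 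Regarding $S$ as a $G$-tree through $\pi$ produces a $G$-tree $\mathcal T_1$ whose vertex and edge stabilisers are the preimages $\pi^{-1}(F)$ of finite subgroups $F \leqslant \pi_1(\Sigma)$; each such preimage is an extension of a finite group by the slender group $A$, hence slender, so $\mathcal T_1$ lies in $\mathrm S_{\mathcal A}$, where $\mathcal A$ is the (conjugation- and subgroup-closed) class of slender subgroups of $G$. The strategy is to show that $\mathcal T_1$ is universally elliptic and then to read off the conclusion from the fact that a JSJ-tree dominates every universally elliptic tree.

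The one genuine intermediate step is to show that $A$ acts trivially on every $T' \in \mathrm S_{\mathcal A}$. Being slender, $A$ cannot act parabolically or as a ``hyperbolic'' group, so by the classification of Section~1 its action on $T'$ is elliptic, linear, or dihedral. In the linear or dihedral case $A$ contains a hyperbolic element $a$, and all hyperbolic elements of $A$ share the axis $\ell$ of $a$; then for any $g \in G$ we have $g\ell = g\cdot\mathrm{axis}(a) = \mathrm{axis}(gag^{-1}) = \ell$, the last equality because $gag^{-1}$ lies in $A$ and is hyperbolic. Thus $\ell$ is $G$-invariant and, by minimality, $T' = \ell$ is a line; but an action on a line has kernel contained in an edge stabiliser (hence slender) and image a subgroup of the infinite dihedral group, so $G$ would be slender, contrary to hypothesis. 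Hence $A$ acts elliptically; since $A$ is finitely generated it fixes a point, and because $A$ is normal its fixed-point set is a non-empty $G$-invariant subtree, so minimality gives $\mathrm{Fix}(A) = T'$.

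Granting this, every $T' \in \mathrm S_{\mathcal A}$ is genuinely a $\pi_1(\Sigma)$-tree, so each edge stabiliser $\pi^{-1}(F)$ of $\mathcal T_1$ is elliptic in $T'$ (a finite subgroup of $\pi_1(\Sigma)$ fixes a point of any $\pi_1(\Sigma)$-tree); that is, $\mathcal T_1$ is universally elliptic. Now let $T$ be a slender JSJ-tree of $G$. By definition $T$ dominates every universally elliptic tree in $\mathrm S_{\mathcal A}$, in particular $\mathcal T_1$, so there is a $G$-map $f \colon T \to \mathcal T_1$. For a vertex $v$ of $T$, the group $G_v$ fixes $f(v)$ and is therefore contained in the stabiliser of $f(v)$ in $\mathcal T_1$, which is slender; since $\mathcal A$ is closed under subgroups, $G_v$ is slender, as required.

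The main obstacle is really just the verification that $\mathcal T_1$ is universally elliptic — equivalently, the step that the fibre $A$ lies in the kernel of every slender action — and everything after that is a formal manipulation of the definition of a JSJ-tree. I would also stress that the boundary hypothesis on $\Sigma$ is essential and is exactly what is used, through the virtual freeness of $\pi_1(\Sigma)$: for a closed hyperbolic surface group the slender JSJ is the whole group, which is not slender.
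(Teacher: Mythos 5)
Your proof is correct, and while it shares its central lemma with the paper --- namely that the slender normal fibre $A$ must act trivially on every minimal slender $G$--tree, because a linear or dihedral action would force the whole tree to be a $G$--invariant line and hence force $G$ to be slender --- the endgame is genuinely different. The paper uses triviality of the $A$--action to descend \emph{every} slender splitting of $G$ to a splitting of $\pi_1(\Sigma)$, identifies the slender JSJ of $\pi_1(\Sigma)$ explicitly as the Grushko decomposition into finite groups and a free factor, and then lifts back to conclude the vertex groups of the JSJ of $G$ are finite-by-$A$. You instead build a single universally elliptic tree $\mathcal{T}_1$ upstairs (the pullback of a graph-of-finite-groups tree for the virtually free group $\pi_1(\Sigma)$) with slender vertex stabilisers, and invoke the domination clause in the definition of a JSJ-tree to trap every JSJ vertex group inside a slender group. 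This is a slicker formal argument: it avoids having to characterise the JSJ of $G$ or of $\pi_1(\Sigma)$ at all, and it handles the trivial-fibre case uniformly rather than separately as the paper does. Two small points worth making explicit if you write this up: you should note that $\mathcal{T}_1$ is reduced and minimal as a $G$--tree (which follows because the $\pi_1(\Sigma)$--action can be taken reduced and the vertex/edge groups are just preimages), so that it genuinely lies in $\mathrm{S}_{\mathcal{A}}$ and domination applies; and your step from ``$A$ elliptic'' to ``$A$ acts trivially'' via the $G$--invariance of $\mathrm{Fix}(A)$ plus minimality is a clean alternative to the paper's use of reducedness, and is valid since fixed-point sets are subtrees and $A$ is finitely generated.
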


\begin{proof}
First suppose that the fibre is trivial, so that $G = \pi_1(\Sigma)$ where $\Sigma$ is a $2$--dimensional orbifold with a non-trivial boundary. Observe that we can decompose $G$ as $G_1 * \cdots * G_k * F_r$ where $G_1, \cdots , G_k$ are the finite groups associated to the orbifold points of $\Sigma$ and $F_r$ is the free group of rank $r$. Hence $G$ has Grushko decompositions over trivial edge groups with finite vertex stabilisers. Such decompositions are exactly the slender JSJ-splittings of $G$. 

Now suppose the fibre $A$ is non-trivial. Let $G$ act on a (reduced) tree $T$ with slender edge stabilisers. If $A$ acts trivially on such a $T$ then we get an action of $\frac{G}{A} \cong \pi_1(\Sigma)$ on $T$. So if $A$ acts trivially on $T$ we can lift from $\pi_1(\Sigma)$ to see that the slender JSJ-trees of $G$ have slender vertex groups, containing $A$ with finite index. Now fix $T$. If $A$ acts elliptically on $T$ then since $A$ is normal in $G$ we have some vertex $v \in T$ such that $A \leqslant G_{gv}$ for any $g \in G$. Since $T$ is reduced for any vertex $u \in T$ there is $g \in G$ such that $u \in [v,gv]$ and so $A \leqslant G_u$. Hence $A$ acts trivially on $T$. If $A$ doesn't act elliptically on $T$ it must fix a line as it's slender. As $A$ is normal in $G$ we see that $T$ must be exactly this line. However this implies that $G$ is slender which contradicts our assumption on $G$. $\square$
\end{proof}

We will define a hierarchy using Bass-Serre trees instead of the standard method using graphs of groups. While there are advantages to both approaches the present author believes this to be better one for this argument, particularly for Section~\ref{sec:find_trees}.

\begin{defn}
A \emph{hierarchy} $\mathcal{H}$ for a group $G$ (over a class $\mathcal{A}$) is a rooted tree where for each vertex $v$ of $\mathcal{H}$ we assign a subgroup $G_v \leqslant G$ with a minimal action on a tree $T_v$ (with edge groups in $\mathcal{A}$ and) where the following conditions are satisfied.
\begin{itemize}
\item The initial vertex of $\mathcal{H}$ is assigned $G$ as its group.
\item If $T_v$ is a point for some $v \in \mathcal{H}$ then $v$ is a terminal vertex of $\mathcal{H}$.  
\item Otherwise there is a natural one-to-one correspondence between the immediate descendants of a vertex $v \in \mathcal{H}$ and the vertices of the tree $T_v$. Natural in this context means that the associated groups do not change under the correspondence. Henceforth we will often abuse notation and treat the vertices related under this correspondence as the same object.
\item $\mathcal{H}$ is `conjugation invariant'. More precisely if $w$ and $gw$ are vertices of some $T_v$ (with $g \in G_v$), then the corresponding sub-hierarchies starting at $w$ and $gw$ are identical except that all the groups and labels for the vertices are conjugated by $g$. Equivalently $G$ acts naturally on $\mathcal{H}$ and the stabiliser of any vertex is its associated group.
\end{itemize}
\end{defn}

It is clear that one reobtains the traditional definition of a hierarchy by quotienting everything by $G$.

%
%
%
%
%
%
%

\begin{defn}
The \emph{depth} of a vertex $v$ in a hierarchy $\mathcal{H}$, denoted by $\depth(v)$, is its distance from the initial vertex. The \emph{depth} of a hierarchy $\mathcal{H}$, denoted by $\depth(\mathcal{H})$, is the supremum of the depths of its vertices. 

The \emph{$n$\textsuperscript{th}--level} of $\mathcal{H}$, denoted by $\mathcal{H}^n$, is the set of all the vertices of $\mathcal{H}$ with depth $n$. 

We say that $\mathcal{H}$ is \emph{finite} if $\depth(\mathcal{H})$ is finite and the graph of groups associated to each action is finite. Equivalently $\mathcal{H}$ has finitely many $G$--orbits of vertices.
\end{defn}

\begin{defn}
A \emph{JSJ-hierarchy} (over $\mathcal{A}$ relative to $\mathcal{B}$) is a hierarchy where the action associated to every vertex is on a JSJ-tree (over $\mathcal{A}$ relative to $\mathcal{B}$.) If the group associated to a vertex is in $\mathcal{A}$ then we insist that it is terminal.
\end{defn}

Unless mentioned otherwise all JSJ-hierarchies will be non-relative and over the class of slender subgroups. Also note that the JSJ-trees \emph{don't} need be canonical (such the Bowditch JSJ-tree \cite{Bowditch_canon_JSJ}) and can instead be any maximal splitting.  

\begin{remark}
Recall that the vertex groups of a splitting of a finitely presented group over finitely presented edge groups are themselves finitely presented. In particular JSJ-hierarchies over slender edges for finitely presented groups always exist by \thref{res:JSJ_exist}.
\end{remark}

%
%
%
%
%
%
%

Subgroups which are elliptic on each level of the hierarchy will play an important role.

\begin{defn}
A subgroup of $H \leqslant G$ is $\mathcal{H}$--elliptic if it's either contained in a terminal vertex of $\mathcal{H}$ or there is an infinite sequence of vertices $\left\lbrace v_n \right\rbrace$ with $H \leqslant G_{v_n}$ for all $n$ and where each $v_{n+1}$ is an immediate descendent of $v_{n}$.
\end{defn}

Suppose that $H \leqslant G$ is non-slender. Suppose that $H \leqslant G_v$ for some $v \in \mathcal{H}$. Since $T_v$ is a slender tree $H$ can be contained in at most one vertex group of $T_v$. Thus by iterating we see that $H$ is contained in at most one vertex at each level of $\mathcal{H}$. So if $H$ is also $\mathcal{H}$-elliptic then it is contained in exactly one vertex at each level of $\mathcal{H}$ and so if $H \leqslant G_v$ then it acts elliptically on $T_v$.

\begin{lemma}\thlabel{res:JSJbound}
Suppose that $\mathcal{H}$ is a JSJ-hierarchy over slender edge groups (relative to $\mathcal{B}$) for a group $G$ and that $\mathcal{K}$ is a finite slender hierarchy also for $G$ which has terminal vertex groups which are either slender or $\mathcal{H}$-elliptic. Then $\mathcal{H}$ is also finite and moreover 
\begin{equation*}
\depth{\mathcal{H}} \leq \depth{\mathcal{K}} + 1
\end{equation*}
\end{lemma}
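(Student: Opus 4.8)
The plan is to induct on $\depth(\mathcal{K})$, comparing $\mathcal{H}$ level-by-level against $\mathcal{K}$. Write $d = \depth(\mathcal{K})$. The base case is essentially the accessibility statement for a single JSJ-splitting relative to the terminal data of $\mathcal{K}$: if $d = 0$ then $G$ itself is either slender or $\mathcal{H}$-elliptic, so either $G$ acts elliptically on its own JSJ-tree (forcing $T_G$ to be a point, i.e. $\depth(\mathcal{H}) = 0$) or $G$ is slender, in which case a slender group has a trivial JSJ-splitting over slender subgroups and again $\depth(\mathcal{H}) \leq 1$. For the inductive step I would take the root JSJ-tree $T_G$ of $\mathcal{H}$ and the root slender tree $S$ of $\mathcal{K}$. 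The key move is to push the hierarchy $\mathcal{K}$ down into each vertex group of $T_G$: given a vertex $v$ of $T_G$ with stabiliser $G_v$, intersect $G_v$ with the tree $S$ to obtain a splitting of $G_v$ with slender edge groups whose vertex stabilisers are of the form $G_v \cap G_w^g$ for vertices $w$ of $S$. This is the standard subgroup-action construction, and the resulting minimal subtree gives $G_v$ a finite slender hierarchy $\mathcal{K}_v$ of depth at most $d - 1$ (one level is consumed passing from $G$ to $G_v$) whose terminal vertex groups are intersections $H \cap G_v$ of $G_v$ with terminal groups $H$ of $\mathcal{K}$.

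The technical heart is checking that each such terminal group $H \cap G_v$ is again slender or $\mathcal{H}$-elliptic, so that the inductive hypothesis applies to $G_v$ with its hierarchy $\mathcal{K}_v$ and the sub-hierarchy of $\mathcal{H}$ hanging below $v$. If $H$ is slender this is immediate since subgroups of slender groups are slender. If $H$ is $\mathcal{H}$-elliptic and non-slender, I would use the observation recorded just before the lemma: a non-slender $\mathcal{H}$-elliptic subgroup lies in exactly one vertex at each level of $\mathcal{H}$, and in particular acts elliptically on every JSJ-tree it meets. When we pass from the root of $\mathcal{H}$ to a vertex $v$ of $T_G$, either $H \cap G_v$ is slender (nothing to prove) or it is non-slender, in which case $H$ — being non-slender and contained in the conjugate $G_v^g$ that also contains $H \cap G_v$ — must itself be the unique vertex of $T_G$ containing that copy, so $H \cap G_v$ remains $\mathcal{H}$-elliptic relative to the sub-hierarchy at $v$. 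I would state and prove this compatibility carefully as it is the step most likely to hide a gap: one must make sure that "$\mathcal{H}$-elliptic in $\mathcal{H}$" localises correctly to "$\mathcal{H}$-elliptic in the sub-hierarchy below $v$", using that the infinite descending sequence of vertices witnessing $\mathcal{H}$-ellipticity eventually (in fact immediately) descends below $v$.

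Once the localisation is in place, the induction runs: each vertex group $G_v$ of $T_G$ satisfies the hypotheses of the lemma with $\mathcal{K}_v$ of depth $\leq d-1$, so the sub-hierarchy of $\mathcal{H}$ below $v$ has depth $\leq (d-1) + 1 = d$, and adding the single edge from the root gives $\depth(\mathcal{H}) \leq d + 1$. Finiteness of $\mathcal{H}$ follows simultaneously: $T_G$ has finitely many $G$-orbits of vertices (the root splitting of $\mathcal{H}$ is a splitting of a finitely presented group over finitely presented edge groups, so it has a finite quotient graph of groups), each $G_v$ inherits finite presentation, and by induction each sub-hierarchy below $v$ is finite, so $\mathcal{H}$ has finitely many $G$-orbits of vertices altogether. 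The main obstacle, as indicated, is the bookkeeping around $\mathcal{H}$-ellipticity under restriction to subgroups — in particular ruling out the pathological possibility that $H \cap G_v$ is non-slender while somehow failing to inherit an infinite elliptic chain — and this is exactly where the slenderness of the edge groups of JSJ-trees (so that a non-slender subgroup sits inside a unique vertex) does the real work.
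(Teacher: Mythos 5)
There is a genuine gap, and it sits exactly where your induction needs to close. You assert that pushing $\mathcal{K}$ down to a vertex group $G_v$ of the root JSJ-tree $T_G$ yields a hierarchy $\mathcal{K}_v$ of depth at most $d-1$ because ``one level is consumed passing from $G$ to $G_v$.'' That depth drop is not automatic: it happens precisely when $G_v$ is elliptic in the root tree $S$ of $\mathcal{K}$, so that $G_v$ sits inside a vertex group of $S$ and the construction of $\mathcal{K}_v$ starts one level down. If $G_v$ acts non-elliptically on $S$, its minimal subtree in $S$ already gives a nontrivial splitting of $G_v$ at level $0$, no level is consumed, and $\depth(\mathcal{K}_v)$ can equal $d$. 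With only $\depth(\mathcal{K}_v)\le d$ the induction gives nothing better than $\depth(\mathcal{H})\le d+2$ at each step and never terminates. The ellipticity you need is exactly the definition of a \emph{rigid} vertex of the JSJ-tree (elliptic in every tree of $\mathrm{S}_{\mathcal{A},\mathcal{B}}$, hence in $S$), so your argument is correct only for rigid vertices.

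The flexible vertices require a separate input that your proposal does not contain: by \thref{res:flexiblegroups} a flexible vertex group of a slender JSJ-tree is either slender or QH with slender fibre, and the auxiliary lemma following it shows that the slender JSJ-decomposition of a non-slender QH group has slender (hence terminal) vertex groups. Therefore the subhierarchy of $\mathcal{H}$ below a flexible vertex has depth at most $1\le\depth(\mathcal{K})$, with no appeal to $\mathcal{K}$ at all. Combining the two cases --- strict depth drop at rigid vertices via \thref{res:passdown_hierarchy}(3), and the QH structure theorem at flexible vertices --- is what yields $\depth(\mathcal{H}_v)\le\depth(\mathcal{K})$ for every $v\in\mathcal{H}^1$ and hence the stated bound. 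Your handling of the terminal groups (subgroups of slender or $\mathcal{H}$-elliptic groups remain slender or $\mathcal{H}$-elliptic, and ellipticity localises to the subhierarchy below $v$) is fine and in fact easier than you suggest; the missing idea is the rigid/flexible dichotomy.
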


We'll save the proof of this for Section~\ref{sec:pass_down} as it fits in naturally with what we are doing there.

The bulk of this paper will be spent trying to massage hierarchies until they satisfy the conditions of \thref{res:goal}. As a way of measuring how far away from doing this we introduce the following notions.

\begin{defn}
A \emph{$\mathcal{H}$--complex} $X$ is a $2$--dimensional connected simplical complex with $H^1(X,\mathbb{Z}_2) = 0$ and which some $K \leqslant G$ acts on  with cell stabilisers which are either slender or $\mathcal{H}$--elliptic. 

The \emph{covolume} of a $\mathcal{H}$--complex $X$ is the number of orbits of triangles under the action. Denote this quantity as $\covol(X)$. 

A \emph{$\mathcal{H}$-structure} $\mathcal{K}$ is a finite slender hierarchy for a group $K \leqslant G$ together with a $\mathcal{H}$--complex $X_w$ acted upon by $G_w$ for each terminal vertex $w \in \mathcal{K}$. 

The \emph{covolume} of a $\mathcal{H}$--structure $\mathcal{K}$ is the sum of the covolumes of the complexes associated to its terminal vertices (modulo equivalence.) Denote this quantity as $\covol(\mathcal{K})$.
\end{defn}

In many important cases (such as for hyperbolic groups) we will be able to take all of the cell stabilisers of our $\mathcal{H}$--complexes to be slender. This will streamline a few parts of the argument. Adding $\mathcal{H}$--elliptic cell stabilisers is necessary if we wish to consider certain other applications such as relatively hyperbolic groups.

\section{Main results}

The focus of this paper shall be proving the machinery necessary for showing the following fact to be true.

\begin{theorem}\cite[Corollary 2.7]{LouderTouikan_strong}\thlabel{res:goal}
Let $G$ be a hyperbolic group which is virtually $2$--torsion-free. Then any JSJ-hierarchy for $G$ is finite.
\end{theorem}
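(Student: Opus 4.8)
The plan is to reduce the statement to \thref{res:JSJbound}. Fix an arbitrary JSJ-hierarchy $\mathcal{H}$ for $G$; such hierarchies exist because hyperbolic groups are finitely presented and a splitting of a finitely presented group over finitely presented (in particular slender) edge groups has finitely presented vertex groups, so \thref{res:JSJ_exist} applies at every level. By \thref{res:JSJbound} it suffices to exhibit a \emph{finite} slender hierarchy $\mathcal{K}$ for $G$ all of whose terminal vertex groups are either slender or $\mathcal{H}$-elliptic; then $\depth(\mathcal{H}) \leq \depth(\mathcal{K}) + 1$ and $\mathcal{H}$ is finite. We will build such a $\mathcal{K}$ by starting from a trivial $\mathcal{H}$-structure and repeatedly reducing its complexity.

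For the starting point, take the one-vertex hierarchy with root group $G$, equipped with the $2$-skeleton $X = P_d(G)^{(2)}$ of a Rips complex of $G$ for $d$ large. Since $P_d(G)$ is contractible and $H^1$ of a complex depends only on its $2$-skeleton, $H^1(X;\mathbb{Z}_2) = 0$; the action of $G$ is cocompact with finite, hence slender, cell stabilisers; so this is an $\mathcal{H}$-structure of finite covolume. This is also where virtual $2$-torsion-freeness enters: after passing to a finite-index $2$-torsion-free subgroup the tracks we cut along below are two-sided and genuinely separating, and finiteness transfers back to $G$ because a finite-index inclusion preserves the depth of a hierarchy and controls the number of orbits of vertices.

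The core of the argument, carried out in the later sections, is a complexity-reduction step. Suppose $\mathcal{K}$ is an $\mathcal{H}$-structure possessing a terminal vertex $w$ whose group $G_w$ is neither slender nor $\mathcal{H}$-elliptic. As observed just before \thref{res:JSJbound}, a non-slender subgroup that fails to be $\mathcal{H}$-elliptic must act non-elliptically on the tree $T_v$ of some vertex $v$ of $\mathcal{H}$ that contains it, so $G_w$ has a non-trivial slender splitting. One realises (part of) this splitting by a minimal pattern of tracks in the $\mathcal{H}$-complex $X_w$, cuts $X_w$ along it, extends $\mathcal{K}$ by one level below $w$, and distributes the components of the cut complex to the new terminal vertices. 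Vanishing of $H^1(X_w;\mathbb{Z}_2)$ and minimality of the pattern ensure that the pieces are again $\mathcal{H}$-complexes and that the resulting $\mathcal{H}$-structure $\mathcal{K}'$ has strictly smaller covolume (after, if necessary, also arranging a secondary complexity to drop). Since covolume is a non-negative integer, iterating this terminates.

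When the process halts, every terminal vertex group of the underlying hierarchy is slender or $\mathcal{H}$-elliptic; the hierarchy is slender and finite, having been obtained from a finite hierarchy by finitely many refinements, each of which adds only finitely many orbits of vertices. Taking this as $\mathcal{K}$ and invoking \thref{res:JSJbound} completes the proof. The main obstacle — and what occupies the bulk of the paper — is this complexity-reduction step: choosing the splitting of $G_w$ so that it can be realised by tracks in $X_w$ without producing new non-slender, non-$\mathcal{H}$-elliptic pieces, and verifying that the covolume genuinely decreases; this is precisely where the JSJ-hierarchy $\mathcal{H}$, the geometry of the $\mathcal{H}$-complexes, and the $2$-torsion-free hypothesis must be combined.
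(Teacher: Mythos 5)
Your reduction to \thref{res:JSJbound} is the right frame, and the Rips complex is a fine starting $\mathcal{H}$--structure. But the heart of your argument --- the claim that cutting a terminal complex $X_w$ along a track pattern realising a splitting of a non-slender, non-$\mathcal{H}$--elliptic $G_w$ yields an $\mathcal{H}$--structure of \emph{strictly} smaller covolume, so that the process terminates because covolume is a non-negative integer --- is a genuine gap, and is essentially the Delzant--Potyagailo strategy that the introduction flags as containing a fatal error. Tracks pass through triangles without destroying them, so collapsing them only gives $\covol(X_T) \leq \covol(X)$, and \thref{res:passdown_full} likewise yields only a non-strict inequality; no ``secondary complexity'' is known to break ties. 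The actual argument never iterates ``split a bad terminal vertex'' in isolation: it pushes the $\mathcal{H}$--structure down the \emph{given} hierarchy $\mathcal{H}$ level by level via \thref{res:passdown_full}, waits for the non-increasing covolume to stabilise at some level $N_{\Delta}$, and then uses the resulting bijections on triangles together with the ascending chain condition (supplied for hyperbolic groups by the Tits alternative) to show that the stable-pair equivalence classes eventually yield trees (\thref{res:components_eventually_stable}, \thref{res:cone_criterion}, \thref{res:pair_pullback}). Your proposal never invokes the ACC, which is indispensable: without it the termination you assert simply does not follow. Note also that the output of \thref{res:main} is a finite hierarchy $\mathcal{K}_v$ for each $G_v$ of depth at least $N$, not a single $\mathcal{K}$ for $G$; one then applies \thref{res:JSJbound} to the subhierarchies below depth $N$ to get $\depth(\mathcal{H}) \leq N + C + 1$.

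Two smaller but real issues. First, virtual $2$--torsion-freeness is not about two-sidedness of tracks: its role is to exclude $D_{\infty}$--quotients over slender subgroups, i.e.\ to guarantee that the hierarchy is \emph{linear}, which is a hypothesis of \thref{res:main} and of \thref{res:passdown_full} (a dihedral edge stabiliser would force $2$--torsion in a hyperbolic group). Second, the passage from a finite-index $2$--torsion-free subgroup $G_0$ back to $G$ is not automatic, because a JSJ-hierarchy of $G$ does not restrict to one of $G_0$; the paper must compare the two level by level using the Stallings--Dunwoody splitting together with the quasi-isometry-invariant Bowditch JSJ to match non-slender vertex groups up to finite index. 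Your one-sentence appeal to ``finite-index inclusions preserve depth'' does not address this.
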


Note in particular that this implies that residually finite hyperbolic groups are strongly accessible. We will do this by proving the main result of Louder and Touikan \cite{LouderTouikan_strong} in full generality, keeping the above goal in mind so as to keep us from getting too lost in the details. Before we can state this result we need a few technical definitions.

\begin{defn}
Let $G$ be a finitely generated group and let $\mathcal{H}$ be a slender hierarchy of $G$. We say that $G$ is \emph{$\mathcal{H}$--almost finitely presented} if it has a $\mathcal{H}$--structure with finite covolume.
\end{defn}


Note that $G$ being (almost) finitely presented means that it acts freely and cocompactly on some $2$--dimensional simplicial complex $X$ with $H^1(X,\mathbb{Z}_2) = 0$. Thus this notion genuinely extends the notion of being (almost) finitely presented.

The following restriction is rather technical and is essentially defined to be the exact condition which causes a step deep in the argument to work.

\begin{defn}
$\mathcal{H}$ satisfies the \emph{ascending chain condition} (henceforth abbreviated to ACC) if every ascending chain of subgroups 
\begin{equation*}
S_1 \leqslant S_2 \leqslant S_3 \leqslant \cdots ,
\end{equation*}
where the following conditions are satisfied, eventually stabilises. (i.e. there exists $N$ such that $S_N = S_i$ for all $i \geq N$.)
\begin{itemize}
\item $S_i \leqslant G_{v_i}$ for some vertex $v_i$ of $\mathcal{H}$. Moreover $S_i$ is a subgroup of a non $\mathcal{H}$-elliptic edge group in the action on $T_{v_i}$.
\item Each $v_{i+1}$ is a descendent (although not necessarily an immediate one) of $v_i$.
\item $S_i$ is $\mathcal{H}$-elliptic.
\end{itemize} 
\end{defn}

This abstract condition is satisfied if an ACC on the slender subgroups of $G$ holds. i.e. it is enough to say that every ascending chain of slender groups of $G$ must stabilise. This is a more restrictive condition, however is much easier to internalise and is satisfied for hyperbolic groups. Indeed, given an ascending chain of slender subgroups $\left\lbrace S_i \right\rbrace$, the Tits alternative \cite[Theorem 37]{EditedGromov} implies that $S_{\infty} = \bigcup_i S_i$ is either finite or virtually $\mathbb{Z}$. Thus every infinite $S_i$ has finite index in $S_{\infty}$ and so the ACC follows for free in this case.

For the argument to work we can only consider elliptic and linear actions. Hyperbolic and parabolic actions are excluded by the fact we are working with slender groups, however we still need to prohibit dihedral actions. The following definition allows us classify when we can do this.

\begin{defn}
A slender hierarchy $\mathcal{H}$ is said to be \emph{linear} if whenever $E$ is an edge group of some $T_v$ with $v \in \mathcal{H}$ we have $E \cap G_w$ acting either elliptically or linearly on $T_w$ for any $w \in \mathcal{H}$. 

A group $G$ is said to admit a \emph{$D_{\infty}$--quotient} (over the class $\mathcal{A}$) if there exists a subgroup $H \leqslant G$ which surjects onto $D_{\infty}$ (with a kernel in $\mathcal{A}$.) Observe that if a group admits no $D_{\infty}$--quotients over slender groups then all its slender hierarchies are linear.
\end{defn}

Suppose we have subgroup $D \cong A *_C B$ of a hyperbolic group $G$ which surjects $D_{\infty}$ with slender kernel $C$. Observe that $D$ is slender, so the Tits alternative \cite[Theorem 37]{EditedGromov} now implies that $D$ is virtually cyclic. Thus we see that $A, B$ and $C$ are all finite and in particular both $A$ and $B$ must contain group elements of order $2$. Thus a $2$--torsion-free hyperbolic group doesn't admit any $D_{\infty}$--quotients over its slender subgroups.

We are finally ready to state the main result in full. Note that $\mathcal{H}$ doesn't need to be a JSJ-hierarchy for the following to hold.

\begin{theorem}\cite[Theorem 2.5]{LouderTouikan_strong}\thlabel{res:main}
Let $G$ be a group and let $\mathcal{H}$ be a linear slender hierarchy for $G$. Suppose that $G$ is finitely presented (relative to $\mathcal{B}$) and that $\mathcal{H}$ satisfies the ACC. Then there exist $N$ and $C$ such that for every vertex $v$ in $\mathcal{H}$ with depth at least $N$ there exists a finite hierarchy $\mathcal{K}_v$ for $G_v$ with $\depth(\mathcal{K}_v) \leq C$. 
\end{theorem}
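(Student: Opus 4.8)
The plan is to equip every vertex group $G_v$ of sufficiently large depth with an $\mathcal{H}$--structure whose complexity is bounded independently of $v$, and then to convert such a bounded $\mathcal{H}$--structure into a finite hierarchy of bounded depth. \textbf{Step 1 (from $\mathcal{H}$--structures to finite hierarchies).} I would first prove: if $K \leqslant G$ carries an $\mathcal{H}$--structure whose underlying finite slender hierarchy has depth $\leq d$ and whose total covolume is $m$, then $K$ admits a finite hierarchy of depth $\leq d + m + 1$, and moreover one whose terminal vertex groups are each slender or $\mathcal{H}$--elliptic. The argument is induction on $m$. When $m = 0$ every $\mathcal{H}$--complex $X_w$ attached to a terminal vertex $w$ is a connected $1$--complex with $H^1(X_w;\mathbb{Z}_2) = 0$, hence a tree, and the action of $G_w$ on it gives a graph-of-groups decomposition with vertex groups slender or $\mathcal{H}$--elliptic; appending these splittings deepens the hierarchy by one. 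When $m \geq 1$, for each terminal $w$ either $X_w$ carries no $G_w$--invariant essential track, in which case $G_w$ is declared terminal, or a Dunwoody-type argument \cite{Dunwoody_fp} (using $H^1(X_w;\mathbb{Z}_2) = 0$, so tracks are two-sided and separating) produces a minimal action of $G_w$ on a tree whose edge stabilisers are track stabilisers and whose vertex groups act on the pieces of $X_w$ cut along the track orbit; since a track carries no triangles and the splitting is essential, the pieces have strictly smaller covolume, and we induct.

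\textbf{Step 2 (passing an $\mathcal{H}$--structure down one level of $\mathcal{H}$).} The core lemma will be: if $K \leqslant G$ has an $\mathcal{H}$--structure $\mathcal{S}$ of covolume $m$ and underlying depth $d$, and $K$ acts minimally on a slender tree $T$ with edge groups in $\mathcal{A}$ (relative to $\mathcal{B}$), then each vertex group $K_u$ of $T$ inherits an $\mathcal{H}$--structure $\mathcal{S}_u$ with $\sum_{[u]} \covol(\mathcal{S}_u) \leq m$ (the sum over $K$--orbits of vertices of $T$), and such that the underlying depth of $\mathcal{S}_u$ exceeds $d$ only when $\covol(\mathcal{S}_u) < m$. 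One realises $T$ by a $K$--invariant pattern of tracks in the $\mathcal{H}$--complexes of $\mathcal{S}$; here the linearity hypothesis is used to rule out dihedral behaviour of edge groups and so to force the track stabilisers to be slender or $\mathcal{H}$--elliptic. Cutting along this pattern, reorganising the pieces into connected complexes (still with $H^1(\,\cdot\,;\mathbb{Z}_2)=0$, by Mayer--Vietoris), and attaching them to the vertex groups of $T$ yields the $\mathcal{S}_u$. Triangles are merely redistributed, never created, so covolume cannot increase; and the underlying finite slender hierarchy acquires new levels only when a cut actually consumes triangles, i.e.\ when covolume strictly drops.

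\textbf{Step 3 (assembling the bound).} Since $G$ is finitely presented relative to $\mathcal{B}$ it is $\mathcal{H}$--almost finitely presented; fix once and for all an $\mathcal{H}$--structure $\mathcal{S}_0$ for $G$ of covolume $m_0$ and underlying depth $d_0$. Given a vertex $v$ of $\mathcal{H}$, iterate Step 2 along the chain of ancestors of $v$, obtaining an $\mathcal{H}$--structure $\mathcal{S}_v$ for $G_v$. Covolume is a non-negative integer that never increases along this chain, so it strictly decreases at most $m_0$ times; hence $\covol(\mathcal{S}_v) \leq m_0$, and the underlying depth of $\mathcal{S}_v$ is bounded by $d_0 + c\,m_0$ for a constant $c$ depending only on the initial data. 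Plugging this into Step 1 produces a finite hierarchy $\mathcal{K}_v$ for $G_v$ with $\depth(\mathcal{K}_v) \leq d_0 + (c+1)m_0 + 1 =: C$, whose terminal vertex groups are slender or $\mathcal{H}$--elliptic (so that $\mathcal{K}_v$ also feeds \thref{res:JSJbound}). One takes $N$ to be whatever threshold depth is needed for the passing-down step to apply cleanly, absorbing the bookkeeping associated with the relative family $\mathcal{B}$.

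\textbf{The main obstacle} is Step 2, and this is precisely where \cite{DelzantPotyagailo_wrong} goes wrong. The delicate points are (i) verifying that the track-cutting stays compatible with the recursion — that track stabilisers remain slender or $\mathcal{H}$--elliptic, for which linearity is indispensable — and (ii) controlling the growth of the auxiliary finite slender hierarchies, namely ensuring they never deepen without a compensating drop in covolume. Making (ii) rigorous is exactly what forces the ACC into the hypotheses: absent it, one could pass down through infinitely many levels of $\mathcal{H}$ at constant covolume while the auxiliary hierarchy deepens at each step, destroying the uniform bound. I expect the technical heart of the proof to be the identification of the subgroups stabilising the relevant tracks, the verification that along a constant-covolume descent they organise into an ascending chain of the precise type the ACC governs (subgroups that are $\mathcal{H}$--elliptic yet sit inside non-$\mathcal{H}$--elliptic edge groups at descending vertices), and the consequent conclusion that the process stabilises.
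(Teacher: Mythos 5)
Your skeleton matches the paper's in outline: start from an $\mathcal{H}$--structure for $G$ of finite covolume, push it down level by level through the trees of $\mathcal{H}$ so that covolume never increases, and invoke the ACC once covolume has stabilised. But the two claims that carry all the weight are asserted rather than proved, and both are false as stated. In Step 1, cutting a complex along an essential track does not strictly decrease the number of orbits of triangles: a track is $1$--dimensional, and every triangle it crosses survives (in pieces, at least one of which is again a triangle after reduction) inside some complementary component, so total covolume is preserved rather than dropped and your induction on $m$ never advances; the bound $\depth \leq d+m+1$ is not established. In Step 2, the assertion that the auxiliary hierarchy of $\mathcal{S}_u$ deepens only when covolume strictly drops is essentially the false step of Delzant--Potyagailo that you yourself flag: the pass-down construction genuinely adds layers (from splitting over cutpoints and from collapsing the tracks of the resolution) even at constant covolume, so the Step 3 bound $\depth(\mathcal{S}_v) \leq d_0 + c\,m_0$, and hence your uniform $C$, does not follow.

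The paper's actual termination mechanism, absent from your proposal, lives entirely in the constant-covolume regime. Once $\covol(\mathcal{T}^n)$ has stabilised one obtains equivariant bijections $\tau_{n,m}$ on triangles; a pair of adjacent triangles is \emph{stable} if it stays adjacent under every $\tau_{n,m}$, and one takes the equivalence relation generated by stable pairs. The ACC is applied not to track stabilisers along a descent but to the strictly ascending chains $\stab^{+}(e_{n_{i_1}}) \lneq \stab^{+}(e_{n_{i_2}}) \lneq \cdots$ of edge stabilisers inside the subcomplexes carrying the equivalence classes, forcing the classes and their stable pairs to stabilise; a separate combinatorial disc argument (the cone criterion) then shows every simple cone lies in a single class, so the bipartite graph of classes is a tree with slender or $\mathcal{H}$--elliptic edge groups. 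Appending that tree as one final layer terminates the auxiliary hierarchy at level $N$, and $C$ is the depth of $\mathcal{K}_v$ at that level plus one --- it depends on $N$ and is not of the form $d_0+(c+1)m_0+1$ (the paper notes no explicit bound on $N$ is available). Your closing paragraph correctly locates the difficulty, but identifying these chains and proving the cone criterion is the proof, not bookkeeping to be absorbed.
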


Once we prove this our goal follows swiftly.

\begin{proof}[Proof of \thref{res:goal}]
First consider the case where $G$ is $2$--torsion-free. Our prior discussions show that in this case the conditions for \thref{res:main} are satisfied and so its conclusions follow. Now \thref{res:JSJbound} tells us that $\mathcal{H}$ is finite with $\depth(\mathcal{H}) \leq N + C + 1$. 

Now suppose $G$ is virtually $2$--torsion-free. Claim that if $G_0$ is a finite index subgroup of $G$ then non-slender vertex groups of a JSJ-tree of $G$ are finite index supergroups of the non-slender vertex groups of a JSJ-tree of $G_0$. From this we see that a JSJ-hierarchy for $G$ has a non-slender group at level $n$ if and only if the same holds for a JSJ-hierarchy of $G_0$; at which point the $2$--torsion-free case implies that we are done. It remains to prove the claim. Guirardel and Levitt \cite[Proposition 4.16]{GuirardelLevitt_JSJ} tell us that we can obtain a JSJ-splitting for $G$ by taking a maximal splitting over finite edge groups and replacing each vertex with a JSJ-splitting for each of the (one-ended) vertex groups. In particular we can take the JSJ-splittings for the one-ended groups to be the canonical Bowditch JSJ-splitting. \cite{Bowditch_canon_JSJ} Now since $G_0 \leqslant_{\text{f.i.}} G$ they act freely and cocompactly on a common geodesic metric space; meaning that if $T$ is a maximal tree over finite edge groups for $G$ then $T_{G_0}$ is the same for $G_0$. (Possibly after reducing away some slender vertices.) Likewise the Bowditch JSJ depends only on the topology of boundary of the underlying group, hence is invariant under quasi-isometries. Thus we can build a JSJ-tree for $G$ whose restriction to $G_0$ (after reducing) is also a JSJ-tree. The claim now follows. $\square$
\end{proof}

The same argument as the above proof of \thref{res:goal} can also be used to prove the following more general statement.

\begin{theorem}\cite[Corollary 2.6]{LouderTouikan_strong}\thlabel{res:extended_goal}
Let $G$ be a group and let $\mathcal{H}$ be a slender JSJ-hierarchy (relative to a class $\mathcal{B}$) for $G$. Suppose that $G$ is $\mathcal{H}$--almost finitely presented and doesn't admit any $D_{\infty}$--quotients. Suppose also that $\mathcal{H}$ satisfies the ACC. Then $\mathcal{H}$ is finite.
\end{theorem}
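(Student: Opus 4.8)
The plan is to reproduce, essentially word for word, the argument used for the $2$--torsion-free case of \thref{res:goal}: first derive the hypotheses of \thref{res:main} from those assumed here, then use \thref{res:main} to obtain uniform bounds deep in $\mathcal{H}$, and finally feed these into \thref{res:JSJbound}.

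The first step is to observe that $\mathcal{H}$ is a \emph{linear} slender hierarchy. Since $G$ admits no $D_{\infty}$--quotients at all it certainly admits none with slender kernel, so the observation recorded just after the definition of a linear hierarchy shows that every slender hierarchy of $G$, and $\mathcal{H}$ in particular, is linear. The second step is to note that the hypothesis of $\mathcal{H}$--almost finite presentation plays exactly the role that finite presentation plays in \thref{res:main}: the $\mathcal{H}$--structure of finite covolume supplied by this hypothesis is precisely the object consumed by the $\mathcal{H}$--complex machinery of the later sections, and I would in fact establish \thref{res:main} under this weaker hypothesis, the statement recorded in the introduction being the special case needed for hyperbolic groups. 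Granting this, together with the ACC hypothesis on $\mathcal{H}$, \thref{res:main} yields constants $N$ and $C$ such that every vertex $v$ of $\mathcal{H}$ with $\depth(v) \geq N$ carries a finite slender hierarchy $\mathcal{K}_v$ for $G_v$ with $\depth(\mathcal{K}_v) \leq C$ whose terminal groups are slender or $\mathcal{H}$--elliptic.

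It then remains to splice these together and invoke \thref{res:JSJbound}. Truncate $\mathcal{H}$ at level $N$ and, at each vertex $v$ of depth $N$, graft on $\mathcal{K}_v$ (extending $G$--equivariantly over the orbit of $v$); this produces a slender hierarchy $\mathcal{K}$ for $G$ of depth at most $N + C$ whose terminal groups are those of the $\mathcal{K}_v$, hence slender or $\mathcal{H}$--elliptic. This $\mathcal{K}$ is finite: the JSJ-trees occurring in the first $N$ levels of $\mathcal{H}$ are cocompact, hence have finite quotient graphs of groups (cf.\ \thref{res:JSJ_exist} and accessibility), so there are finitely many $G$--orbits of vertices up to level $N$, while the $\mathcal{K}_v$ are finitely many in number and each finite. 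Applying \thref{res:JSJbound} to the pair $(\mathcal{H},\mathcal{K})$ then gives that $\mathcal{H}$ is finite, with $\depth(\mathcal{H}) \leq \depth(\mathcal{K}) + 1 \leq N + C + 1$, as required.

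Once \thref{res:main} is available the deduction above is pure bookkeeping, so the expected main obstacle is entirely contained in the proof of \thref{res:main} itself. Within the present argument the only point needing genuine care is the interface with \thref{res:JSJbound}: the finite hierarchies produced by \thref{res:main} must be known to have terminal groups that are slender or $\mathcal{H}$--elliptic, not merely to be abstract finite hierarchies, so that the depth comparison is legitimate; I should therefore make sure this is visibly part of what \thref{res:main} delivers. (A minor secondary point is the cocompactness of the top part of $\mathcal{H}$, used only to see that $\mathcal{K}$ is finite.)
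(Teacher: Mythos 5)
Your proposal is correct and follows essentially the same route as the paper, which simply reruns the proof of \thref{res:goal}: check linearity via the absence of $D_{\infty}$--quotients, apply \thref{res:main}, and conclude with \thref{res:JSJbound}. Your explicit remarks that the proof of \thref{res:main} only ever uses the $\mathcal{H}$--structure of finite covolume (so the stated finite-presentation hypothesis can be weakened to $\mathcal{H}$--almost finite presentation) and that the $\mathcal{K}_v$ must be seen to have slender or $\mathcal{H}$--elliptic terminal groups are points the paper leaves implicit, and you are right to flag them.
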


\begin{remark}
There is a separate notion of a subgroup $K \leqslant G$ being slender relative so a set of subgroups $\mathcal{B}$; satisfied if $K$ fixes a point or a line on any tree $T$ which $G$ acts on where each group in $\mathcal{B}$ fixes a vertex of $T$. We do not consider such subgroups here and bring them up only to clear up any confusion between them and the notion of a slender JSJ relative to $\mathcal{B}$.
\end{remark}

\begin{remark}
While it is possible to extract an explicit value for $C$ in terms of $G$ from the upcoming proof of \thref{res:main}, (assuming that $\mathcal{H}$ is a JSJ-hierarchy,) there is no immediately obvious way to do the same for $N$. Thus this argument does not give an explicit bound on the height of $\mathcal{H}$.
\end{remark}

\section{A Note on Canonicalness of JSJ-hierarchies}

JSJ-trees are not in general unique, however they share a common deformation space. Thus every vertex group of a JSJ-tree is either slender or is a vertex group in any other JSJ-tree for the same group \cite[pg.6]{GuirardelLevitt_JSJ}. Thus we see that most of the resulting hierarchy is identical regardless of our choices of JSJ-trees. In particular this shows that the depth of a JSJ-hierarchy is constant for a given group.

If one still feels the need to consider a more canonical object then we can define one as follows. For one ended hyperbolic groups there is a canonical JSJ-tree, called the Bowditch JSJ-tree, which is invariant under outer automorphisms. (See \cite{Bowditch_canon_JSJ} or \cite{GuirardelLevitt_cylinders} for details.) Moreover we can use Dunwoody accessibility \cite{Dunwoody_fp} (which are exactly JSJ-splittings over finite groups) to split multi-ended groups into one-ended ones and the resulting vertex groups to not depend on the exact choice of tree. Alternating between these we can thus get a hierarchy which is defined in an essentially canonical way; meaning that the vertices and their associated groups of the hierarchy are always the same. One can trivially modify the previous proof of \thref{res:goal} to work for such hierarchies, to see that these hierarchies are also finite given that the underlying group is virtually without $2$--torsion.


\section{Passing Hierarchies to Subgroups}\label{sec:pass_down}

The key to proving \thref{res:main} will be to successively pass some `bad' auxiliary hierarchies from one level of $\mathcal{H}$ to the next until eventually they become `good'. We will thus begin by detailing a method for splitting a hierarchy over an unrelated tree.

\begin{lemma}\thlabel{res:passdown_hierarchy}
Let $T$ be a tree which a group $G$ acts on with slender edge stabilisers and let $\mathcal{K}$ be a finite slender hierarchy for $G$. Then for each vertex $v \in T$ there is a finite slender hierarchy $\mathcal{K}_v$ for the vertex group $G_v$ with the following properties.
\begin{enumerate}
\item \label{pt:depth} $\depth(\mathcal{K}_v) \leq \depth(\mathcal{K})$ for any $v \in T$.
\item \label{pt:subgroup} For each vertex $w \in \mathcal{K}_v$ the group $G_w$ is a subgroup of some $G_{u}$ where $u \in \mathcal{K}$ and $\depth(w) \leq \depth(u)$.
\item \label{pt:JSJ} If $\mathcal{K}$ is non-trivial and $T$ is a JSJ-tree then $\depth(\mathcal{K}_v) < \depth(\mathcal{K})$ whenever $v$ is a rigid vertex of $T$.
\item \label{pt:equal} Suppose that the terminal vertices of $\mathcal{K}$ have associated groups which are either slender or act elliptically in $T$. Then the terminal vertices of $\mathcal{K}_v$ have associated groups which are either slender or equal to a terminal group of $\mathcal{K}$.
\end{enumerate}
\end{lemma}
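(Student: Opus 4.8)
The plan is to build $\mathcal{K}_v$ by induction on $\depth(\mathcal{K})$, intersecting the hierarchy $\mathcal{K}$ with the vertex group $G_v$. The base case is when $\mathcal{K}$ is trivial (a single vertex), in which case $G_v$ itself is slender or we take $\mathcal{K}_v$ trivial and there is nothing to check. For the inductive step, let $T_G$ denote the JSJ-tree (or general slender tree) at the root of $\mathcal{K}$, so $G$ acts on $T_G$ with slender edge stabilisers, and for each vertex $x \in T_G$ we have a sub-hierarchy $\mathcal{K}_x$ of $\mathcal{K}$ for $G_x$ of depth $\depth(\mathcal{K})-1$. Now $G_v$ acts on $T_G$ as well (by restriction); since edge stabilisers of $T_G$ intersected with $G_v$ are still slender, this is a valid action. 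Replace $T_G$ by a minimal $G_v$-invariant subtree (or a point, if $G_v$ is elliptic in $T_G$), call it $T_v^{\mathrm{top}}$. The vertices of $T_v^{\mathrm{top}}$ have stabilisers of the form $G_v \cap G_x$ for various $x \in T_G$, and by induction applied to the finite slender hierarchy $\mathcal{K}_x$ for $G_x$ and the subgroup $G_v \cap G_x \leqslant G_x$ — wait, the induction hypothesis as stated passes from $G$ to its vertex groups $G_x$, so I instead apply it to the action of $G_v \cap G_x$ on the $\mathcal{K}_x$-tree, noting $G_v \cap G_x$ is a subgroup of $G_x$ and $\mathcal{K}_x$ is a hierarchy for $G_x$; one needs a slightly more general inductive statement allowing an arbitrary subgroup, but this is routine since a subgroup acting on a tree for the ambient group still acts with slender edge stabilisers. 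This yields finite slender hierarchies for each $G_v \cap G_x$ of depth $\leq \depth(\mathcal{K}_x) = \depth(\mathcal{K}) - 1$. Stacking $T_v^{\mathrm{top}}$ on top of these gives $\mathcal{K}_v$ of depth $\leq \depth(\mathcal{K})$, proving (\ref{pt:depth}).

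For (\ref{pt:subgroup}): by construction every vertex group appearing in $\mathcal{K}_v$ at depth $d$ is an intersection $G_v \cap G_u$ for some $u \in \mathcal{K}$ with $\depth(u) = d$ (tracking through the levels of the construction), hence a subgroup of $G_u$ with $\depth(w) = \depth(u)$; a fortiori $\depth(w) \leq \depth(u)$. For (\ref{pt:equal}): the terminal groups of $\mathcal{K}_v$ are of the form $G_v \cap G_u$ where $G_u$ is a terminal group of $\mathcal{K}$, hence slender or (by hypothesis) elliptic in $T_G$ and in all the intermediate trees; but if $G_u$ acts elliptically on $T_G$ then $G_v$ either fails to contain a conjugate of it — in which case $G_v \cap G_u$ is a proper subgroup and one argues it is slender or recurses — or $G_v \supseteq G_u^g$ for some $g$, and then $G_u$ passes through the construction unchanged down to a terminal vertex of $\mathcal{K}_v$, so $G_v \cap G_u = G_u$ is a terminal group of $\mathcal{K}$. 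Some care is needed here: "elliptic in $T$" must be interpreted as elliptic in every tree appearing in the construction, which is why the hypothesis in (\ref{pt:equal}) refers to $T$ itself; I expect this clause to require the careful bookkeeping but no new idea. Finally (\ref{pt:JSJ}): when $T$ is a JSJ-tree and $v$ is rigid, $G_v$ is elliptic in every tree in $\mathrm{S}_{\mathcal{A},\mathcal{B}}$, in particular in $T_G$ (the JSJ-tree at the root of $\mathcal{K}$), so $T_v^{\mathrm{top}}$ is a point and the whole hierarchy $\mathcal{K}_v$ is built entirely from the next level down, giving $\depth(\mathcal{K}_v) \leq \depth(\mathcal{K}) - 1 < \depth(\mathcal{K})$; here one must know that rigidity of $v$ in the root tree $T$ is enough, which uses that $T$ and $T_G$ are the same tree (the root of $\mathcal{K}$) — but actually $T$ in the lemma statement \emph{is} assumed to be a JSJ-tree while $\mathcal{K}$ is an arbitrary finite slender hierarchy, so its root tree need not be a JSJ-tree; I should instead run the induction so that the tree $T$ is the "outside" tree and $\mathcal{K}$ stays fixed, i.e. induct on $\depth(\mathcal{K})$ with $T$ playing the role of the top piece of $\mathcal{K}_v$.

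Let me restate the mechanism cleanly: induct on $\depth(\mathcal{K})$. Let $S$ be the root tree of $\mathcal{K}$ with vertex groups $G_x$ and sub-hierarchies $\mathcal{K}_x$. Form the (minimal, or trivial) action of $G_v$ on $T$, with vertices having stabilisers $G_v^{(y)}$ for $y$ a vertex of $T/G_v$. For each such $y$, the group $G_v^{(y)}$ is a subgroup of $G$ carrying the finite slender hierarchy obtained by restricting $\mathcal{K}$ (this restriction of a hierarchy to a subgroup is itself the content of a degenerate case of the lemma with $T$ a point, or can be taken as the obvious operation); iterating the lemma one level at a time on the sub-hierarchies $\mathcal{K}_x$ intersected down, we get for $G_v^{(y)}$ a hierarchy of depth $\leq \depth(\mathcal{K})-1$. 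Then $\mathcal{K}_v$ has $T$ on top and these below. The main obstacle, as I see it, is purely organisational: setting up the induction so that the statement is strong enough to feed itself (it must apply to an arbitrary subgroup $H \leqslant G$ acting on $T$, not just to vertex groups of $T$), and propagating the four numbered properties — especially (\ref{pt:equal}), where one must distinguish whether a terminal group of $\mathcal{K}$ survives whole into $\mathcal{K}_v$ or gets cut down, and in the latter case invoke that a proper "slice" of an elliptic-or-slender group is again slender (using that $\mathcal{A}$ is closed under subgroups). No hard geometry is involved; the work is in the careful inductive bookkeeping.
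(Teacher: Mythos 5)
Your construction is essentially the paper's: $\mathcal{K}_v$ is built by letting $G_v$, and recursively the resulting vertex groups, act on minimal invariant subtrees of the trees of $\mathcal{K}$, collapsing the levels on which the group acts elliptically. Parts (\ref{pt:depth})--(\ref{pt:JSJ}) go through as you describe (the paper runs this as an iterative construction maintaining invariant (\ref{pt:subgroup}) rather than as an induction on $\depth(\mathcal{K})$; note that because of the collapsed levels one only gets $\depth(w)\leq\depth(u)$, not the equality you assert, but since the stated property is an inequality this is harmless).

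The genuine gap is in part (\ref{pt:equal}). In the case where $G_v$ does not contain a conjugate of the terminal group $G_u$, you justify slenderness of the intersection by appealing to the principle that ``a proper slice of an elliptic-or-slender group is again slender, using that $\mathcal{A}$ is closed under subgroups.'' No such principle holds: a proper subgroup of a non-slender elliptic group need not be slender, and closure of $\mathcal{A}$ under subgroups is irrelevant here. The correct mechanism is specific to the tree $T$ of the lemma: if the non-slender $G_u$ fixes a vertex $v'$ of $T$ with $v'\neq v$, then $G_v\cap G_{v'}$ fixes the whole segment $[v,v']$ and hence lies in an edge stabiliser of $T$, which is slender by hypothesis; so a non-slender terminal group $G_w\leqslant G_v$ of $\mathcal{K}_v$ forces $v'=v$ and therefore $G_u\leqslant G_v$. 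Your other case also omits the needed argument: to upgrade $G_w\leqslant G_u$ to $G_w=G_u$ one uses that a non-slender subgroup lies in at most one vertex group at each level of a slender hierarchy (two distinct vertex groups intersect inside a slender edge group), so the $\mathcal{K}_v$-elliptic group $G_u$ and $G_w$ occupy the same chain of vertices of $\mathcal{K}_v$ and hence coincide. Finally, the reinterpretation of ``elliptic in $T$'' as ``elliptic in every intermediate tree'' that you propose is neither available from the hypothesis nor needed: a terminal group of $\mathcal{K}$ is automatically elliptic in every tree of $\mathcal{K}$ above it, and the only extra input required is ellipticity in the external tree $T$.
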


Later we will take $T$ to be a tree in the hierarchy $\mathcal{H}$ and $\mathcal{K}$ to be one of the aforementioned auxiliary hierarchies. Louder and Touikan use the symmetric core of a product of trees to produce the $\mathcal{K}_v$; however this is not necessary as a more direct approach also works.

\begin{proof}
For each $v \in T$ we define $\mathcal{K}_v$ with property~\ref{pt:subgroup} listed above one level at a time. By definition the initial vertex of $\mathcal{K}_v$ has associated group $G_v$ which trivially satisfies the required property. So we just need a procedure to generate the action on a tree for a given vertex of $\mathcal{K}_v$.

Take a vertex $w \in \mathcal{K}_v$ with associated group $G_w$. We are given that $G_w \leqslant G_{u}$ where $u \in \mathcal{K}$ and $\depth(w) \leq \depth(u)$. If $u$ is a terminal vertex then we take $T_w$ to be a point and so $w$ is also terminal. Otherwise WLOG we may assume that $G_w$ is non-elliptic in $T_{u}$ by passing to vertex groups. We now just take $T_w$ to be the unique minimal subtree of $T_{u}$ which is invariant under $G_w$. By definition the edge groups of $T_{w}$ are slender and the vertex groups of $T_{w}$ are subgroups of the vertex groups of $T_G$. The latter implies property~\ref{pt:subgroup} by induction, thus completing the construction.

We now prove the remaining properties. Property~\ref{pt:depth} is just a weaker version of property~\ref{pt:subgroup} and so is immediately satisfied. Property~\ref{pt:JSJ} holds because each rigid vertex of $T$ is by definition elliptic in the top level of $\mathcal{K}$ and so in fact for every $w \in \mathcal{K}_v$ we have $u \in \mathcal{K}$ with $G_w \leqslant G_{u}$ and $\depth(w) < \depth(u)$.

For property~\ref{pt:equal} consider a terminal vertex $w \in \mathcal{K}_v$ where $G_w$ is not slender. As usual consider a vertex $u \in \mathcal{K}$ with $G_w \leqslant G_u$ and observe that we can take $u$ to be a terminal vertex of $\mathcal{K}$. Suppose that $G_u$ fixes a line of $T$. Then $G_u \cup G_v$ either is or has an index 2 subgroup which is contained in an edge stabiliser of $T$. Hence $G_w \leqslant G_u \cap G_v$ is slender which contradicts our assumption. Thus $G_u$ fixes some vertex $v' \in T$ and so we have $G_u \leqslant G_{v'}$. Since the edges of $T$ have slender stabiliser, $G_w \leqslant G_v \cap G_{v'}$ and $G_w$ is not slender we must have $v = v'$. Since $G_u$ is $\mathcal{K}$--elliptic and $G_u \leqslant G_v$ we see that $G_u$ is also $\mathcal{K}_v$--elliptic from the construction of $\mathcal{K}_v$. Recall that since neither $G_u$ nor $G_w$ are slender they are both contained in exactly one vertex for each level of $\mathcal{K}_v$. Moreover since $G_w \leqslant G_u$ they must both be contained in the same vertices. Hence $G_u \leqslant G_w$ by the definition of $G_w$ and so $G_u = G_w$. $\square$
\end{proof}

\begin{remark}\thlabel{res:single_pushforward}
Essentially the same argument as the last part shows that if $\left\lbrace w_i \right\rbrace$ is a collection of distinct terminal vertices of $\coprod_{v} \mathcal{K}_v$ with $G_{w_i} \leqslant G_u$ where $u \in \mathcal{K}$ is a terminal vertex; then at most one of the $w_i$ can be non-slender and $G_{w_i} = G_u$ for this $i$. This will be important shortly when extending the construction to $\mathcal{H}$--structures.
\end{remark}

We now have all the tools needed to prove \thref{res:JSJbound}.

\begin{proof}[Proof of \thref{res:JSJbound}] 
We prove by induction on $\depth{\mathcal{K}}$. If $\depth{\mathcal{K}} = 0$ then $\mathcal{K}$ is trivial and so $G$ is either slender or $\mathcal{H}$--elliptic; hence it cannot split in $\mathcal{H}$ and so $\mathcal{H}$ is trivial. Otherwise consider each vertex $v \in \mathcal{H}^1$ in turn. Let $\mathcal{H}_v$ be the subhierarchy of $\mathcal{H}$ with initial vertex $v$. If $G_v$ is a rigid group in the action on the tree corresponding to the initial vertex of $\mathcal{H}$ then \thref{res:passdown_hierarchy} implies that we have another hierarchy $\mathcal{K}_v$ for $G_v$ with $\depth(\mathcal{K}_v) < \depth(\mathcal{K})$ and with terminal vertices which have associated groups that are either slender or $\mathcal{H}$--elliptic. Thus $\depth(\mathcal{H}_v) \leq \depth{\mathcal{K}}$ by induction. If $G_v$ is a flexible group in the action of top level of $\mathcal{H}$ then \thref{res:flexiblegroups} implies that $G_v$ is slender by orbifold; which implies that $\depth(\mathcal{H}_v) \leq 1 \leq \depth(\mathcal{K})$. Thus in any case we have $\depth(\mathcal{H}_v) \leq \depth(\mathcal{K})$ for all $v \in \mathcal{H}^1$ and hence $\depth(\mathcal{H}) \leq \depth(\mathcal{K}) + 1$. $\square$
\end{proof}

We will measure how `bad' our auxiliary hierarchies are by introducing some actions on some complexes. The following lemma shows us that these actions pass down nicely if the terminal vertices of our initial auxiliary hierarchy are elliptic in our tree.

\begin{lemma}\thlabel{res:passdown_complex}
Let $\mathcal{K}$ be a $\mathcal{H}$--structure for $K \leqslant G$. Suppose $K$ acts on a tree $T$ with slender edge stabilisers. Suppose that the terminal vertices of $\mathcal{K}$ are either slender or elliptic in $T$. Then for each vertex $v \in T$ we get that $\mathcal{K}_{v}$ (as defined in \thref{res:passdown_hierarchy}) naturally inherits a $\mathcal{H}$--structure from $\mathcal{K}$ and moreover we have
\begin{equation*}
\sum\limits_{i} \covol(\mathcal{K}_{v_i}) = \covol(\mathcal{K})
\end{equation*}
where $\left\lbrace v_i \right\rbrace$ is a set of representatives for the orbits of vertices in $T$.
\end{lemma}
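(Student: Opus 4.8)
The plan is to build the $\mathcal{H}$--structure on each $\mathcal{K}_v$ by transporting, for each terminal vertex $w \in \mathcal{K}_v$, the complex associated to the terminal vertex $u \in \mathcal{K}$ that $w$ sits below. Recall from \thref{res:passdown_hierarchy} that every terminal $w \in \mathcal{K}_v$ comes with a terminal $u \in \mathcal{K}$ such that $G_w \leqslant G_u$. First I would handle the two cases of property~\ref{pt:equal} of \thref{res:passdown_hierarchy}: if $G_w$ is slender there is nothing to attach (slender terminal vertices carry no complex), while if $G_w$ is not slender then the construction forces $G_w = G_u$, so we may simply let $X_w := X_u$ with the same $G_u = G_w$-action. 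This immediately produces a $\mathcal{H}$--structure on $\mathcal{K}_v$: its terminal complexes are a sub-collection of those of $\mathcal{K}$, each $H^1(\cdot,\mathbb{Z}_2)=0$ condition and cell-stabiliser condition is inherited verbatim.

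The covolume identity is then a bookkeeping statement: $\covol(\mathcal{K}_v)$ is the sum of $\covol(X_u)$ over those terminal non-slender $u \in \mathcal{K}$ that appear below some terminal vertex of $\mathcal{K}_v$, and summing over a set $\{v_i\}$ of orbit representatives of vertices of $T$ we must recover each $\covol(X_u)$ exactly once. The key input is \thref{res:single_pushforward}: for a fixed terminal non-slender $u \in \mathcal{K}$, among all terminal vertices of $\coprod_v \mathcal{K}_v$ lying below $u$, at most one is non-slender, and for it $G_{w} = G_u$. So I would argue as follows. Since $G_u$ is not slender and $\mathcal{K}$ has its terminal vertices slender-or-elliptic-in-$T$, the group $G_u$ is elliptic in $T$, hence fixes a unique vertex of $T$ (uniqueness because an edge stabiliser containing the non-slender $G_u$ is impossible) — call its orbit representative $v_i$. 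Running the construction of $\mathcal{K}_{v_i}$, the argument in the last paragraph of the proof of \thref{res:passdown_hierarchy} shows $G_u$ descends to a genuine terminal vertex $w$ of $\mathcal{K}_{v_i}$ with $G_w = G_u$, so $X_u$ is attached to $\mathcal{K}_{v_i}$ and contributes $\covol(X_u)$; and by \thref{res:single_pushforward} it is attached nowhere else. Conversely every non-slender terminal complex of every $\mathcal{K}_{v_i}$ arises this way from a unique $u$. Matching the two sides term by term gives $\sum_i \covol(\mathcal{K}_{v_i}) = \covol(\mathcal{K})$.

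The main obstacle is the careful orbit bookkeeping: one must be sure that passing from the $G$-equivariant picture (all vertices of $T$, all of $\mathcal{K}$) down to orbit representatives does not double-count or drop a complex, and in particular that the choice of orbit representative $v_i$ for the fixed vertex of $G_u$ is consistent with how $\mathcal{K}_{v_i}$ was constructed up to the conjugation-invariance built into the definition of a hierarchy. This is exactly what \thref{res:single_pushforward} is designed to control, so the proof is essentially an application of that remark together with the ellipticity/uniqueness observation; I expect no genuinely new difficulty beyond stating it cleanly. A secondary point to check is that slender terminal vertices of $\mathcal{K}_v$ — which may be numerous, e.g. produced by splitting a non-slender $G_u$ inside its minimal subtree — contribute zero covolume, so they are harmless; this is immediate from the definition of $\covol$ of a $\mathcal{H}$--structure.
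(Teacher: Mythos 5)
Your proposal is correct and follows essentially the same route as the paper: attach $X_u$ to the unique non-slender terminal vertex $w$ with $G_w = G_u$ (a point otherwise), then use the ellipticity of $G_u$ in $T$ for existence of such a $w$ and \thref{res:single_pushforward} for uniqueness across orbits, yielding the equivariant bijection of non-slender terminal vertices that gives the covolume identity.
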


\begin{proof}
Recall from \thref{res:passdown_hierarchy} that for every terminal vertex $w \in \mathcal{K}_v$ that $K_w$ is either slender or equal to $K_{u}$ for some terminal vertex $u \in \mathcal{K}$. Thus $\mathcal{K}_v$ naturally inherits a $\mathcal{H}$--structure by letting $K_w$ act trivially on a point if it's slender or on the same complex as $K_{u}$ otherwise. 

It remains to prove the equality of covolumes. Let $u$ be a terminal vertex of $\mathcal{K}$ and let $\left\lbrace w_j \right\rbrace$ be a set of representatives for the terminal vertices of $\coprod_i \mathcal{K}_{v_i}$ which have $K_{w_j}$ conjugating into $K_u$. Observe that for each $w_j \in \mathcal{K}_{v_{i(j)}}$ we have a corresponding $w'_j \in \mathcal{K}_{g_jv_{j(i)}}$ with $(K_{w_j})^{g_j} = K_{w'_j} \leqslant K_u$. These $w'_j$ are distinct as the $w_j$ are in distinct orbits. Hence by \thref{res:single_pushforward} at most one of the $K_{w_j}$ can be non slender and $K_{w'_j} = K_u$ for this $j$. Such a $j$ must exist as $K_u$ is contained in some vertex group $G_{v'} \leq G$ and moreover is $\mathcal{K}_{v'}$--elliptic. Thus there is a natural $G$--equivariant one to one correspondence between the non-slender terminal vertices of $\mathcal{K}$ and $\coprod_{v \in T}\mathcal{K}_v$ which implies the result. $\square$
\end{proof}

Of course in general it won't be the case that the terminal vertices of $\mathcal{K}$ will be elliptic in $T$. So our next goal shall now be to add additional layers to $\mathcal{K}$ so this becomes true while not increasing the covolume. We thus require some sort of resolution in order to nicely split up our complexes. If every cell of $X$ acted elliptically on $T$ then we could get this from an equivariant map $X \rightarrow T$. Since the cells can also act linearly on $T$ such a map needn't exist and so we need to be more careful. We will modify $X$ in order to isolate these bad points.

After making modifications to the complexes we may find that they fail to be simplical. For example if we collapse one edge of a triangle we are left with bigon. We will get around this by \emph{reducing} complexes as follows. Let $X$ be a $2$--dimensional cell complex where all the $2$--cells are either triangles or bigons. Start defining a simplical complex $X'$ by letting the its vertex set be the same as $X$. Let $[u,v]$ be an edge of $X'$ if there is an edge between $u$ and $v$ in $X$ and similarly let $[u,v,w]$ be a triangle in $X'$ if there is a triangle in $X$ with vertices $u$, $v$ and $w$. This $X'$ is the reduction of $X$. Note that if $X$ is connected with $H^1(X,\mathbb{Z}_2) = 0$ then the same holds for $X'$.


In order to split the complex we will make use of the following Dunwoody-Delzant-Potyagailo resolution.

\begin{lemma}\thlabel{res:Dun_Del_Pot_res}\cite[Lemma 3.5]{LouderTouikan_strong}
Let $G$ be a group acting on a triangle complex $X$ and a tree $T$. Suppose that 
\begin{itemize}
\item the cell stabilisers of $X$ fix a point of $\hat{T} := T \: \cup \: \partial T$. (Where $\partial T$ is the \emph{Gromov boundary} of $T$.) i.e. they all act either elliptically, linearly or parabolically on $T$.
\item if $W \subset X^1$ is a connected subcomplex where the stabiliser of each edge acts either linearly or parabolically on $T$ then the stabiliser of $W$ fixes a point on $\partial T$.
\end{itemize}
Then there is a resolution $\rho: X \rightarrow \hat{T}$.
\end{lemma}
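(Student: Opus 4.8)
The plan is to build $\rho$ skeleton by skeleton and $G$--equivariantly, using the first hypothesis to place the vertices and to extend over cells whose stabiliser acts elliptically on $T$, and using the second hypothesis to collapse the locus of $X$ on which the $T$--action is only linear or parabolic. Call an edge $e$ of $X$ \emph{elliptic} if $G_e$ fixes a point of $T$ and \emph{non--elliptic} otherwise; by the first hypothesis a non--elliptic edge has $G_e$ acting linearly or parabolically. I would let $Y\subseteq X^1$ be the $G$--invariant subcomplex spanned by the non--elliptic edges. Each connected component $W$ of $Y$ is a connected subcomplex of $X^1$ all of whose edge stabilisers act linearly or parabolically, so the second hypothesis supplies a point $\xi_W\in\partial T$ fixed by $\mathrm{Stab}(W)$; choosing one representative per $G$--orbit of components and propagating by the action gives a $G$--equivariant assignment $W\mapsto\xi_W$, consistent exactly because $\mathrm{Stab}(W)$ fixes $\xi_W$. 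I would then set $\rho(v):=\xi_W$ for every vertex $v$ lying in a component $W$ of $Y$; for the remaining vertices every incident edge is elliptic, $G_v$ fixes a point of $\hat T$ by the first hypothesis, and I would pick such a point equivariantly over the remaining orbits and call it $\rho(v)$.

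Next I would extend over $1$--cells. If $e=[a,b]\subseteq Y$ then $a$ and $b$ lie in a common component and $\rho(a)=\rho(b)$, so $e$ is sent to that single point; if $e$ is elliptic, $e$ is mapped onto the geodesic $[\rho(a),\rho(b)]$ of $\hat T$, chosen equivariantly over the orbit of $e$. Such an equivariant choice exists because of the elementary fact that \emph{the fixed--point set in $\hat T$ of an isometry of $T$ that fixes a point of $T$ is convex}: if $g$ fixes $p\in T$ and fixes $x\in\hat T$ then $g$ fixes the segment or ray $[p,x]$ pointwise, so $g$ fixes the geodesic between any two of its fixed points; applied to $G_e$, which is elliptic and fixes $\rho(a)$ and $\rho(b)$, it gives that $G_e$ acts trivially on $[\rho(a),\rho(b)]$. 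This is exactly the step that obliges us to collapse $Y$: were $e$ non--elliptic with $G_e$ linear and $\rho(a),\rho(b)$ the two ends of its axis, $G_e$ would translate along $[\rho(a),\rho(b)]$ while fixing the closed edge $e$ pointwise, so no $G_e$--equivariant map of $e$ onto that axis could exist, and one is forced to make $\rho(a)$ and $\rho(b)$ equal --- which is what the previous step arranges.

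Finally I would extend over the triangles. For $\sigma=[a,b,c]$, either some edge of $\sigma$ is elliptic, whence $G_\sigma$ is a subgroup of that (elliptic) edge group and is itself elliptic, and it fixes $\rho(a),\rho(b),\rho(c)$ (each fixed by the corresponding vertex stabiliser), so by convexity $G_\sigma$ fixes pointwise the subtree $\tau_\sigma\subseteq\hat T$ they span; or all three edges of $\sigma$ lie in $Y$, which forces $\partial\sigma$ into a single component $W$ and $\rho|_{\partial\sigma}$ to be constant at $\xi_W$, fixed by $G_\sigma$. In the first case one extends $\rho|_{\partial\sigma}$ over the disc $\sigma$ into the contractible set $\tau_\sigma$ by a recipe depending only on $\rho|_{\partial\sigma}$ (say a standard fold of the triangle onto the tripod), so equivariance over the orbit is automatic; in the second case $\sigma$ is mapped constantly. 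The outcome is a $G$--equivariant continuous map $\rho\colon X\to\hat T$ carrying every simplex to a point, a geodesic, or a tripod, i.e.\ a resolution; if the operative definition of resolution also demands tautness or transversality of the tracks $\rho^{-1}(\text{midpoint of an edge of }T)$, this can be arranged afterwards by an equivariant homotopy supported away from $X^0$.

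The step I expect to be the main obstacle is the handling of $Y$ and its interplay with the edge extension: one must recognise that the linear/parabolic locus cannot be resolved edge by edge but has to be collapsed componentwise to points of $\partial T$, and that the second hypothesis is precisely what makes this componentwise collapse well defined and $G$--equivariant. Once that is in place, the remainder is the routine skeleton--by--skeleton extension of an equivariant map into a target that is contractible on each simplex.
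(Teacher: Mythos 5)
Your construction is essentially the paper's: collapse each component of the linear/parabolic locus to its chosen fixed end in $\partial T$, send the remaining vertices to fixed points of $\stab(v)$, extend over edges by (reduced) geodesics in $\hat T$ and over triangles by folding onto the spanned tripod. The one place you are too quick is the assertion that $G_e$ fixes $\rho(a)$ and $\rho(b)$ individually and hence ``acts trivially'' on $[\rho(a),\rho(b)]$: an element of $\stab(e)\setminus\stab^{+}(e)$ swaps $a$ and $b$, hence swaps $\rho(a)$ and $\rho(b)$ and reverses the geodesic, so the parametrisation of $e$ must be chosen symmetric about a point of $[\rho(a),\rho(b)]$ fixed by that element --- such a point exists because $\stab(e)$ fixes a point of $T$ and preserves the geodesic --- and this edge-inversion case (especially when both $\rho(a)$ and $\rho(b)$ lie in $\partial T$) is exactly where the paper's proof spends its effort.
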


Before constructing $\rho$ we will show that the second condition is always satisfied for our purposes.

\begin{lemma}\thlabel{res:con_res_defn}
Let $G$ be a group acting on a tree $T$ and let $K \leq G$ act on a $\mathcal{H}$--complex $X$. Let $W \subset X^{1}$ be a connected subcomplex such that the stabiliser of every cell in $W$ acts linearly or dihedrally on $T$. Then $\stab(W)$ also acts linearly or dihedrally on $T$. Moreover if $G$ doesn't admit any $D_{\infty}$-quotients then the action of $\stab(W)$ on $T$ is linear.
\end{lemma}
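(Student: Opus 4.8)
The plan is to exhibit a single line $\ell \subseteq T$ that is the axis of every cell stabiliser of $W$, show that $\ell$ is $\stab(W)$--invariant, and then read off both conclusions from how $\stab(W)$ acts on $\ell$. The key preliminary fact is that a group $H$ acting linearly or dihedrally on $T$ has a \emph{unique} invariant line, namely the common axis $\ell_H$ of its hyperbolic elements: any $H$--invariant line is translated --- not reversed, since an orientation-reversing isometry of a line fixes a point of $T$ --- by every hyperbolic $h \in H$, so it must equal $\mathrm{Axis}(h) = \ell_H$, while conversely $\ell_H$ is invariant because $H$ permutes the axes of its hyperbolic elements. Discarding the trivial case where $W$ is a single vertex (there $\stab(W)$ is a cell stabiliser and there is nothing to prove), we may assume $W$ has an edge, so that every edge stabiliser, and hence every incident vertex stabiliser, contains a hyperbolic element and has a well-defined axis.

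First I would show that these axes all agree. For an edge $e$ of $W$ and an endpoint $v$ of $e$, a hyperbolic element $h \in \stab(e)$ fixes no point of $T$, hence does not invert $e$, hence lies in $\stab(v)$; being hyperbolic in both $\stab(e)$ and $\stab(v)$, its axis is simultaneously the axis of $\stab(e)$ and that of $\stab(v)$, so these coincide. As $W$ is connected, chaining this over the cells of $W$ yields one line $\ell$ which is the axis of $\stab(c)$ for every cell $c$ of $W$. Now for $g \in \stab(W)$ and any cell $c$, the cell $gc$ again lies in $W$ and $\stab(gc) = g\,\stab(c)\,g^{-1}$ has axis $g\ell$; by the previous sentence this axis is $\ell$, so $g\ell = \ell$. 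Thus $\stab(W)$ preserves $\ell$, and since it contains a hyperbolic element (from any cell stabiliser) whose axis is then forced to be $\ell$, it acts linearly or dihedrally on $T$.

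For the final assertion, suppose $G$ admits no $D_\infty$--quotient but $\stab(W)$ acts dihedrally. Then some $g \in \stab(W)$ reverses $\ell$ while $\stab(W)$ also contains a translation along $\ell$; since a line admits no glide reflection, $g$ restricts to an involution of $\ell$, so the image of $\stab(W)$ in $\mathrm{Isom}(\ell) \cong D_\infty$ contains a non-trivial translation and an involution and is therefore isomorphic to $D_\infty$. The kernel of the resulting surjection $\stab(W) \twoheadrightarrow D_\infty$ is the pointwise stabiliser of $\ell$, which lies in the stabiliser of an edge of $\ell$ and hence is slender (the edge stabilisers of the trees we deal with being slender). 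This exhibits a $D_\infty$--quotient of $\stab(W) \leqslant G$ with slender kernel, contradicting the hypothesis; so $\stab(W)$ must act linearly.

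The one substantive step is the first --- that the linear or dihedral cell stabilisers share a \emph{common} axis --- which rests on the uniqueness of the invariant line of such an action and on the containment $\stab(e) \leqslant \stab(v)$ for incident cells; once that is in hand, the $\stab(W)$--invariance of $\ell$ and the $D_\infty$ dichotomy are essentially formal. A harmless subtlety is that $\stab(W)$ could a priori be elliptic rather than genuinely linear or dihedral; but, preserving $\ell$, it can in no case act parabolically or hyperbolically --- which is all that is needed --- and in the applications $\stab(W)$ does contain the relevant non-elliptic cell stabilisers.
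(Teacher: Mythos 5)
Your proof follows essentially the same route as the paper's, which simply observes that if $E \leq V$ both act linearly or dihedrally on $T$ then they must fix the same line, and chains this over the connected complex $W$; yours is a fully spelled-out version of that argument, and the final $D_\infty$--quotient step (image in $\mathrm{Isom}(\ell) \cong D_\infty$, kernel the pointwise stabiliser of $\ell$, hence inside an edge group of $T$ and so slender) is a correct way to get the ``moreover'' clause that the paper leaves implicit. One inference is wrong as written: you argue that a hyperbolic element $h \in \stab(e)$ ``fixes no point of $T$, hence does not invert $e$.'' Inverting $e$ concerns fixed points in the complex $X$, not in $T$, so being hyperbolic on $T$ does not preclude swapping the endpoints of $e$. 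The step you are justifying survives with a one-line repair: if $h$ inverts $e$, replace it by $h^2$, which fixes the endpoint $v$ and is still hyperbolic on $T$ with the same axis, so the axes of $\stab(e)$ and $\stab(v)$ coincide as you claim.
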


\begin{proof}
Suppose that $E \leq V$ are subgroups of $G$ which both act linearly or dihedrally on $T$. Then $V$ must fix the same line as $E$ as otherwise $V$ would contain hyperbolic group elements with different axes, contradicting the fact that $V$ fixes a line of $T$. It follows that every cell in $W$ fixes a common line in $T$ which implies the result.  $\square$
\end{proof}

\begin{proof}[Proof of \thref{res:Dun_Del_Pot_res}]
First for each maximal subcomplex $W$ as in the second condition of the statement we (equivariantily) choose a point in $\partial T$ which it fixes.

Next we need to (equivariantily) map each vertex $v$ of $X$ to either a vertex of $T$ or a point on $\partial T$. If a vertex is contained in a subcomplex $W$ as above we define $\rho(v)$ to be equal to the point on $\partial T$ corresponding to $W$. Otherwise we just define $\rho(v)$ to be any vertex of $T$ which $\stab(v)$ fixes.

Now let $e = [u,v]$ be an edge of $X$. If $\rho(u) = \rho(v)$ then we can just take $\rho$ to be constant on $e$. Otherwise we want $\rho(e)$ to be the reduced edge path from $\rho(u)$ to $\rho(v)$; however we need to be careful with the parametrisation if $\stab(e) \neq \stab^{+}(e)$. If $\rho(u)$ and $\rho(v)$ are in $T$ we can just take the parametrisation to be linear as the midpoint of $[\rho(u),\rho(v)]$ is fixed by $\stab(e)$. If $\rho(u)$ is in $T$ but $\rho(v)$ isn't then $u$ and $v$ are in different orbits and so $\stab(e) = \stab^{+}(e)$. Finally suppose $\rho(u)$ and $\rho(v)$ are in $\partial T$. By assumption $\stab(e)$ fixes a vertex $v \in T$. Let $y \in [\rho(u),\rho(v)]$ be the closest vertex to $x$. Since $\stab(e)$ fixes $x$ and preserves $[\rho(u),\rho(v)]$ it must also fix $y$. Now map the midpoint of $e$ to $y$ and map the rest in any way which is symmetric through $y$. 

Extending the map affinely over triangles works for similar reasons. $\square$
\end{proof}

There are two different cases we will consider, depending on if an edge gets mapped to $\partial T$ or not. If $\rho^{-1}(\partial T)$ doesn't contain any edges then we say that this is a resolution of type I or a \emph{splitting resolution}. Otherwise $\rho^{-1}(\partial T)$ contains an edge and we call this a type II or a \emph{contracting resolution}. The splitting resolutions will allow us to modify the hierarchy so that its terminal vertices are elliptic in $T$. The contracting resolutions are an issue but we will modify them so that they become splitting resolutions.

First we will describe what to do in the case of a splitting resolution. Let $\rho: X \rightarrow \hat{T}$ be as above. Let $\Lambda \subset X$ be the inverse images of the midpoints of the edges in $T$ and observe that this is a collection of tracks (in the sense of Dunwoody \cite{Dunwoody_fp}). Let $\Lambda^* \subset \Lambda$ be the tracks which partition $X$ into two infinite parts and let $X^* := X \backslash \rho^{-1}(\partial T)$. Observe that $X^* / \Lambda^*$ (obtained by collapsing each track in $\Lambda^*$ to a point) is a $2$-dimensional cell complex where all the $2$--cells are either bigons or triangles. Finally $X_T$ is defined to be the reduction of $X^* / \Lambda^*$. Observe that the image of each triangle of $X$ in $X_T$ contains at most one triangle and so $\covol(X_T) \leq \covol(X)$.

\begingroup%
  \makeatletter%
  \providecommand\color[2][]{%
    \errmessage{(Inkscape) Color is used for the text in Inkscape, but the package 'color.sty' is not loaded}%
    \renewcommand\color[2][]{}%
  }%
  \providecommand\transparent[1]{%
    \errmessage{(Inkscape) Transparency is used (non-zero) for the text in Inkscape, but the package 'transparent.sty' is not loaded}%
    \renewcommand\transparent[1]{}%
  }%
  \providecommand\rotatebox[2]{#2}%
  \newcommand*\fsize{\dimexpr\f@size pt\relax}%
  \newcommand*\lineheight[1]{\fontsize{\fsize}{#1\fsize}\selectfont}%
  \ifx\svgwidth\undefined%
    \setlength{\unitlength}{413.85826772bp}%
    \ifx\svgscale\undefined%
      \relax%
    \else%
      \setlength{\unitlength}{\unitlength * \real{\svgscale}}%
    \fi%
  \else%
    \setlength{\unitlength}{\svgwidth}%
  \fi%
  \global\let\svgwidth\undefined%
  \global\let\svgscale\undefined%
  \makeatother%
  \begin{picture}(1,0.26027397)%
    \lineheight{1}%
    \setlength\tabcolsep{0pt}%
    \put(0.70431628,-0.41812724){\color[rgb]{0,0,0}\makebox(0,0)[lt]{\begin{minipage}{0.12438615\unitlength}\centering \end{minipage}}}%
    \put(0.03662987,0.3576232){\color[rgb]{0,0,0}\makebox(0,0)[lt]{\begin{minipage}{0.93116981\unitlength}\centering \end{minipage}}}%
    \put(0,0){\includegraphics[width=\unitlength,page=1]{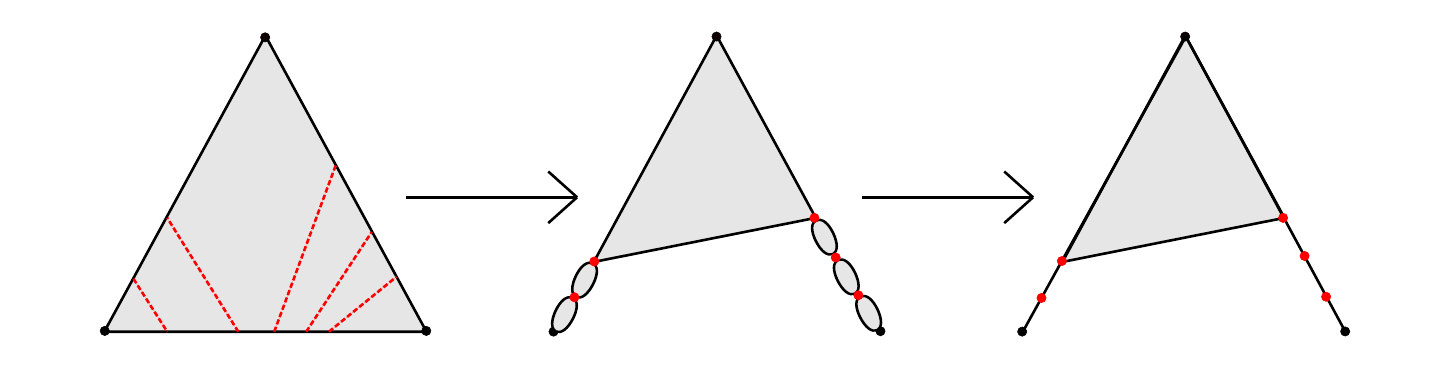}}%
    \put(0.26048117,0.13939761){\color[rgb]{1,0,0}\makebox(0,0)[t]{\lineheight{1.25}\smash{\begin{tabular}[t]{c}$\Lambda^*$\end{tabular}}}}%
  \end{picture}%
\endgroup%

\begin{remark}\thlabel{res:edge_passdown}
Unlike with triangles, the image of an edge of $X$ in $X_T$ need not be a single edge and is in general a (potentially infinite) sequence of edges. However suppose we are given a triangle in $X$ whose image in $X_T$ contains a triangle. Then the edges of this new triangle in $X_T$ will be a single edge in the image of a corresponding edge of the original triangle. 
\end{remark}

\begin{lemma}\thlabel{res:split_res_sc}
Suppose that $X$ is connected, has $H^1(X, \mathbb{Z}_2) = 0$ and $\rho: X \rightarrow T$ is a splitting resolution. Suppose also that every cutpoint of $X$ has stabiliser which fixes a point of $T$. Then $X_T$ is connected with $H^1(X_T, \mathbb{Z}_2) = 0$.
\end{lemma}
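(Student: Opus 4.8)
The plan is to track connectivity and the vanishing of $H^1(-,\mathbb{Z}_2)$ through the two operations that build $X_T$ from $X$: first removing $\rho^{-1}(\partial T)$ (which by the splitting hypothesis contains no edges, so it is a discrete set of vertices) to form $X^*$, then collapsing the tracks of $\Lambda^*$ and reducing. Since $\rho^{-1}(\partial T)$ is a set of isolated vertices, removing them from $X$ can only disconnect $X$ at cutpoints; this is precisely where the hypothesis on cutpoints will be used. Note $H^1(-,\mathbb{Z}_2) = 0$ is equivalent to saying that every embedded loop (more precisely every class in $H_1$, but over $\mathbb{Z}_2$ in a $2$-complex this is detected by cycles that are sums of triangle boundaries, i.e. that bound) separates — equivalently every track in $X$ separates $X$. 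So the content of $H^1(X,\mathbb{Z}_2) = 0$ is that \emph{every} track of $\Lambda$, not just those in $\Lambda^*$, separates $X$ into two pieces, at least one of which may be finite.

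First I would handle connectivity. A vertex $p \in \rho^{-1}(\partial T)$ lies in some maximal subcomplex $W$ as in \thref{res:con_res_defn}, and removing $p$ from $X$ disconnects a neighbourhood only if $p$ is a cutpoint of $X$; by hypothesis $\stab(p)$ then fixes a point of $T$, which contradicts $\rho(p) \in \partial T$ unless $p$ is not actually a cutpoint. Hence $X^* = X \setminus \rho^{-1}(\partial T)$ is connected. Collapsing a track in a connected complex, and passing to the reduction, both preserve connectedness, so $X_T$ is connected.

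Next, the vanishing of $H^1(X_T,\mathbb{Z}_2)$. The cleanest route is to show every track $\tau$ in $X_T$ separates it. Pull $\tau$ back: by \thref{res:edge_passdown} the edges of any triangle of $X_T$ come from single edges of a triangle of $X$, so a track in $X_T$ (which meets triangles in arcs crossing pairs of edges) pulls back to a track $\tilde\tau$ in $X$ — more precisely to a union of pieces of a track in $X$, after undoing the collapse of $\Lambda^*$ and the reduction. Since $H^1(X,\mathbb{Z}_2)=0$, $\tilde\tau$ separates $X$; I then argue that the two sides of $\tilde\tau$, after deleting the isolated vertices of $\rho^{-1}(\partial T)$ (which, not being cutpoints, do not merge the two sides) and collapsing $\Lambda^*$ and reducing, descend to the two sides of $\tau$ in $X_T$. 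Hence $\tau$ separates $X_T$, and so $H^1(X_T,\mathbb{Z}_2) = 0$.

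The main obstacle I anticipate is the bookkeeping in the second half: making precise the claim that a separating track downstairs comes from a separating track upstairs, given that the map $X \to X_T$ is neither injective on edges (\thref{res:edge_passdown}) nor a quotient by a group action, and given that collapsing each track of $\Lambda^*$ to a point can in principle identify vertices lying on opposite sides of some other track. One must check that collapsing $\Lambda^* $ — tracks which by definition separate $X$ into two \emph{infinite} parts — is compatible with the side-decomposition coming from $\tilde\tau$, and that reduction (merging bigons) does not create a non-separating loop. I expect this to go through because collapsing a separating subset and reducing are both operations under which "every track separates" is stable, but verifying it carefully is where the work lies.
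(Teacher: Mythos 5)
The connectivity half of your argument is fine and is essentially the paper's: removing the isolated vertices of $\rho^{-1}(\partial T)$ can only disconnect $X$ at a cutpoint, and the hypothesis rules that out. The cohomological half, however, has genuine gaps and is not the paper's route. First, you invoke the equivalence ``$H^1(\,\cdot\,,\mathbb{Z}_2)=0$ iff every track separates'' in the non-standard direction. Dunwoody's lemma gives only that $H^1=0$ \emph{implies} tracks separate; to conclude $H^1(X_T,\mathbb{Z}_2)=0$ from ``every track of $X_T$ separates'' you must show that every nonzero class is detected by a (component track of a) pattern and that separation of a connected track is equivalent to its dual cocycle being a coboundary. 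This can be done, but it is not the tautology your parenthetical suggests (``every embedded loop separates'' is not what $H^1=0$ means in a $2$--complex), and it is a substantive lemma you would need to state and prove. Second, the pull-back step is not mere bookkeeping: if a track $\tau$ of $X_T$ passes through a point that is the image of a collapsed track $\mu\in\Lambda^*$, then its preimage contains all of $\mu$ and is not a track; and if instead you build $\tilde\tau$ disjoint from $\Lambda^*$, you still must rule out that some $\mu\in\Lambda^*$ meets both sides of $\tilde\tau$, since collapsing such a $\mu$ would reconnect the complement. You explicitly defer this, but it is where the proof lives, not a verification after the fact.

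More tellingly, your argument never uses the resolution $\rho$ beyond the fact that $\rho^{-1}(\partial T)$ contains no edges, so if it worked it would prove the lemma for an arbitrary discrete invariant set of non-cutpoint vertices --- which is false: deleting a vertex from a $2$--complex creates a new cohomology class for every nontrivial cycle in its link, and nothing in your argument kills these. The paper's proof is organized around exactly this danger. It takes the second barycentric subdivision, writes $X$ as $A\cup C$ with $A$ a union of stars of the deleted vertices and $C$ a deformation retract of $X^*$, and uses Mayer--Vietoris to reduce to showing that every loop $d$ in the link $L$ of a deleted vertex $v$ dies in $X_T$. It then kills $d$ by a geometric argument special to the resolution: since $\rho(v)\in\partial T$ while the link maps into $T$, one can choose an edge $f$ of $T$ on the rays $[\rho(w_i),\rho(v)]$ missing $\rho(d)$, and the track over the midpoint of $f$ meets the star of $v$ in a circle parallel to $d$; collapsing tracks then null-homotopes $d$. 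Your proposal contains no substitute for this step, so the classes that actually threaten the conclusion are never addressed. I would recommend abandoning the ``all tracks of $X_T$ separate'' criterion and instead isolating the links of the deleted vertices as the paper does.
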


\begin{proof}
Since every cutpoint of $X$ acts elliptically on $T$ we see that $X^*$ is connected and therefore $X_T$ is as well. Since each track is connected it now suffices to show that each cycle in $H^1(X^*,\mathbb{Z}_2)$ is a boundary when mapped into $X_T$. 

Let $B$ be the second barycentric subdivision of $X$. Let $C \subset B$ be the union of simplices which are disjoint from $\rho^{-1}(\partial T)$ and $A \subset B$ be the union of the simplices (and their subsimplices) which intersect non-trivially with $\rho^{-1}(\partial T)$. Also let $L = A \cap C$. 

Consider the Mayer-Vietoris sequence for $A$ and $C$;
\begin{equation*}
\cdots \rightarrow H^{1}(X,\mathbb{Z}_2) \rightarrow H^{1}(A,\mathbb{Z}_2) \oplus H^{1}(C,\mathbb{Z}_2) \rightarrow H^{1}(L,\mathbb{Z}_2) \rightarrow \cdots
\end{equation*}
Since $B$ is the second barycentric subdivision of $X$ we see that the stars of two distinct vertices which are in $\rho^{-1}(\partial T)$ are disjoint. Hence each component of $A$ is the star of a point and so $H^{1}(A,\mathbb{Z}_2) \cong 0$. Similarly we see that $C \subset X^*$ is a deformation retract. Hence the sequence becomes 
\begin{equation*}
0 \rightarrow H^{1}(X^{*},\mathbb{Z}_2) \rightarrow H^{1}(L,\mathbb{Z}_2) \rightarrow \cdots
\end{equation*}
In particular it suffices to show that each loop of $L$ dies when we pass to $X_T$. 

\begingroup%
  \makeatletter%
  \providecommand\color[2][]{%
    \errmessage{(Inkscape) Color is used for the text in Inkscape, but the package 'color.sty' is not loaded}%
    \renewcommand\color[2][]{}%
  }%
  \providecommand\transparent[1]{%
    \errmessage{(Inkscape) Transparency is used (non-zero) for the text in Inkscape, but the package 'transparent.sty' is not loaded}%
    \renewcommand\transparent[1]{}%
  }%
  \providecommand\rotatebox[2]{#2}%
  \newcommand*\fsize{\dimexpr\f@size pt\relax}%
  \newcommand*\lineheight[1]{\fontsize{\fsize}{#1\fsize}\selectfont}%
  \ifx\svgwidth\undefined%
    \setlength{\unitlength}{413.85826772bp}%
    \ifx\svgscale\undefined%
      \relax%
    \else%
      \setlength{\unitlength}{\unitlength * \real{\svgscale}}%
    \fi%
  \else%
    \setlength{\unitlength}{\svgwidth}%
  \fi%
  \global\let\svgwidth\undefined%
  \global\let\svgscale\undefined%
  \makeatother%
  \begin{picture}(1,0.26027397)%
    \lineheight{1}%
    \setlength\tabcolsep{0pt}%
    \put(0.03662987,0.3576232){\color[rgb]{0,0,0}\makebox(0,0)[lt]{\begin{minipage}{0.93116981\unitlength}\centering \end{minipage}}}%
    \put(0,0){\includegraphics[width=\unitlength,page=1]{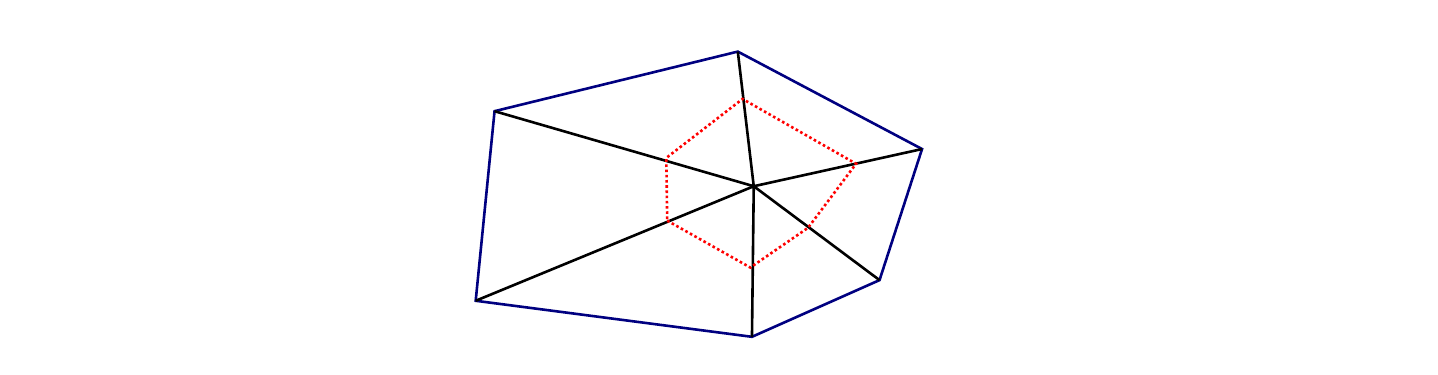}}%
    \put(0.53402323,0.14231914){\color[rgb]{0,0,0}\makebox(0,0)[t]{\lineheight{1.25}\smash{\begin{tabular}[t]{c}$v$\end{tabular}}}}%
    \put(0.55687264,0.06789534){\color[rgb]{1,0,0}\makebox(0,0)[t]{\lineheight{1.25}\smash{\begin{tabular}[t]{c}$\lambda$\end{tabular}}}}%
    \put(0.39627397,0.11424694){\color[rgb]{0,0,0}\makebox(0,0)[t]{\lineheight{1.25}\smash{\begin{tabular}[t]{c}$t_i$\end{tabular}}}}%
    \put(0.3133633,0.03655901){\color[rgb]{0,0,0}\makebox(0,0)[t]{\lineheight{1.25}\smash{\begin{tabular}[t]{c}$w_{i-1}$\end{tabular}}}}%
    \put(0.33258825,0.18867096){\color[rgb]{0,0,0}\makebox(0,0)[t]{\lineheight{1.25}\smash{\begin{tabular}[t]{c}$w_{i}$\end{tabular}}}}%
    \put(0.43022166,0.06694992){\color[rgb]{0,0,0}\makebox(0,0)[t]{\lineheight{1.25}\smash{\begin{tabular}[t]{c}$e_{i-1}$\end{tabular}}}}%
    \put(0.44393132,0.16683437){\color[rgb]{0,0,0}\makebox(0,0)[t]{\lineheight{1.25}\smash{\begin{tabular}[t]{c}$e_{i}$\end{tabular}}}}%
    \put(0.30944623,0.11685829){\color[rgb]{0,0,0.50196078}\makebox(0,0)[t]{\lineheight{1.25}\smash{\begin{tabular}[t]{c}$p_i$\end{tabular}}}}%
    \put(0.60257146,0.19940875){\color[rgb]{0,0,0.50196078}\makebox(0,0)[t]{\lineheight{1.25}\smash{\begin{tabular}[t]{c}$d$\end{tabular}}}}%
  \end{picture}%
\endgroup%

Let $d$ be a reduced edge path of $L$. There is $v \in \rho^{-1}(\partial T)$ with $d$ homotopic to an edge path $p_1 \cdots p_n$ in the link of $v$. Let $w_i$ be the vertex common to both $p_i$ and $p_{i+1}$ (where the indices are taken modulo $n$). Also let $e_i$ be the edge connecting $w_i$ to $v$ and $t_i$ be the triangle with edges $w_{i-1}$, $w_{i}$ and $v$. Since $\rho(w_i) \neq \rho(v)$ for any $i$ and $\rho(v) \in \partial T$ we can choose an edge $f \in T$ such that $f \in [\rho(w_i),\rho(v)]$ and $f \cap \rho(p_i) = \emptyset$ for all $i$. Thus there is a track $\lambda$ (which maps to the midpoint of $f$) whose intersection with $t_1 \cup \cdots \cup t_n$ is homotopic to $d$. This implies that $d$ is null-homotopic in $X_T$ which implies the result. $\square$
\end{proof}

We will now detail what to do in the case of a contracting resolution. Recall that this is the case where $\rho^{-1}(\partial T)$ contains an edge of $X$. Define $X_C$ to be the complex obtained by collapsing each component of $\rho^{-1}(\partial T) \subset X$ to a point. We summarise the properties of $X_C$ in the following lemma.

\begin{lemma}\thlabel{res:con_res_stabs}
Notation as before. Suppose that the cell stabilisers of $X$ are all either slender or elliptic in $T$. Then every vertex stabiliser of $X_C$ is either equal to a vertex stabiliser of $X$ or fixes a line of $T$ and hence is slender. Moreover $\rho$ descends to a map $X_C \rightarrow \hat{T}$ where the midpoint of each edge of $X_C$ gets sent to a point in $T$ and $\covol(X_C) \leq \covol(X)$. 
\end{lemma}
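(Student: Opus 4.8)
The plan is to analyse each of the three claimed properties separately, working directly from the construction of $X_C$ as the quotient of $X$ obtained by collapsing each connected component of $\rho^{-1}(\partial T)$ to a point. First I would verify the statement about vertex stabilisers. A vertex $\bar{v}$ of $X_C$ is either the image of a single vertex $v$ of $X$ with $v \notin \rho^{-1}(\partial T)$, in which case $\stab(\bar v) = \stab(v)$ is a vertex stabiliser of $X$ and there is nothing to prove; or $\bar v$ is the image of an entire component $W$ of $\rho^{-1}(\partial T)$. In the latter case $W$ is a connected subcomplex of $X^1$ every edge of which maps into $\partial T$, so every edge stabiliser of $W$ acts either linearly, dihedrally or parabolically on $T$. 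By hypothesis the cell stabilisers of $X$ are slender or elliptic in $T$, and a slender group cannot act parabolically or (by the classification recalled in the preliminaries) hyperbolically, so in fact every edge stabiliser of $W$ acts linearly or dihedrally on $T$; moreover an elliptic cell stabiliser that also stabilises an edge of $W$ would fix both a point and (being a subgroup of something fixing the boundary point $\rho$ assigns to $W$) a ray, which is fine. Then \thref{res:con_res_defn} applies to $W$ to give that $\stab(W)$ acts linearly or dihedrally on $T$, hence fixes a line, hence is slender since the edge stabilisers of $T$ are slender (using the remark in the preliminaries that linear and dihedral subgroups over slender edge stabilisers are slender). This gives the dichotomy claimed.

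Next I would check that $\rho$ descends to a well-defined $G$-equivariant map $X_C \to \hat T$. Each component $W$ of $\rho^{-1}(\partial T)$ is by construction sent by $\rho$ to the single boundary point that $\rho$ assigned to the maximal subcomplex containing $W$ (from the proof of \thref{res:Dun_Del_Pot_res}), so $\rho$ is constant on $W$ and therefore factors through the collapse map $X \to X_C$. Equivariance is inherited because the collapse is performed equivariantly (components of $\rho^{-1}(\partial T)$ are permuted by $G$, compatibly with how $G$ permutes the associated boundary points). For the statement about midpoints of edges of $X_C$: an edge of $X_C$ is the image of an edge $e = [u,v]$ of $X$ that is \emph{not} contained in $\rho^{-1}(\partial T)$, so at least one endpoint, say $u$, has $\rho(u) \in T$; I would argue that the image edge in $X_C$ meets $\rho^{-1}(\partial T)$ only at (possibly) its endpoints, and hence under the descended $\rho$ its midpoint lands in $T$ rather than $\partial T$ — exactly as in the splitting-resolution discussion, where the parametrisation of $\rho$ on $e$ was chosen so that $\rho$ is eventually constant on $\partial T$ only near an endpoint lying in $\partial T$. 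In the contracting case such an endpoint has been collapsed, leaving an edge all of whose interior (in particular its midpoint) maps into $T$.

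Finally, the covolume inequality $\covol(X_C) \le \covol(X)$: collapsing a subcomplex of the $1$-skeleton to a point can only identify or destroy $2$-cells, never create new ones, so the image of each triangle of $X$ in $X_C$ contains at most one triangle, and distinct $G$-orbits of triangles of $X_C$ lift to distinct $G$-orbits of triangles of $X$; hence the number of orbits of triangles does not increase. (If a collapse turns a triangle into a bigon or point, we pass to the reduction as before, which again does not increase triangle count.) I expect the main obstacle to be the middle point: namely checking carefully that after the equivariant collapse the descended $\rho$ still sends edge midpoints of $X_C$ into $T$ — this requires being precise about which part of an edge $e$ of $X$ the map $\rho$ sends into $\partial T$, and confirming that it is exactly the portion adjacent to a collapsed endpoint, so that nothing in the interior of the surviving edge maps to the boundary. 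The other two properties are essentially bookkeeping once \thref{res:con_res_defn} is invoked.
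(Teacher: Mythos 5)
Your proposal is correct and follows essentially the same route as the paper, whose entire proof of this lemma is the single sentence that the properties are immediate from the definition of $X_C$ together with \thref{res:con_res_defn}; you simply unpack that remark (collapsed vertices correspond to components of $\rho^{-1}(\partial T)$, whose stabilisers fix a line by \thref{res:con_res_defn} and are hence slender; $\rho$ descends because it is constant on each collapsed component; triangles are never created by collapsing, so covolume does not increase). Your version is in fact more careful than the paper's, particularly on the point that a collapsed component may contain edges with elliptic stabiliser, which the paper glosses over entirely.
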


\begin{proof}
These properties are all obvious from the definition of $X_C$ together with \thref{res:con_res_defn}. $\square$
\end{proof}

This concludes our discussion on contracting resolutions. We also need a general method for splitting a complex over its cutpoints. Suppose $X$ is a $\mathcal{H}$--complex for a subgroup $K \leqslant G$.  Construct a bipartite tree $B_X$, called the \emph{cutpoint tree}, with vertices which correspond to the cutpoint-free components of $X$ and the cut vertices of $X$ and with the edges of $B_X$ defined in the obvious way by inclusion. Note that $K$ acts on $B_X$ with vertex stabilisers which are equal to the corresponding stabilisers in $X$ and edge stabilisers which are equal to the stabiliser of a connected component of a link.

The above is fine if every cell of $X$ has slender stabiliser, such as in the case of hyperbolic groups; however in general it may be the case that $B_X$ has edge groups which are $H$--elliptic but not slender. To counteract this we introduce a \emph{reduced cutpoint tree} $B'_X$ by collapsing the edges of $B_X$ which have non-slender stabiliser. Observe that $B'_X$ can naturally be thought of as a $\mathcal{H}$--structure (of depth $1$) with the properties summarised in the following lemma.

\begin{lemma}\thlabel{res:cutpoint_split}
Let $B'_X$ be the cutpoint tree defined above.
\begin{enumerate}
\item \label{pt:cutpoint_covol} $\covol(B'_X) \leq \covol(X)$
\item \label{pt:cutpoint_elliptic} If $X_T$ is the splitting resolution of some complex $X$ where the cutpoints of $X$ have non-slender stabilisers then the vertex groups of $B'_{X_T}$ are either slender or elliptic in $T$.
\end{enumerate}
\end{lemma}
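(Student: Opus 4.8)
The plan is to verify each of the two numbered properties of the reduced cutpoint tree $B'_X$ more or less directly from the construction, using the observations already made about the cutpoint tree $B_X$ and the splitting resolution $X_T$.

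For part~\ref{pt:cutpoint_covol}, I would observe that the triangles of $X$ are distributed among the cutpoint-free components of $X$, since a triangle is connected and contains no cutpoint of $X$ in its interior, hence lies entirely inside one such component. Thus each vertex of $B_X$ corresponding to a cutpoint-free component is itself a $\mathcal{H}$--complex (being a subcomplex of $X$, it inherits $H^1(\cdot,\mathbb{Z}_2)=0$ from being cutpoint-free together with the fact that $X$ has trivial $\mathbb{Z}_2$-cohomology — or, more safely, one takes these pieces with their induced structure and notes the triangles are simply partitioned), and the sum of their covolumes is at most $\covol(X)$. Collapsing edges to pass from $B_X$ to $B'_X$ only amalgamates vertex groups and discards the cut-vertex vertices that carry no triangles, so it does not increase the total covolume. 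Hence $\covol(B'_X)\leq\covol(X)$. The one subtlety here is making sure the pieces really are honest $\mathcal{H}$--complexes, i.e.\ connected $2$--complexes with vanishing $\mathbb{Z}_2$-cohomology and with the required cell stabilisers; connectedness is automatic and the stabiliser condition is inherited from $X$, while the cohomology vanishing for a cutpoint-free component should be checked (or cited) as a standard fact.

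For part~\ref{pt:cutpoint_elliptic} I would argue as follows. By \thref{res:con_res_stabs}-style reasoning (applied to the splitting resolution construction of $X_T$), the vertex stabilisers of $X_T$ are either vertex stabilisers of $X$, which act elliptically on $T$ by hypothesis, or stabilisers of tracks in $\Lambda^*$, which fix the midpoint of an edge of $T$ and so are slender and elliptic too; in fact every vertex stabiliser of $X_T$ is either slender or equal to a vertex stabiliser of $X$ and hence elliptic in $T$. Now consider the reduced cutpoint tree $B'_{X_T}$. Its non-collapsed-away vertices of the first type (cutpoint-free components) have stabilisers that fix a point of $T$, using the hypothesis that the cutpoints of $X$ — hence the cutpoints of $X_T$ — have non-slender stabilisers: a cutpoint-free component $Y$ of $X_T$ is a connected subcomplex whose edge stabilisers act elliptically on $T$, so applying \thref{res:con_res_defn} (with the dihedral case excluded by looking at where things live) $\stab(Y)$ acts elliptically or dihedrally, and the non-slender hypothesis on cutpoints rules out the non-elliptic possibilities, giving ellipticity. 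Vertices of $B'_{X_T}$ coming from cut vertices of $X_T$ that survive the reduction have non-slender stabiliser by construction, but these are exactly vertex stabilisers of $X_T$ and hence equal to vertex stabilisers of $X$, which are elliptic in $T$. The amalgamations produced by collapsing edges with non-slender (hence $\mathcal{H}$--elliptic but not slender, hence vertex-group) stabilisers only merge elliptic subgroups along common elliptic subgroups, and a group generated by elliptic subgroups sharing fixed points stays elliptic. Hence every vertex group of $B'_{X_T}$ is slender or elliptic in $T$.

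The main obstacle I expect is bookkeeping in part~\ref{pt:cutpoint_elliptic}: one must carefully track which vertices of $B'_{X_T}$ are cutpoint-free components versus surviving cut vertices, confirm that the edge collapses defining $B'_{X_T}$ really do only collapse along subgroups that are vertex groups of $X_T$ (so amalgamation preserves ellipticity), and invoke the hypothesis that cutpoints of $X$ have non-slender stabiliser at exactly the right moment — namely to upgrade ``elliptic or dihedral'' to ``elliptic'' for the cutpoint-free pieces, and to note that a surviving cut vertex has non-slender, hence elliptic, stabiliser. Once this dictionary is set up the argument is short; the risk is conflating the role of cutpoints of $X$ with those of $X_T$, so I would take care to note that collapsing tracks and reducing does not create new cutpoints with slender stabilisers out of old ones.
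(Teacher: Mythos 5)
There is a genuine gap in your argument for part~\ref{pt:cutpoint_elliptic}, and a smaller omission in part~\ref{pt:cutpoint_covol}. For part~\ref{pt:cutpoint_covol}, the inequality is not just about partitioning triangles among the pieces: $\covol$ counts \emph{orbits} of triangles, so you must rule out the possibility that a single $K$--orbit of triangles in $X$ splits into several $K_v$--orbits inside a piece $X_v$. The point you need (and which is the entire content of the paper's proof of this part) is that if $g\in K$ carries a triangle of $X_v$ to another triangle of $X_v$, then $g$ preserves $X_v$ and hence lies in $K_v$; together with the fact that each triangle lies in at most one subcomplex corresponding to a non-slender vertex of $B'_X$, this gives the orbit count. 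Your worry about the pieces being honest $\mathcal{H}$--complexes is reasonable but is not where the inequality lives.

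For part~\ref{pt:cutpoint_elliptic} your argument does not work. You try to deduce that the stabiliser of a cutpoint-free component $Y$ of $X_T$ is elliptic in $T$ by ``applying \thref{res:con_res_defn}-style reasoning'' to a connected subcomplex all of whose cells have elliptic stabilisers. But \thref{res:con_res_defn} is specifically about cells acting \emph{linearly or dihedrally} (its proof hinges on the fact that a line-preserving overgroup of a line-fixing group must fix the same line); there is no analogous statement for elliptic cells, and the corresponding claim is false in general — a connected complex can have every cell stabiliser trivial while its global stabiliser acts hyperbolically on $T$. Indeed, the entire purpose of the resolution $\rho$ and the tracks $\Lambda^*$ is to manufacture ellipticity that cannot be read off from cell stabilisers alone. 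The correct argument, which you never invoke, is that by construction each connected component $Y$ of $X^*\setminus\Lambda^*$ is mapped by $\rho$ into a single component of $T$ with the edge-midpoints removed, i.e.\ into the star of a vertex of $T$, so $\stab(Y)$ fixes that vertex; the image $Y'$ of $\bar{Y}$ in $X_T$ is then a maximal subcomplex with no slender-stabiliser cutpoints, $\stab(Y')=\stab(Y)$, and every subcomplex corresponding to a non-slender vertex of $B'_{X_T}$ is contained in such a $Y'$. You also assert that the vertex stabilisers of $X$ are elliptic in $T$ ``by hypothesis'', but the statement of part~\ref{pt:cutpoint_elliptic} assumes only that the \emph{cutpoints} of $X$ have non-slender stabilisers; in a splitting resolution, vertices of $X$ may still be mapped to $\partial T$ and have merely linear stabilisers.
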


\begin{proof}
For part \ref{pt:cutpoint_covol} we first observe that each triangle of $X$ sits in at most one subcomplex corresponding to a non-slender vertex of $B'_X$. Thus we just need to know if two triangles $t$ and $t' = gt$ (with $g \in K$) live in the same subcomplex $X_v \subseteq X$ corresponding to a vertex $v \in B'_{X}$ then they still lie in the same orbit when restricted to the vertex group $K_v$. This follows because if $g \in K$ sends a triangle of $X_v$ to another in $X_v$ then it must preserve $X_v$, so $g \in K_v$. 

For part \ref{pt:cutpoint_elliptic} we first observe that the stabiliser of each connected component $Y \subseteq X^* \backslash \Lambda^*$ is elliptic in $T$ by construction. The image of $\bar{Y}$ in $X_T$ is a maximal subcomplex $Y'$ which doesn't contain any cutpoints with slender stabiliser. Moreover the stabiliser of $Y$ is the same as the stabiliser of $Y'$. A subcomplex of $X_T$ corresponding to a non-slender vertex of $B'_{X_T}$ is contained in such a $Y'$. Hence the non-slender vertices of $B'_{X_T}$ are elliptic in $T$. $\square$
\end{proof}

Putting all of this together we obtain the following.

\begin{lemma}\thlabel{res:passdown_full}\cite[Lemma 7.2]{LouderTouikan_strong}
Suppose $G$ does not admit any $D_{\infty}$--quotients. Let $T$ be a tree which $K \leqslant G$ acts on with slender edge stabilisers and let $\mathcal{K}$ be a $\mathcal{H}$--structure for $K$. Then for each vertex $v \in T$ there is a $\mathcal{H}$--structure $\mathcal{K}_v$ with
\begin{equation*}
\sum_i \covol(\mathcal{K}_{v_i}) \leq \covol(\mathcal{K})
\end{equation*}
where $\left\lbrace v_i \right\rbrace$ is a set of representatives for the orbits of vertices of $T$.
\end{lemma}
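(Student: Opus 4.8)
The plan is to assemble \thref{res:passdown_full} from the pieces already developed, using the cutpoint tree to reduce to the situation where the complexes have no cutpoints with slender stabiliser, then applying the Dunwoody--Delzant--Potyagailo resolution to split over $T$. Concretely, start with a $\mathcal{H}$--structure $\mathcal{K}$ for $K$, so that to each terminal vertex $w\in\mathcal{K}$ is attached a $\mathcal{H}$--complex $X_w$ acted on by $K_w$. For each such $X_w$, first pass to the reduced cutpoint tree $B'_{X_w}$ of \thref{res:cutpoint_split}; this is itself a $\mathcal{H}$--structure of depth $1$, and by part~\ref{pt:cutpoint_covol} of that lemma it does not increase covolume. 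Appending these $B'_{X_w}$ to the terminal vertices of $\mathcal{K}$ produces a new $\mathcal{H}$--structure, still with covolume $\le\covol(\mathcal{K})$, whose attached complexes now have the property that every cutpoint has non-slender stabiliser.

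Next I would apply the resolution machinery to each remaining complex $X$. Since $G$ admits no $D_\infty$--quotients, \thref{res:con_res_defn} guarantees that the second hypothesis of \thref{res:Dun_Del_Pot_res} holds automatically, so (after restricting the $K$-action to the relevant vertex group, which fixes a point of $\hat T$ or acts linearly because the cell stabilisers are slender or $\mathcal{H}$--elliptic) we obtain a resolution $\rho\colon X\to\hat T$. Here the argument branches on whether $\rho$ is a splitting resolution or a contracting one. For a contracting resolution, first replace $X$ by $X_C$ as in \thref{res:con_res_stabs}: this does not increase covolume, keeps the cell stabilisers slender or elliptic in $T$, and turns $\rho$ into an honest resolution with every edge midpoint landing in $T$, i.e. a splitting resolution. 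So in either case we reduce to a splitting resolution $\rho\colon X\to T$. Then form $X_T$ as in the discussion preceding \thref{res:split_res_sc}; by \thref{res:split_res_sc} (whose cutpoint hypothesis is exactly what we arranged above) $X_T$ is connected with $H^1(X_T,\mathbb{Z}_2)=0$, and $\covol(X_T)\le\covol(X)$.

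The point of the splitting resolution is that $X_T$ carries a map to $T$ whose vertex-preimages give the pieces living over the vertices of $T$. Concretely, passing to $B'_{X_T}$ (part~\ref{pt:cutpoint_elliptic} of \thref{res:cutpoint_split}) produces a $\mathcal{H}$--structure whose non-slender vertex groups are elliptic in $T$, again without increasing covolume. At this stage every terminal complex-stabiliser is either slender or fixes a vertex of $T$, so the hypotheses of \thref{res:passdown_complex} are met: that lemma (together with \thref{res:passdown_hierarchy}) pushes the whole $\mathcal{H}$--structure down to a $\mathcal{H}$--structure $\mathcal{K}_v$ on each vertex group $G_v$ of $T$, with $\sum_i\covol(\mathcal{K}_{v_i})=\covol(\text{the intermediate structure})$. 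Chaining the inequalities through all the modifications gives $\sum_i\covol(\mathcal{K}_{v_i})\le\covol(\mathcal{K})$, as required.

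The main obstacle I expect is bookkeeping rather than a single hard idea: one must be careful that each terminal complex of $\mathcal{K}$ is handled equivariantly and that the successive reductions (cutpoint tree, then $X_C$ or $X_T$, then cutpoint tree again) are compatible with the $G$-action on the ambient hierarchy $\mathcal{H}$ and with the depth constraints built into the definition of a $\mathcal{H}$--structure, so that appending the new depth-$1$ pieces genuinely yields a $\mathcal{H}$--structure and not merely a collection of complexes. A subtler point is verifying that the resolution hypotheses of \thref{res:Dun_Del_Pot_res} really are available for the vertex-group actions one feeds in --- i.e. that after restricting to the relevant subgroup the cell stabilisers still act elliptically, linearly or parabolically on $T$ --- which uses that slender groups cannot act parabolically or hyperbolically and that $\mathcal{H}$--elliptic stabilisers, being non-slender, are confined to a single vertex at each level and hence act elliptically. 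Once these compatibility checks are in place, the covolume estimate is just the concatenation of the individual $\le$ and $=$ statements already proved.
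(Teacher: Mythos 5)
Your route is essentially the paper's: remove the parts of the complexes mapping into $\partial T$ by the contraction $X \mapsto X_C$, split at cutpoints, collapse the tracks of the resulting splitting resolution to form $X_T$, pass to the reduced cutpoint tree $B'_{X_T}$, and finish with \thref{res:passdown_complex}. There is, however, a genuine gap in the order in which you perform these steps. You split over cutpoints \emph{before} the contraction, and then invoke \thref{res:split_res_sc} asserting that its cutpoint hypothesis ``is exactly what we arranged above''. It is not: by \thref{res:con_res_stabs} the new vertices of $X_C$ (the images of the collapsed components of $\rho^{-1}(\partial T)$) have stabilisers which are only guaranteed to fix a \emph{line} of $T$, and such a group need not fix any point of $T$. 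If one of these new vertices is a cutpoint of $X_C$ --- which can certainly happen --- then the hypothesis of \thref{res:split_res_sc} fails, and the complement in $X_C$ of the preimage of $\partial T$ may be disconnected, so $X_T$ need not be a connected $\mathcal{H}$--complex. Your second application of the cutpoint construction, to $B'_{X_T}$, comes too late to repair this, since forming $X_T$ already requires the hypothesis.

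The fix is the paper's ordering: first replace every contracting resolution by a splitting one via $X \mapsto X_C$ (yielding $\mathcal{K}'$), \emph{then} split each complex over its cutpoints via the reduced cutpoint tree (yielding $\mathcal{K}''$, in which every remaining cutpoint has a non-slender, hence $\mathcal{H}$--elliptic, hence $T$--elliptic stabiliser), and only then form $X_T$ and $B'_{X_T}$ and apply \thref{res:passdown_complex}. With that correction the remainder of your argument --- the existence of the resolution via \thref{res:con_res_defn} and the absence of $D_\infty$--quotients, the chain of covolume inequalities, and the final equality from \thref{res:passdown_complex} --- coincides with the paper's proof.
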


\begin{proof}
In light of \thref{res:passdown_complex} it suffices to show that there is another $\mathcal{H}$--structure $\tilde{\mathcal{K}}$ with terminal vertices which are elliptic in $T$ and with $\covol(\tilde{\mathcal{K}}) \leq \covol(\mathcal{K})$.

Begin by considering the resolution of each complex associated to the terminal vertices of $\mathcal{K}$. We define $\mathcal{K}'$ to be the same as $\mathcal{K}$ except whenever a complex $X$ associated to a terminal vertex has an edge mapped into $\partial T$ we replace it with $X_C$ defined above. Recall from \thref{res:con_res_stabs} that $X_C$ is a $\mathcal{H}$--complex and $\covol(X_C) \leq \covol(X)$. Thus $\mathcal{K}'$ is a $\mathcal{H}$--structure with $\covol(\mathcal{K}') \leq \covol(\mathcal{K})$. Moreover the resolutions from each complex are now all splitting resolutions.

Before collapsing tracks it is first necessary to split the complexes over cutpoints. (Otherwise our $X_T$ may not be connected.) Following the procedure from \thref{res:cutpoint_split} for each complex of $\mathcal{K}'$ containing cutpoints we construct a new $\mathcal{H}$--structure $\mathcal{K}''$ where all the cutpoints in the complexes have $\mathcal{H}$--elliptic stabilisers. Moreover the resolutions still map each edge of the complexes to a point of $T$ and $\covol(\mathcal{K}'') \leq \covol(\mathcal{K}')$.

Finally for each complex $X$ of $\mathcal{K}''$ we consider $X_T$ as defined above. Recall from \thref{res:cutpoint_split} that each non-slender vertex of $B'_{X_T}$ acts elliptically on $T$. Thus we get a new $\mathcal{H}$--structure $\tilde{\mathcal{K}}$ by replacing each terminal vertex of $\mathcal{K}''$ with the corresponding $B'_{X_T}$. This $\tilde{\mathcal{K}}$ has all the properties we require. $\square$
\end{proof}

\begin{remark}\thlabel{res:triangle_passdown}
Note that there is a natural partial map from the set of triangles in the complexes of $\mathcal{K}$ and those in $\coprod_{v \in T} \mathcal{K}_v$. Moreover this map is $G$--equivarient and is both total and bijective if $\sum_i \covol(\mathcal{K}_{v_i}) = \covol(\mathcal{K})$ where $\left\lbrace v_i \right\rbrace$ is a set of representatives for the orbits of vertices of $T$. An understanding of this map will be crucial for Section \ref{sec:find_trees}.
\end{remark}

\section{Extracting Trees from Complexes}\label{sec:find_trees}

Now with \thref{res:passdown_full} in hand we are ready to start the proof of \thref{res:main}. Let $v_0$ be the initial vertex of $\mathcal{H}$. Start by letting $\mathcal{K}_{v_0}$ be any $\mathcal{H}$--structure for $G$ with finite covolume. (Recall that for a finitely presented group we can take $\mathcal{K}_{v_0}$ to have trivial tree structure and have $G$ act freely on a cocompct ($2$--dimensional) simply connected simplical complex.) Now we recursively define $\mathcal{K}_w$ for each vertex $w \in \mathcal{H}$. Suppose $w'$ is the immediate ancestor of $w$ and $\mathcal{K}_{w'}$ is already defined. We now define $\mathcal{K}_w$ from \thref{res:passdown_full} by setting $\mathcal{K}$ to be $\mathcal{K}_{w'}$ and $T$ to be $T_{w'}$.

Let $\mathcal{T}^n$ be the set of all the triangles in all the complexes acted on by the terminal vertices of $\mathcal{K}_w$ where $w \in \mathcal{H}^n$. Note that $G$ naturally acts on $\mathcal{T}^n$ with finitely many orbits of triangles; call this number $\covol(\mathcal{T}^n)$. Moreover the inequality of covolumes in \thref{res:passdown_full} extends to an inequality $\covol(\mathcal{T}^{n+1}) \leq \covol(\mathcal{T}^n)$ for all $n$. Thus $\covol(\mathcal{T}^n)$ must eventually reach some minimum. Pick $N_{\Delta}$ so that $\covol(\mathcal{T}^{N_{\Delta}}) = \covol(\mathcal{T}^n)$ for any $n \geq N_{\Delta}$.

Recall from \thref{res:triangle_passdown} that for $n \geq N_{\Delta}$ we can always pass a triangle to the next level of $\mathcal{H}$. More precisely our construction above actually induces a $G$--equivariant bijective map $\tau_{n,n+1} : \mathcal{T}^n \rightarrow \mathcal{T}^{n+1}$. Moreover let $\tau_{n,m} : \mathcal{T}^n \rightarrow \mathcal{T}^m$ be the composition $\tau_{m-1,m} \circ \cdots \circ \tau_{n,n+1}$.

A \emph{pair} in $\mathcal{T}^n$ is an element $(t,t') \in \mathcal{T}^n \times \mathcal{T}^n$ where $t$ and $t'$ are distinct triangles in the same complex and which share a common edge $e$. A pair is called \emph{stable} if it descends to a pair under any $\tau_{n,m}$ where $m>n$. Let $P(\mathcal{T}^n)$ be the set of stable pairs in $\mathcal{T}^n$.

We now define an equivalence relation $\sim_n$ on $\mathcal{T}^n$ to be the one generated by its stable pairs. Note that for each equivalence class of $\sim_n$ we naturally get a connected subcomplex (of some $\mathcal{H}$--complex which is associated to a terminal vertex of $\mathcal{K}_v$ for some $v \in \mathcal{H}^n$) consisting of all the triangles in the class together with all their subsimplices.

We now restrict our attention to a single complex $X_w$ associated to a terminal vertex $w \in\mathcal{K}_v$ for some $v \in \mathcal{H}^n$ with $n \geq N_{\Delta}$. We define a bipartite graph $B_w$ for each $X_w$ as follows. One set of vertices will be the set of subcomplexes associated to the equivalence classes of $\sim_n$ which are contained in $X_w$; the other will be the edges of $X_w$ which are contained in more than one of said subcomplexes. The edges of $B_w$ are defined by inclusion in the obvious way.

Observe that $G_w$ acts naturally on $B_w$. By definition the stabilisers for the subcomplexes associated to the equivalence classes of $\sim_n$ are $\mathcal{H}$--elliptic. If every edge of $X_w$ has slender stabiliser (such as in the case for hyperbolic groups) then the stabilisers of each edge of $B_w$ are slender. So if $B_w$ is a tree for all large enough $n$ then this proves \thref{res:main} by adding the $B_w$ to the bottom layers of the $\mathcal{K}_v$. (Where $N$ in the statement of \thref{res:main} is the first level where this occurs and the corresponding $C$ is the maximal depth of one of the $\mathcal{K}_v$ where $v$ has depth $N$ in $\mathcal{H}$.) If some edge of $X_w$ has a non-slender ($\mathcal{H}$--elliptic) stabiliser then we instead first have to collapse each edge of $B_w$ with non-slender stabilser to get a new graph $B'_w$. \thref{res:main} will then follow as before.

We shall now work backwards finding a series of sufficient conditions for $B_w$ to be a tree until we arrive at one which we can show is true for large $n$. First observe that this is true if we can show that, for far enough down the hierarchy, whenever $(t,t')$ is an unstable pair with common edge $e$ that  $t$ and $t'$ lie in different connected components of $X_w \backslash e$.  We now need a definition.

\begin{defn}
Let $D$ be a triangulated disk with exactly one interior vertex. A \emph{cone} $C \subseteq X$ is the image of some simplicial map $\alpha: D \rightarrow X$ which sends triangles to triangles. A cone is said to be \emph{simple} if the image of $\partial \alpha : \partial S^1 \rightarrow X$ is a simple loop; equivalently $\alpha$ is injective.
\end{defn}

\begin{lemma}\thlabel{res:cone_criterion}\cite[Lemma 8.5]{LouderTouikan_strong}
If every simple cone of $X_w$ is contained in an equivalence class, then $B_w$ is a tree.
\end{lemma}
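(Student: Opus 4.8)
The plan is to show the contrapositive statement in a controlled form: if $B_w$ fails to be a tree, then it contains an embedded cycle, and such a cycle gives rise to a simple cone of $X_w$ which is not contained in a single $\sim_n$-equivalence class. So first I would unwind what a cycle in $B_w$ means combinatorially. Since $B_w$ is bipartite with one part the $\sim_n$-classes (connected subcomplexes $C_1, \dots, C_k$) and the other part the edges $e_1, \dots, e_k$ of $X_w$ lying in more than one class, a reduced cycle of length $2k$ alternates $C_1, e_1, C_2, e_2, \dots, C_k, e_k, C_1$, where $e_i$ is an edge common to $C_i$ and $C_{i+1}$ (indices mod $k$), and all the $C_i$ are pairwise distinct, all the $e_i$ pairwise distinct. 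The case $k=1$ (a single class containing an edge ``twice'', i.e.\ a loop in $B_w$) cannot happen since $B_w$ is simple by construction; the genuinely dangerous case is $k \geq 2$.

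Next I would use connectedness of each $C_i$ to pick, inside $C_i$, a simplicial path $\gamma_i$ of triangles running from the edge $e_{i-1}$ to the edge $e_i$; concatenating the $\gamma_i$ around the cycle yields a closed ``annular'' chain of triangles in $X_w$ whose consecutive triangles share edges, and which crosses each $e_i$ exactly when passing from $C_i$ to $C_{i+1}$. The key point is that because the $e_i$ are distinct and belong to distinct classes, no single stable pair straddles any $e_i$ — that is exactly the content of $e_i$ being a vertex of $B_w$ rather than an interior edge of a class. So this closed chain of triangles is \emph{not} contained in one $\sim_n$-class: it meets at least two classes, and in fact crosses between classes at each $e_i$. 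Now I would extract from this closed chain of triangles a simple cone. The idea: a closed chain of triangles around a loop, after possibly passing to an innermost subloop (to kill any self-intersections of the boundary, i.e.\ to make the boundary loop simple), bounds a disk region; coning this region — or rather, recognising that the combinatorial structure of ``triangles fanning around'' forces the presence of an interior vertex whose star is a triangulated disk mapping into $X_w$ — produces a simple cone $C$. One must check that $C$ still crosses an $e_i$, hence is not inside any single class; this is where care is needed, since passing to an innermost subloop could in principle retreat entirely into one class.

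The main obstacle I expect is precisely this last extraction step: turning a homotopically nontrivial (or merely ``separating-obstructing'') closed chain of triangles into an honest \emph{simple} cone that still witnesses the failure of containment in a class. The naive approach of taking the disk bounded by an innermost simple subloop risks landing inside a single $C_i$; one has to argue that among all such subloops, one can be chosen that still straddles some $e_i$ — or alternatively argue directly that if every simple cone were class-contained, then the relation ``$t \sim_n t'$'' would be forced across every $e_i$ by a chain of cone-moves, contradicting that $e_i$ separates classes in $B_w$. I would lean on the hypothesis $H^1(X_w, \mathbb{Z}_2) = 0$ here (inherited from the $\mathcal{H}$-complex condition and preserved by reductions, cf.\ \thref{res:split_res_sc}): a closed chain of triangles, read as a $\mathbb{Z}_2$-$2$-chain with boundary the cycle, together with vanishing first cohomology, lets one fill and decompose the region into elementary pieces (stars of interior vertices = cones), and the bipartite cycle structure guarantees at least one such piece is a simple cone crossing an $e_i$. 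Assembling these observations gives: no simple-cone obstruction $\Rightarrow$ no bad cycle in $B_w$ $\Rightarrow$ $B_w$ is a tree, which is the claim.
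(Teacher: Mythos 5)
Your overall strategy---convert a failure of treeness into a loop, fill it with a disk using $H^1(X_w,\mathbb{Z}_2)=0$, decompose the filling into cones, and play the pieces off against the hypothesis---is the right skeleton, and it parallels the paper's argument (which, rather than working with a cycle in $B_w$ directly, reduces to the local statement recorded just before the lemma: it suffices that any pair $(t,t')$ whose common edge $e$ is not a cut edge of $X_w$ satisfies $t\sim t'$). But the step you yourself flag as the ``main obstacle'' is where essentially all of the content lives, and your proposal does not fill it. Two ingredients are missing. First, an arbitrary simplicial filling of your closed chain of triangles has no useful structure: the paper must choose the path $\gamma$, the disk $D$ and the map $\rho\colon D\to X_w$ to lexicographically minimise the number of triangles and then maximise the length of $\partial D$, and only the resulting optimality properties (connectedness of $\rho^{-1}(e)$, every boundary edge lying in the link of a preimage of $u$ or $v$, no other interior vertices in the link of $\partial D$) force $D$ to be a chain of cones centred alternately at preimages of the two endpoints of the offending edge. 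Second, those cones need not be simple, and the hypothesis applies only to simple cones; your worry that passing to an innermost subloop ``could retreat entirely into one class'' is exactly the problem, and the fix is \thref{res:make_cones_simple_again}, which produces a \emph{simple} subcone still containing the two designated consecutive triangles $t_{i-1},t_i$ (because the boundary is locally injective at the vertex between their outer edges). Without both ingredients the argument does not close.

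A secondary issue is the target of your extraction step. You aim to exhibit one simple cone that ``crosses an $e_i$'', i.e.\ straddles two classes; the decomposition does not yield such a cone and none is needed. The contradiction is obtained the other way around: assuming every simple cone lies in a class, consecutive triangles of the cone chain are pairwise equivalent, so chaining across the whole disk forces $t\sim t'$ for a pair $(t,t')$ that by construction lies in two distinct classes sharing the edge $e$. You gesture at this alternative in your last sentence, but ``forced by a chain of cone-moves'' is precisely the assertion that requires the optimal-disk analysis and \thref{res:make_cones_simple_again} to justify, so as written the proposal assumes what it needs to prove.
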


Before proving this we require a simple proposition.

\begin{prop}\thlabel{res:make_cones_simple_again}
Let $\gamma$ be the boundary of some cone $C$. Suppose $e = [u,v]$ and $e' = [v,w]$ are consecutive edges of $\gamma$. If $u \neq w$ (so $\gamma$ is locally injective at vertex $v$) then there a simple subcone $C' \subseteq C$ containing both $e$ and $e'$.
\end{prop}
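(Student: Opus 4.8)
The plan is to induct on the number of triangles in the cone $C$. A cone $C$ is the image of a triangulated disk $D$ with one interior vertex, say $x_0$, mapping to some vertex $p \in X$; the triangles of $D$ are arranged cyclically around $x_0$, and the boundary $\gamma = \partial\alpha(\partial S^1)$ traverses the ``outer'' edges of these triangles in order. If $\alpha$ is already injective then $C$ itself is simple and we are done (it contains $e$ and $e'$ since these are edges of $\gamma$). So assume $\alpha$ is not injective. The key observation is that since $\gamma$ is locally injective at $v$ (because $u\neq w$), the failure of injectivity must occur elsewhere: there are two boundary vertices $a\neq b$ of $D$ with $\alpha(a)=\alpha(b)$, and we may choose them so that $a,b$ are distinct from the vertex of $D$ mapping to $v$ and so that the arc of $\partial D$ strictly between them (on one side) contains that vertex, i.e. contains both $e$ and $e'$.

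First I would make the identification precise: the boundary arc of $D$ running from $a$ to $b$ \emph{the other way} (the side not containing the preimage of $v$) together with the two cone-edges $[x_0,a]$ and $[x_0,b]$ bounds a sub-triangulated-disk $D''$ of $D$, and its complementary sub-disk $D'$ — consisting of $x_0$, the arc from $a$ to $b$ passing through the preimage of $v$, and the cone edges $[x_0,a]$, $[x_0,b]$ — is again a triangulated disk with the single interior vertex $x_0$. Hence $C' := \alpha(D')$ is a cone, it contains both $e$ and $e'$ (they lie on the retained boundary arc), and $D'$ has strictly fewer triangles than $D$ because $D''$ contains at least one triangle. Crucially, $\alpha|_{D'}$ is still locally injective at the preimage of $v$ since we have only deleted triangles away from $v$ and $u\neq w$ still holds. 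Now apply the induction hypothesis to $C'$ to obtain a simple subcone of $C'$ — hence of $C$ — containing $e$ and $e'$. The base case is a cone with a single triangle, which is automatically simple.

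The step I expect to be the main obstacle is the surgery description: one must check carefully that after cutting out the ``bad'' sub-disk $D''$ along a path joining two identified boundary points, what remains is genuinely a triangulated disk with exactly one interior vertex — in particular that $x_0$ remains interior and that no new interior vertices are created, and that the boundary edges $e=[u,v]$ and $e'=[v,w]$ survive with $v$ still a locally injective vertex of the new boundary. There is also a small bookkeeping point: a priori the identification $\alpha(a)=\alpha(b)$ could be accompanied by edge or face identifications that make ``cutting along the cone edges'' ambiguous, so one should choose $a,b$ to be an innermost such pair (minimizing the number of triangles of $D$ on the side away from $v$), which guarantees that the boundary of $D''$ is embedded except at the single point $a=b$ and the cut is clean. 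Once that combinatorial picture is set up the induction is routine.
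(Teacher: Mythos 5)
Your argument is correct and essentially identical to the paper's: both find two boundary points with the same image, pass to the subcone bounded by the arc containing $e$ and $e'$, and terminate by induction on (equivalently, descent of) the number of triangles. The extra care you take about choosing an innermost identified pair and keeping $v$ locally injective is a reasonable tidying of details the paper leaves implicit.
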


\begin{proof}
Suppose $\gamma: S^1 \rightarrow C$ is not simple. Then there are distinct $x_1,x_2 \in S^1$ which map to some common vertex $x \in C$. Let $A$ be an arc of $S^1$ which starts at $x_1$, finishes at $x_2$ and which contains $e$ and $e'$ as consecutive edges. Let $A'$ be the circle formed by taking $A$ and gluing its endpoints together. Then $\gamma |_{A'}$ is the boundary for a proper subcone of $C$ which contains $e$ and $e'$ as consecutive edges. Repeat this process until the resulting cone is simple, which must happen eventually as the area of the cone decreases at each step. $\square$
\end{proof}

\begin{proof}[Proof of \thref{res:cone_criterion}]
Let $(t,t')$ be pair in $X_w$ with common edge $e$. In order to prove the result it suffices to show that if $t$ and $t'$ are in the same connected component of $X_w \backslash e$ then $t \sim t'$. In this case $e = [u,v]$ is not a cut edge of $X_w$. Let $a$ and $b$ be the vertices of $t$ and $t'$ respectively that are not a part of $e$. Since $e$ is not a cut edge there is an edge path $\gamma$ (which we'll not assume is injective) from $a$ to $b$ which doesn't intersect $e$. Let $l$ be the loop consisting of $\gamma$ composed with $p = [a,u] \cup [u,b]$. Since $X$ is simply connected there is a simply connected simplical complex $D \subset \mathbb{R}^2$ together with a simplical map $\rho:D \rightarrow X$ with boundary $\partial\rho:\partial D \rightarrow l$. Note that $\rho$ is not required to be an embedding, even locally so. We will now assume that $\gamma$, $D$ and $\rho$ as above are chosen to lexicographically optimise the following quantities for which $D$ is homeomorphic to a disc. ($D$ is always homeomorphic to a disc if $\gamma$ is injective; but this needn't be the case in general.)
\begin{itemize}
\item Minimises the number of triangles in $D$.
\item Maximises the length of $\partial D$.
\end{itemize}
Note the length of $\partial D$ is bounded above by three times the number of triangles of $D$. Thus we have a well ordering and so an optimal choice must exist. 

For such optimal choices we get the following properties.
\begin{itemize}
\item Since $\rho$ is a homeomorphism on each simplex we see that two disjoint components of $\rho^{-1}(e)$ must be separated by an edge path $\lambda$ in $D$. Hence $\rho^{-1}(e)$ is connected in an optimal choice as otherwise we can `cut across' $\lambda$ to get a new loop which bounds strictly less area. 

\begingroup%
  \makeatletter%
  \providecommand\color[2][]{%
    \errmessage{(Inkscape) Color is used for the text in Inkscape, but the package 'color.sty' is not loaded}%
    \renewcommand\color[2][]{}%
  }%
  \providecommand\transparent[1]{%
    \errmessage{(Inkscape) Transparency is used (non-zero) for the text in Inkscape, but the package 'transparent.sty' is not loaded}%
    \renewcommand\transparent[1]{}%
  }%
  \providecommand\rotatebox[2]{#2}%
  \newcommand*\fsize{\dimexpr\f@size pt\relax}%
  \newcommand*\lineheight[1]{\fontsize{\fsize}{#1\fsize}\selectfont}%
  \ifx\svgwidth\undefined%
    \setlength{\unitlength}{413.85826772bp}%
    \ifx\svgscale\undefined%
      \relax%
    \else%
      \setlength{\unitlength}{\unitlength * \real{\svgscale}}%
    \fi%
  \else%
    \setlength{\unitlength}{\svgwidth}%
  \fi%
  \global\let\svgwidth\undefined%
  \global\let\svgscale\undefined%
  \makeatother%
  \begin{picture}(1,0.26027397)%
    \lineheight{1}%
    \setlength\tabcolsep{0pt}%
    \put(0.03662987,0.3576232){\color[rgb]{0,0,0}\makebox(0,0)[lt]{\begin{minipage}{0.93116981\unitlength}\centering \end{minipage}}}%
    \put(0,0){\includegraphics[width=\unitlength,page=1]{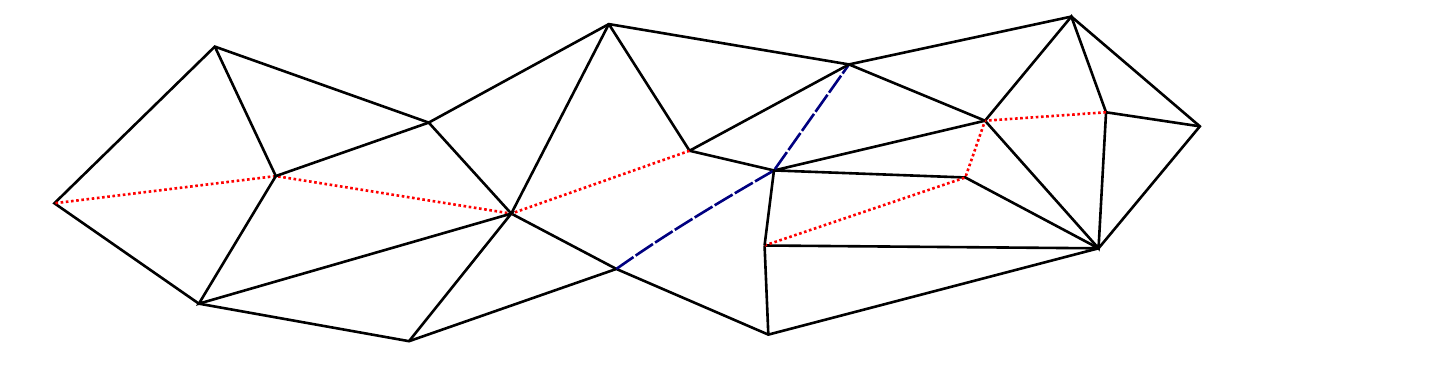}}%
    \put(0.13056803,0.14101337){\color[rgb]{1,0,0}\makebox(0,0)[t]{\lineheight{1.25}\smash{\begin{tabular}[t]{c}$\rho^{-1}(e)$\end{tabular}}}}%
    \put(0,0){\includegraphics[width=\unitlength,page=2]{PreimageConnected.pdf}}%
    \put(0.64200629,0.09982363){\color[rgb]{1,0,0}\makebox(0,0)[t]{\lineheight{1.25}\smash{\begin{tabular}[t]{c}$\rho^{-1}(e)$\end{tabular}}}}%
    \put(0.58284744,0.16932657){\color[rgb]{0,0,0.50196078}\makebox(0,0)[t]{\lineheight{1.25}\smash{\begin{tabular}[t]{c}$\lambda$\end{tabular}}}}%
  \end{picture}%
\endgroup%

\item Every edge $f \in \partial D$ must be in the link of a preimage of either $u$ or $v$. Otherwise we could remove $f$ and the unique triangle which contains $f$ to obtain a new loop which bounds strictly less area.

\begingroup%
  \makeatletter%
  \providecommand\color[2][]{%
    \errmessage{(Inkscape) Color is used for the text in Inkscape, but the package 'color.sty' is not loaded}%
    \renewcommand\color[2][]{}%
  }%
  \providecommand\transparent[1]{%
    \errmessage{(Inkscape) Transparency is used (non-zero) for the text in Inkscape, but the package 'transparent.sty' is not loaded}%
    \renewcommand\transparent[1]{}%
  }%
  \providecommand\rotatebox[2]{#2}%
  \newcommand*\fsize{\dimexpr\f@size pt\relax}%
  \newcommand*\lineheight[1]{\fontsize{\fsize}{#1\fsize}\selectfont}%
  \ifx\svgwidth\undefined%
    \setlength{\unitlength}{413.85826772bp}%
    \ifx\svgscale\undefined%
      \relax%
    \else%
      \setlength{\unitlength}{\unitlength * \real{\svgscale}}%
    \fi%
  \else%
    \setlength{\unitlength}{\svgwidth}%
  \fi%
  \global\let\svgwidth\undefined%
  \global\let\svgscale\undefined%
  \makeatother%
  \begin{picture}(1,0.26027397)%
    \lineheight{1}%
    \setlength\tabcolsep{0pt}%
    \put(0.03662987,0.3576232){\color[rgb]{0,0,0}\makebox(0,0)[lt]{\begin{minipage}{0.93116981\unitlength}\centering \end{minipage}}}%
    \put(0,0){\includegraphics[width=\unitlength,page=1]{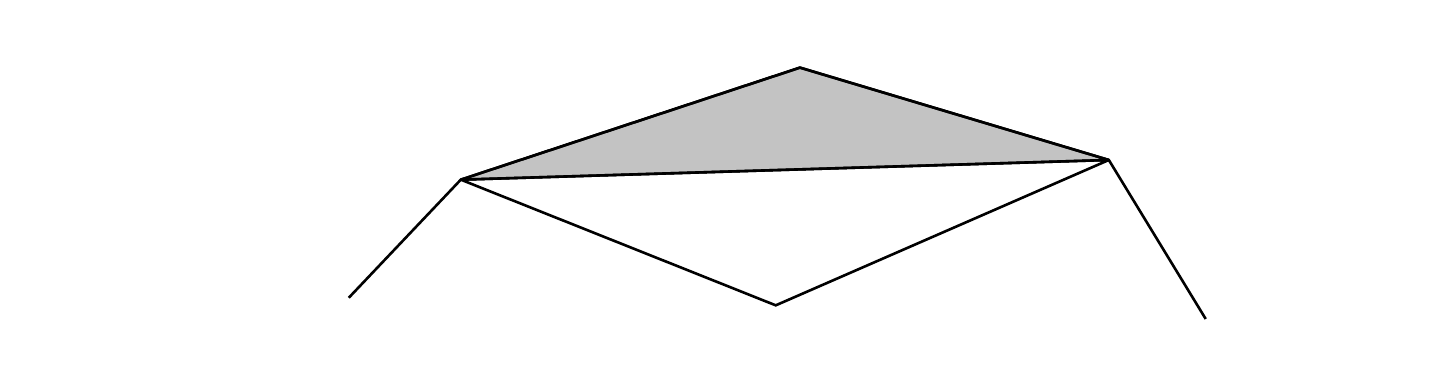}}%
    \put(0.23777941,0.18699479){\color[rgb]{0,0,0}\makebox(0,0)[t]{\lineheight{1.25}\smash{\begin{tabular}[t]{c}Delete this\\triangle\end{tabular}}}}%
    \put(0,0){\includegraphics[width=\unitlength,page=2]{RemoveTriangle.pdf}}%
    \put(0.6655229,0.19002042){\color[rgb]{0,0,0.50196078}\makebox(0,0)[t]{\lineheight{1.25}\smash{\begin{tabular}[t]{c}$f$\end{tabular}}}}%
  \end{picture}%
\endgroup%

\item The only non-boundary vertices in the link of a vertex $w'$ of $\partial D$ are preimages of $u$ and $v$. Otherwise we could make $\partial D$ longer by adding two copies of an interior edge of $D$ to $\partial D$. 

\begingroup%
  \makeatletter%
  \providecommand\color[2][]{%
    \errmessage{(Inkscape) Color is used for the text in Inkscape, but the package 'color.sty' is not loaded}%
    \renewcommand\color[2][]{}%
  }%
  \providecommand\transparent[1]{%
    \errmessage{(Inkscape) Transparency is used (non-zero) for the text in Inkscape, but the package 'transparent.sty' is not loaded}%
    \renewcommand\transparent[1]{}%
  }%
  \providecommand\rotatebox[2]{#2}%
  \newcommand*\fsize{\dimexpr\f@size pt\relax}%
  \newcommand*\lineheight[1]{\fontsize{\fsize}{#1\fsize}\selectfont}%
  \ifx\svgwidth\undefined%
    \setlength{\unitlength}{413.85826772bp}%
    \ifx\svgscale\undefined%
      \relax%
    \else%
      \setlength{\unitlength}{\unitlength * \real{\svgscale}}%
    \fi%
  \else%
    \setlength{\unitlength}{\svgwidth}%
  \fi%
  \global\let\svgwidth\undefined%
  \global\let\svgscale\undefined%
  \makeatother%
  \begin{picture}(1,0.26027397)%
    \lineheight{1}%
    \setlength\tabcolsep{0pt}%
    \put(0.03662987,0.3576232){\color[rgb]{0,0,0}\makebox(0,0)[lt]{\begin{minipage}{0.93116981\unitlength}\centering \end{minipage}}}%
    \put(0,0){\includegraphics[width=\unitlength,page=1]{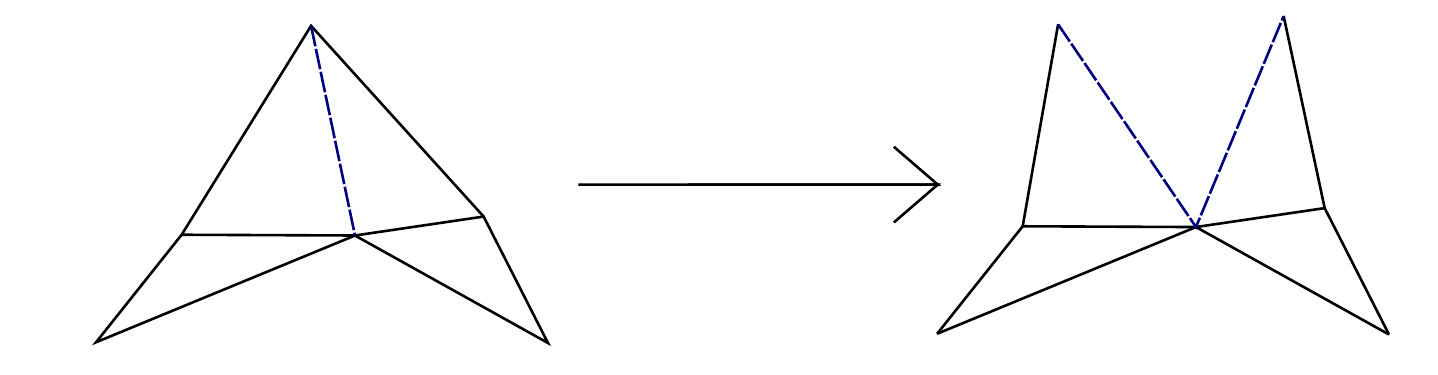}}%
  \end{picture}%
\endgroup%

%
%

\end{itemize}

Combining all of the above we see that $D$ must look like the following picture.

\begingroup%
  \makeatletter%
  \providecommand\color[2][]{%
    \errmessage{(Inkscape) Color is used for the text in Inkscape, but the package 'color.sty' is not loaded}%
    \renewcommand\color[2][]{}%
  }%
  \providecommand\transparent[1]{%
    \errmessage{(Inkscape) Transparency is used (non-zero) for the text in Inkscape, but the package 'transparent.sty' is not loaded}%
    \renewcommand\transparent[1]{}%
  }%
  \providecommand\rotatebox[2]{#2}%
  \newcommand*\fsize{\dimexpr\f@size pt\relax}%
  \newcommand*\lineheight[1]{\fontsize{\fsize}{#1\fsize}\selectfont}%
  \ifx\svgwidth\undefined%
    \setlength{\unitlength}{413.85826772bp}%
    \ifx\svgscale\undefined%
      \relax%
    \else%
      \setlength{\unitlength}{\unitlength * \real{\svgscale}}%
    \fi%
  \else%
    \setlength{\unitlength}{\svgwidth}%
  \fi%
  \global\let\svgwidth\undefined%
  \global\let\svgscale\undefined%
  \makeatother%
  \begin{picture}(1,0.26027397)%
    \lineheight{1}%
    \setlength\tabcolsep{0pt}%
    \put(0,0){\includegraphics[width=\unitlength,page=1]{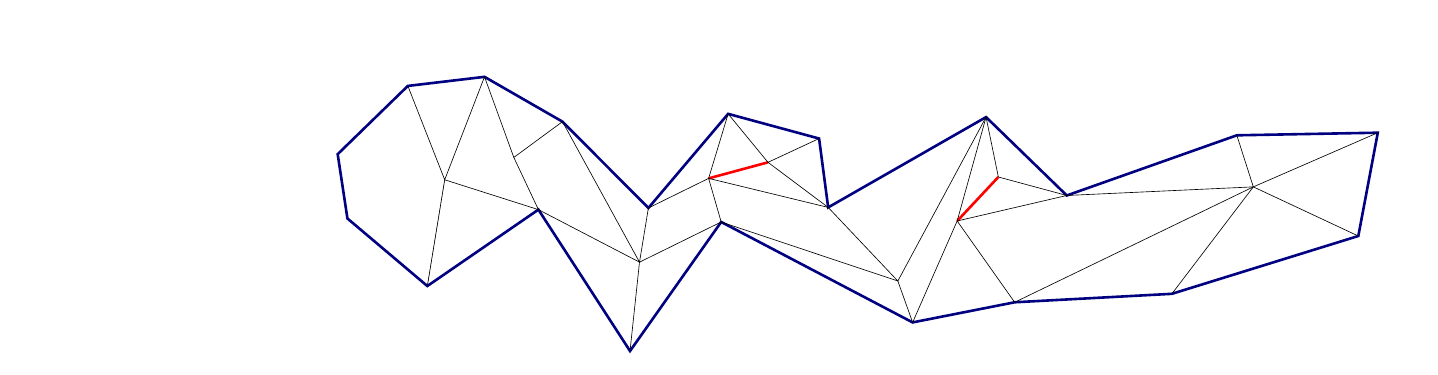}}%
    \put(0.72902972,0.08734742){\color[rgb]{1,0,0}\makebox(0,0)[t]{\lineheight{1.25}\smash{\begin{tabular}[t]{c}$\rho^{-1}(e)$\end{tabular}}}}%
    \put(0.90957129,0.17686856){\color[rgb]{0,0,0.50196078}\makebox(0,0)[t]{\lineheight{1.25}\smash{\begin{tabular}[t]{c}$\partial D$\end{tabular}}}}%
    \put(0,0){\includegraphics[width=\unitlength,page=2]{OptimalDisc.pdf}}%
    \put(0.12823179,0.14733904){\color[rgb]{0,0,0.50196078}\makebox(0,0)[t]{\lineheight{1.25}\smash{\begin{tabular}[t]{c}$\rho^{-1}(p_0) = \rho^{-1}(a)$\end{tabular}}}}%
    \put(0.26063548,0.03246407){\color[rgb]{0,0,0.50196078}\makebox(0,0)[t]{\lineheight{1.25}\smash{\begin{tabular}[t]{c}$\rho^{-1}(p_n) = \rho^{-1}(b)$\end{tabular}}}}%
    \put(0.19452974,0.09590628){\color[rgb]{0,0.50196078,0}\makebox(0,0)[t]{\lineheight{1.25}\smash{\begin{tabular}[t]{c}$\rho^{-1}(u)$\end{tabular}}}}%
    \put(0.34803876,0.22130396){\color[rgb]{0,0,0.50196078}\makebox(0,0)[t]{\lineheight{1.25}\smash{\begin{tabular}[t]{c}$\rho^{-1}(p_1)$\end{tabular}}}}%
    \put(0.18635035,0.08078032){\color[rgb]{0,0,0}\makebox(0,0)[lt]{\begin{minipage}{0.0329105\unitlength}\centering \end{minipage}}}%
    \put(0.03662987,0.3576232){\color[rgb]{0,0,0}\makebox(0,0)[lt]{\begin{minipage}{0.93116981\unitlength}\centering \end{minipage}}}%
    \put(0,0){\includegraphics[width=\unitlength,page=3]{OptimalDisc.pdf}}%
  \end{picture}%
\endgroup%

It follows that $\gamma$ can be decomposed into locally injective subpaths $\gamma_i$ between $p_{i-1}$ and $p_i$ with the following properties. ($1 \leq i \leq n$)
\begin{itemize}
\item Each $\gamma_i$ is contained in the link of either $u$ or $v$.
\item For each $i$ there is a triangle $t_i$ with vertices $p_i$, $u$ and $v$.
\end{itemize}
Thus for each $i$ there is a cone with central point either $u$ or $v$ containing both $t_{i-1}$ and $t_i$. Thus by \thref{res:make_cones_simple_again} either $t_{i-1} = t_i$ or there is a simple cone containing both $t_{i-1}$ and $t_i$. Thus by the assumption in the statement case we get $t_{i-1} \sim t_i$ for all $i$ and so $t = t_0 \sim t_n = t'$. $\square$
\end{proof}

Let $\sigma_{n,m}$ be the surjective map induced by $\tau_{n,m}$ on the equivalence classes of $\sim_n$ and $\sim_m$. Our goal shall be to show that, far enough down the hierarchy, this $\sigma_{n,m}$ is always a bijection. Then we will show that the corresponding subcomplexes are themselves rigid which will allow us to prove \thref{res:main}.

\begin{prop}\thlabel{res:same_class_implies_stable}
For $n > N_{\Delta}$ every pair which is contained in the subcomplex associated to $\sim_n$ is a stable pair. 
\end{prop}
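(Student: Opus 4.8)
The plan is to promote the chain of stable pairs witnessing $t\sim_n t'$ into the single stable pair $(t,t')$, arguing by contradiction. So let $(t,t')$ be a pair with common edge $e$ and $t\sim_n t'$, and suppose $(t,t')$ is not stable. Since $\sim_n$ is generated by stable pairs, there is a chain of triangles $t=t_0,t_1,\dots,t_k=t'$ in the common $\sim_n$-class with each $(t_{i-1},t_i)$ a stable pair; as $(t,t')$ itself is not stable we must have $k\geq 2$. Because $n>N_\Delta$ every $\tau_{n,m}$ is a $G$-equivariant bijection on triangles and carries stable pairs to stable pairs (if $(\tau_{n,m'}(s),\tau_{n,m'}(s'))$ is a pair for all $m'>n$ then the same holds with $n$ replaced by any $m$ in the range). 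Hence at every level $m>n$ the triangles $\tau_{n,m}(t_0),\dots,\tau_{n,m}(t_k)$ form a path in which consecutive triangles share an edge and lie in a common complex, while by assumption there is some $m$ for which the end triangles $\tau_{n,m}(t_0)$ and $\tau_{n,m}(t_k)$ do not form a pair.

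The first key step is a monotonicity statement for a single passing-down step as constructed in \thref{res:passdown_full} (replace a terminal complex $X$ by $X_C$ if its resolution is contracting, split over cutpoints, collapse the tracks $\Lambda^*$ and reduce to obtain $X_T$, then pass to the reduced cutpoint tree): \emph{two surviving triangles that do not share an edge, or that lie in different complexes, cannot come to share an edge in a common complex}. This follows from \thref{res:edge_passdown}: the edge of a surviving triangle coming from a prescribed edge $f$ of its preimage is a single edge in the (possibly subdivided) image of $f$, so if the images of $\hat t,\hat t'$ share an edge then $\hat t$ and $\hat t'$ already shared one, and complexes are only ever split apart, never merged, by the construction. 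Consequently there is a well-defined \emph{first} level $m_0$ with $n<m_0\leq m$ at which the images of $t_0$ and $t_k$ cease to be a pair. Write $\hat t_i$ for the image of $t_i$ at level $m_0-1$: the $\hat t_i$ still form an edge-adjacent path in a common complex, $\hat t_0$ and $\hat t_k$ still share an edge $\bar e$, but after the single passing-down step to level $m_0$ they no longer do.

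Now analyze that step. Contracting resolutions and the cutpoint splittings only collapse subcomplexes or delete the portion of $\bar e$ mapping into $\partial T$; these cannot separate two triangles sharing the whole edge $\bar e$ unless the relevant stabiliser fixes a line of $T$ and is therefore slender, which contradicts the $\mathcal{H}$-ellipticity of the stabiliser of the $\sim_{m_0-1}$-class containing the $\hat t_i$, so they may be ignored. Thus the adjacency along $\bar e$ is destroyed only by collapsing (and reducing) some track $\lambda\in\Lambda^*$ which crosses the interior of $\bar e$ with $\hat t_0$ and $\hat t_k$ on opposite sides of $\lambda$ along $\bar e$; after collapsing $\lambda$ and collapsing the resulting bigons their images retain disjoint sub-edges of $\bar e$. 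But $\lambda$, being a collapsed track, separates the ambient complex into two infinite halves, and the path $\hat t_0,\hat t_1,\dots,\hat t_k$ is connected with its ends on opposite sides of $\lambda$; hence the path must cross $\lambda$, and a crossing occurs precisely along the edge shared by some consecutive pair $(\hat t_{j-1},\hat t_j)$. For that pair the same local analysis of the passing-down step shows the pair is destroyed at level $m_0$, so $(t_{j-1},t_j)$ is not stable — contradicting the choice of chain. This contradiction shows $(t,t')$ is stable.

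I expect the main obstacle to be the third paragraph: making precise, in the general non-manifold setting of $\mathcal{H}$-complexes (where tracks may branch at edges lying in three or more triangles and where the reduction step collapses bigons), exactly what it means for the two triangles on $\bar e$ to lie on "opposite sides" of the collapsed track $\lambda$, checking that the reduction does leave their images on disjoint sub-edges of $\bar e$, and verifying that for a consecutive chain pair the analogous picture either preserves adjacency or again exhibits an instability. This is where one genuinely needs the explicit description of $X_T$ from \thref{res:passdown_full}, the single-edge property of \thref{res:edge_passdown}, and the fact that a collapsed track really does disconnect the ambient complex. The strict inequality $n>N_\Delta$ is used only to ensure that every passing-down step relevant to the argument acts bijectively on triangles, so that all the maps $\tau_{\bullet,\bullet}$ above are genuine bijections.
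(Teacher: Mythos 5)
Your argument is correct and is essentially the paper's: the key point in both is that a separating track from the resolution cannot cross the chain of stable pairs joining $t$ to $t'$ inside their common equivalence class, and hence cannot destroy the adjacency of $(t,t')$ at any later level. The paper packages this more tersely --- the subcomplex associated to an equivalence class is cutpoint-free, so every track meets it either trivially or parallel to a vertex --- and the imprecision you flag about what ``opposite sides'' of a collapsed track means is present (indeed more so) in the paper's own one-line justification.
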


\begin{proof}
First note that a subcomplex $Y$ corresponding to an equivalence class must be cutpoint free as any two triangles it contains must be joined by a sequence of stable pairs. This means that the intersection of $Y$ and any track from the resolution $\rho$ is either trivial or parallel to a vertex of $Y$. $\square$

\begingroup%
  \makeatletter%
  \providecommand\color[2][]{%
    \errmessage{(Inkscape) Color is used for the text in Inkscape, but the package 'color.sty' is not loaded}%
    \renewcommand\color[2][]{}%
  }%
  \providecommand\transparent[1]{%
    \errmessage{(Inkscape) Transparency is used (non-zero) for the text in Inkscape, but the package 'transparent.sty' is not loaded}%
    \renewcommand\transparent[1]{}%
  }%
  \providecommand\rotatebox[2]{#2}%
  \newcommand*\fsize{\dimexpr\f@size pt\relax}%
  \newcommand*\lineheight[1]{\fontsize{\fsize}{#1\fsize}\selectfont}%
  \ifx\svgwidth\undefined%
    \setlength{\unitlength}{413.85826772bp}%
    \ifx\svgscale\undefined%
      \relax%
    \else%
      \setlength{\unitlength}{\unitlength * \real{\svgscale}}%
    \fi%
  \else%
    \setlength{\unitlength}{\svgwidth}%
  \fi%
  \global\let\svgwidth\undefined%
  \global\let\svgscale\undefined%
  \makeatother%
  \begin{picture}(1,0.26027397)%
    \lineheight{1}%
    \setlength\tabcolsep{0pt}%
    \put(0.03662987,0.3576232){\color[rgb]{0,0,0}\makebox(0,0)[lt]{\begin{minipage}{0.93116981\unitlength}\centering \end{minipage}}}%
    \put(0,0){\includegraphics[width=\unitlength,page=1]{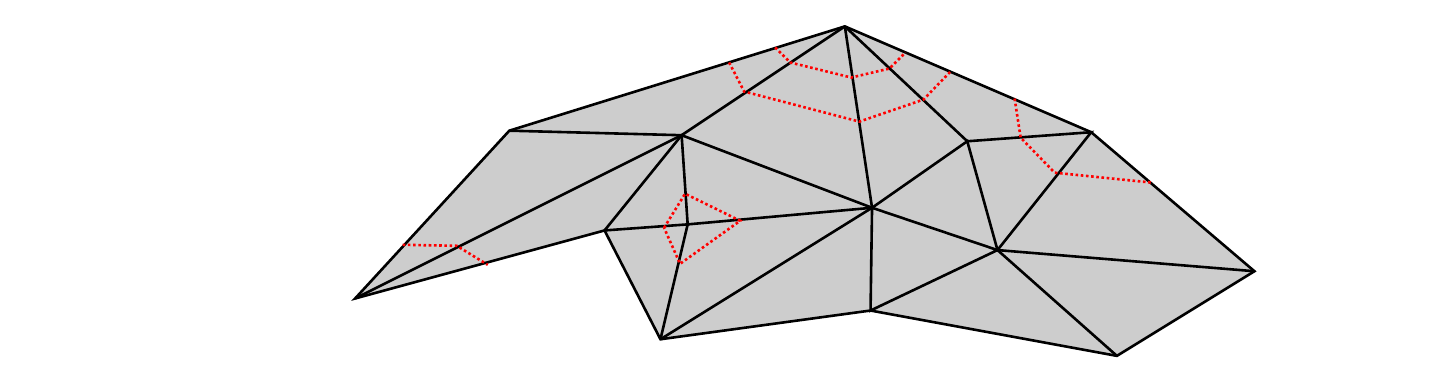}}%
  \end{picture}%
\endgroup%

\end{proof}

\begin{lemma}\thlabel{res:components_eventually_stable}
There exists $N' \geq N_{\Delta}$ such that $\sigma_{n,m}$ is a bijection whenever $n,m \geq N'$.
\end{lemma}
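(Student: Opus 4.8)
The plan is to extract a monotone quantity from the maps $\sigma_{n,m}$ on equivalence classes and argue that it must stabilise. The map $\tau_{n,n+1}:\mathcal{T}^n\to\mathcal{T}^{n+1}$ is a $G$-equivariant bijection for $n\geq N_\Delta$, so it carries the stable pairs of $\mathcal{T}^n$ \emph{into} the stable pairs of $\mathcal{T}^{n+1}$ (by definition a pair in $\mathcal{T}^n$ that descends to a pair under every $\tau_{n,m}$ certainly gives a pair in $\mathcal{T}^{n+1}$ that descends under every $\tau_{n+1,m}$). Consequently the equivalence relation $\sim_{n+1}$ is \emph{coarser} than the pushforward of $\sim_n$: whenever $t\sim_n t'$ we have $\tau_{n,n+1}(t)\sim_{n+1}\tau_{n,n+1}(t')$. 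This is exactly what makes $\sigma_{n,n+1}$ well-defined and surjective. First I would record this monotonicity explicitly.

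**The monotone invariant.**
Let $c_n$ be the number of $G$-orbits of equivalence classes of $\sim_n$ (equivalently, the number of $G$-orbits of the associated subcomplexes). Since $G$ acts on $\mathcal{T}^n$ with $\covol(\mathcal{T}^n)=\covol(\mathcal{T}^{N_\Delta})$ finitely many orbits for $n\geq N_\Delta$, and each class is a nonempty union of triangles, $c_n$ is a positive integer bounded above by $\covol(\mathcal{T}^{N_\Delta})$. Because $\sigma_{n,n+1}$ is $G$-equivariant and surjective, it induces a surjection on the (finite) orbit sets of classes, so $c_{n+1}\leq c_n$. Hence $c_n$ is a non-increasing sequence of positive integers and therefore eventually constant; pick $N'\geq N_\Delta$ with $c_n=c_{N'}$ for all $n\geq N'$.

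**From constant orbit count to bijectivity.**
For $n\geq N'$ the surjection of finite orbit sets induced by $\sigma_{n,n+1}$ is between sets of equal cardinality $c_n=c_{n+1}$, hence is a bijection on orbits. It remains to upgrade this to a genuine bijection $\sigma_{n,n+1}$ on classes (not just on orbits). Here I would use that $\tau_{n,n+1}$ is itself a $G$-equivariant bijection on triangles: if two classes $[t]_n$ and $[t']_n$ mapped to the same class $[\tau_{n,n+1}t]_{n+1}=[\tau_{n,n+1}t']_{n+1}$, then $\tau_{n,n+1}t$ and $\tau_{n,n+1}t'$ are joined by a chain of stable pairs in $\mathcal{T}^{n+1}$; pulling this chain back along the bijection $\tau_{n,n+1}$ (and using Proposition~\ref{res:same_class_implies_stable}, which guarantees that pairs inside a single class are stable, so that \emph{pairs pulled back from stable pairs are themselves stable} once $n>N_\Delta$) shows $t\sim_n t'$. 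Together with the orbit count, a standard finiteness argument — a $G$-equivariant surjection of $G$-sets with finitely many orbits that is bijective on orbit sets, whose restriction to each orbit is then automatically a bijection since point-stabilisers are preserved — gives that $\sigma_{n,n+1}$ is a bijection for $n\geq N'$, and composing yields $\sigma_{n,m}$ bijective for all $m\geq n\geq N'$.

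**Expected main obstacle.**
The routine part is the monotonicity $c_{n+1}\leq c_n$ and its stabilisation. The delicate point is the last step: showing that equality of orbit counts actually forces $\sigma_{n,m}$ to be \emph{injective} on the level of classes, which requires that stability of pairs is genuinely preserved under $\tau_{n,n+1}$ in both directions far enough down — i.e. that an unstable pair cannot become ``absorbed'' into a class at a later level without the whole class merging. This is where Proposition~\ref{res:same_class_implies_stable} (pairs inside a class are stable for $n>N_\Delta$) does the real work, and I expect the careful bookkeeping of stable versus unstable pairs under the bijections $\tau_{n,n+1}$ to be the crux of the proof.
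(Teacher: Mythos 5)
Your first step (the number of $G$--orbits of equivalence classes is non-increasing, hence eventually constant) is exactly Claim~1 of the paper's proof. But the second half of your argument has a genuine gap, and it is precisely the gap that the ACC hypothesis exists to fill — note that your proposal never invokes the ACC at all, which should be a warning sign, since the paper introduces that condition specifically because of this step.

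The problem is your claim that a $G$--equivariant surjection which is bijective on orbit sets is ``automatically a bijection since point-stabilisers are preserved.'' Point stabilisers need \emph{not} be preserved: a $G$--map $G/H \to G/H'$ with $H \lneq H'$ is surjective, sends one orbit to one orbit, and is not injective. Concretely, the class $Y^j_n$ can map onto a strictly larger class $Y^j_{n+1}$ whose stabiliser properly contains that of $Y^j_n$, by gluing onto a conjugate $gY^j_n$ of itself. Your attempted repair — pulling a chain of stable pairs at level $n+1$ back along $\tau_{n,n+1}^{-1}$ — also fails, because two triangles sharing an edge at level $n+1$ need not share an edge at level $n$ (adjacency can be created when tracks are collapsed), so the pulled-back chain need not consist of pairs at all. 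That pullback statement is exactly \thref{res:pair_pullback}, which is proved \emph{after} this lemma and itself relies on the ACC; Proposition~\ref{res:same_class_implies_stable} only says that pairs already inside a class are stable, not that stable pairs pull back. The paper's route is different: after stabilising the orbit counts of classes (Claim~1) and of edges within each class (Claim~2), it shows (Claim~3) that any failure of injectivity of $\sigma_{n,n+1}$ produces an edge $e$ with $\stab^{+}_{Y^j_n}(e) \lneq \stab^{+}_{Y^j_{n+1}}(\tau_{n,n+1}(e))$, and then the ACC forbids such strict increases from occurring infinitely often. Without that stabiliser-growth argument and the ACC, the lemma cannot be proved.
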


\begin{proof}
We shall proceed by proving the following three claims about the structure of the classes of $\sim_n$.

\begin{proof}[Claim 1]
There is some $N_1 \geq N_{\Delta}$ such that number of orbits of equivalence classes of $\sim_n$ and $\sim_m$ are equal whenever $n,m \geq N_1$.
\end{proof}

\begin{proof}[Proof of Claim 1]
Since $\tau_{n,m}$ induces a surjective equivariant map on the equivalence classes we see that the number of $G$-orbits of $G$ classes is non-increasing, hence must be eventually constant. $\square$
\end{proof}

Let $Y^1_n, \cdots , Y^{J}_n$ be the associated subcomplexes to a set of representatives for the orbits of $\sim_{n}$ and w.l.o.g. we can assume that $Y^j_n$ maps into $Y^{j}_{n+1}$.

\begin{proof}[Claim 2]
There is some $N_2 \geq N_1$ such that the number of orbits of edges in each $Y^j_n$ is constant for $n \geq N_2$.
\end{proof}

\begin{proof}[Proof of Claim 2]
Recall from \thref{res:edge_passdown} that for any given triangle there is a natural correspondence between its edges at any given level. Thus the only way to increase the number of edges is if two triangles are adjacent on one level but then not on a later one. This contradicts \thref{res:same_class_implies_stable}. $\square$
\end{proof}

The proof of Claim~2 also means we can meaningfully talk about the image of an edge under $\tau_{n,m}$ as long as we restrict our attention to a single equivalence class.

\begin{proof}[Claim 3]\cite[Lemma 8.2]{LouderTouikan_strong}
For $n > N_2$ if $\sigma_{n,n+1}$ isn't a bijection then there is some $j$ and an edge $e \subset Y^j_n$ such that 
\begin{equation*}
\stab^{+}_{Y^j_n}(e) \lneq \stab^{+}_{Y^j_{n+1}}(\tau_{n,n+1}(e))
\end{equation*}
\end{proof}

\begin{proof}[Proof of Claim 3] 
Claim 1 implies that $Y^j_n$ must join onto a conjugate of itself under $\tau_{n,n+1}$. Claim 2 implies that we have an edge $e \subset Y^j_n$ and a $g \in G \:\backslash\: \stab(Y^j_{n+1})$ such that $\tau(e) = \tau(ge)$. We thus have $g \in \stab^{+}_{Y^j_{n+1}}(\tau_{n,n+1}(e)) \:\backslash\: \stab^{+}_{Y^j_n}(e)$. $\square$
\end{proof}

We are now ready to show that $\sigma_{n,n+1}$ is a bijection for all sufficiently large $n$. Suppose this isn't the case; then since there are only finitely many orbits of edges in each $Y^j_n$ Claim~3 now implies that there is some $j$ and some subsequence $\left\lbrace n_{i_k} \right\rbrace$ of $\left\lbrace n_i \right\rbrace$ and some edge $e \in Y^j_{N_2}$ such that
\begin{equation*}
\stab^{+}_{Y^j_{n_{i_1}}}(e_{n_{i_1}}) \:\: \lneq \:\: \stab^{+}_{Y^j_{n_{i_2}}}(e_{n_{i_2}}) \:\: \lneq \:\: \stab^{+}_{Y^j_{n_{i_3}}}(e_{n_{i_3}}) \:\: \lneq \:\: \cdots
\end{equation*}
where $e_n = \tau_{N_2,n}(e) \in Y^j_{n}$. However this is exactly the situation the ACC says cannot happen. $\square$
\end{proof}

Before proving \thref{res:main} we need one final statement about the rigidity of the $Y^j_n$. This essentially says that eventually these complexes look identical at every level.

\begin{lemma}\thlabel{res:pair_pullback}
There is some $N'' \geq N'$ with the following property. Suppose $n,m \geq N''$ and $t, t'$ are triangles at depth $n$ where $(\tau_{n,m}(t),\tau_{n,m}(t'))$ is a stable pair at depth $m$. Then $(t,t')$ is also a (stable) pair at depth $n$. In other words $\tau_{n,m}$ induces a bijection on the set of stable pairs for $n,m \geq N''$.
\end{lemma}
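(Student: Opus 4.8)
The plan is to choose $N''$ so that, inside every equivalence class of $\sim_n$, passing from one level to the next no longer changes anything at all, and then to read off the statement combinatorially. Note first that $\tau_{n,m}$ is already a $G$--equivariant bijection on triangles, hence automatically a bijection between ordered pairs of triangles lying in a common complex; so the only real content is that \emph{adjacency} (and therefore the property of being a pair, and of being a stable pair) is preserved backwards along $\tau_{n,m}$. The mechanism will be that once $\sigma_{n,m}$ is a bijection, each class subcomplex is so rigid that passing down neither subdivides, nor collapses, nor --- eventually --- glues any of its edges.

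First I would fix $N''$. Recall from the proof of \thref{res:components_eventually_stable} that for $n\ge N_2$ there are only finitely many orbits of equivalence classes of $\sim_n$ (Claim~1 there) and, within each class $Y^j$, only finitely many orbits of edges (Claim~2 there). For each of the finitely many choices of a class representative $Y^j$ together with an edge $e$ of $Y^j_{N'}$ meeting a given edge orbit, the groups $\stab^{+}_{Y^j_n}(\tau_{N',n}(e))$ --- where $\tau_{N',n}(e)$ is the image of $e$ in $Y^j_n$, which is meaningful since we stay inside one class (cf. the remark following Claim~2 of \thref{res:components_eventually_stable}) --- form an ascending chain as $n$ increases, and this is exactly the sort of chain that the ACC forbids from being strictly increasing indefinitely (it is the chain appearing in the last paragraph of the proof of \thref{res:components_eventually_stable}). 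Each of these finitely many chains therefore stabilises, and I would take $N''\ge N'$ to be a level past which all of them are constant.

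Next, fixing $j$ and $n,m\ge N''$, I would analyse the edge map inside the class. By \thref{res:components_eventually_stable}, $\tau_{n,m}$ restricts to a bijection $Y^j_n\to Y^j_m$ on triangles, and since $\sigma_{n,m}$ is a bijection one also gets $\stab(Y^j_n)=\stab(Y^j_m)$ (a proper inclusion would make $\sigma_{n,m}$ identify two distinct level-$n$ classes). By \thref{res:same_class_implies_stable} the subcomplex $Y^j_n$ is cutpoint-free and meets every track of the resolution either trivially or in an arc parallel to a vertex; hence none of the operations used in \thref{res:passdown_full} to pass from level $n$ to level $m$ --- the contracting resolution, the splitting over cutpoints, the collapse of tracks --- touches the interior of $Y^j_n$. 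Consequently the per-triangle edge correspondence of \thref{res:edge_passdown} glues into a well-defined map $\mu_{n,m}$ from the edges of $Y^j_n$ to those of $Y^j_m$ which neither subdivides nor collapses edges; it is surjective (every triangle of $Y^j_m$ is $\tau_{n,m}$ of a triangle of $Y^j_n$ and carries its edges onto that triangle's edges) and, by Claim~2 of \thref{res:components_eventually_stable}, a bijection on the finitely many edge orbits. A surjective equivariant map that is a bijection on orbits is injective exactly when no point stabiliser grows, i.e. when $\stab^{+}_{Y^j_n}(e)=\stab^{+}_{Y^j_m}(\mu_{n,m}(e))$ for every $e$, which holds by the choice of $N''$. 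So $\mu_{n,m}$ is a bijection, and therefore $\tau_{n,m}$ is incidence-preserving on $Y^j$ in both directions: an edge $e$ lies on a triangle $t$ of $Y^j_n$ if and only if $\mu_{n,m}(e)$ lies on $\tau_{n,m}(t)$.

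The statement then follows: given $n,m\ge N''$ and triangles $t,t'$ at depth $n$ with $(\tau_{n,m}(t),\tau_{n,m}(t'))$ a stable pair at depth $m$, a stable pair generates $\sim_m$, so $\tau_{n,m}(t)$ and $\tau_{n,m}(t')$ lie in a common class $Y^j_m$; bijectivity of $\sigma_{n,m}$ puts $t$ and $t'$ in the class $Y^j_n$ mapping onto it, in particular in a common complex; they are distinct because $\tau_{n,m}$ is injective; and they share an edge by the two-sided incidence-preservation. So $(t,t')$ is a pair at depth $n$, and it is stable since for any $m'\ge n\,(\ge N'')$ the same argument applied to $\tau_{n,m'}$ makes $(\tau_{n,m'}(t),\tau_{n,m'}(t'))$ a pair in $Y^j_{m'}$. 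The mirror argument pushes a stable pair at depth $n$ forward to a stable pair at depth $m$, and since $\tau_{n,m}$ is a bijection on ordered pairs of triangles sharing a complex, it restricts to a bijection on stable pairs. The step I expect to be the main obstacle is the injectivity of $\mu_{n,m}$: a priori an edge of $Y^j_n$ might be identified, at the reduction step, with a parallel edge absorbed from outside the class and so acquire a strictly larger stabiliser, and excluding this is precisely where the ACC is unavoidable --- together with the bookkeeping (via Claims~1 and~2 of \thref{res:components_eventually_stable}) that reduces the problem to finitely many stabiliser chains.
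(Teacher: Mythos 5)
Your proposal is correct and follows essentially the same route as the paper: reduce the failure of pull-back of a pair to a strict increase of a positively-oriented edge stabiliser inside a class subcomplex (via Claims 1--3 of \thref{res:components_eventually_stable} and the rigidity from \thref{res:same_class_implies_stable}), and then invoke the ACC to rule this out beyond some level $N''$. The only difference is organisational --- you stabilise the finitely many edge-stabiliser chains up front to get $N''$ and then verify that the induced edge map is a bijection, whereas the paper argues pair-by-pair that the bad case occurs only finitely often --- but the mechanism is identical.
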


\begin{proof}
Let $e, e'$ be the respective edges of $t, t'$ which get mapped to the common edge of the pair $(\tau_{n,m}(t),\tau_{n,m}(t'))$. Since $n \geq N'$ we must have $t$ and $t'$ in the same equivalence class of $\sim_n$; call the corresponding subcomplex $Y$. We also must have $e' = ge$ for some $g$ which stabilises $Y$ as $N' > N_2$. The same argument as the proof of Claim~3 in the proof of \thref{res:components_eventually_stable} implies that either $e' = e$ or 
$\stab^{+}_{Y}(e) \lneq \stab^{+}_{\tau_{n,m}(Y)}(\tau_{n,m}(e))$. As in the proof of \thref{res:components_eventually_stable} the ACC says this latter case can only occur finitely many times. $\square$
\end{proof}

\begin{proof}[Proof of \thref{res:main}]
In light of \thref{res:cone_criterion} and \thref{res:triangle_passdown} it suffices to show that every simple cone in a complex of depth at least $N''$ is contained in an equivalence class. We define the \emph{push-forward} of a cone as follows. Let $c$ be the central vertex of a cone $C$. If the resolution $\rho$ induces track(s) on $X$ whose intersection with $C$ is homeomorphic to a circle enclosing $c$ then we let $s$ be the outermost such track. Otherwise set $s=c$. We now define the push-forward of $C$ to be the union of the image of the triangles in $C$ which are in the same component as the image of $s$.

\begingroup%
  \makeatletter%
  \providecommand\color[2][]{%
    \errmessage{(Inkscape) Color is used for the text in Inkscape, but the package 'color.sty' is not loaded}%
    \renewcommand\color[2][]{}%
  }%
  \providecommand\transparent[1]{%
    \errmessage{(Inkscape) Transparency is used (non-zero) for the text in Inkscape, but the package 'transparent.sty' is not loaded}%
    \renewcommand\transparent[1]{}%
  }%
  \providecommand\rotatebox[2]{#2}%
  \newcommand*\fsize{\dimexpr\f@size pt\relax}%
  \newcommand*\lineheight[1]{\fontsize{\fsize}{#1\fsize}\selectfont}%
  \ifx\svgwidth\undefined%
    \setlength{\unitlength}{413.85826772bp}%
    \ifx\svgscale\undefined%
      \relax%
    \else%
      \setlength{\unitlength}{\unitlength * \real{\svgscale}}%
    \fi%
  \else%
    \setlength{\unitlength}{\svgwidth}%
  \fi%
  \global\let\svgwidth\undefined%
  \global\let\svgscale\undefined%
  \makeatother%
  \begin{picture}(1,0.26027397)%
    \lineheight{1}%
    \setlength\tabcolsep{0pt}%
    \put(0.03662987,0.3576232){\color[rgb]{0,0,0}\makebox(0,0)[lt]{\begin{minipage}{0.93116981\unitlength}\centering \end{minipage}}}%
    \put(0,0){\includegraphics[width=\unitlength,page=1]{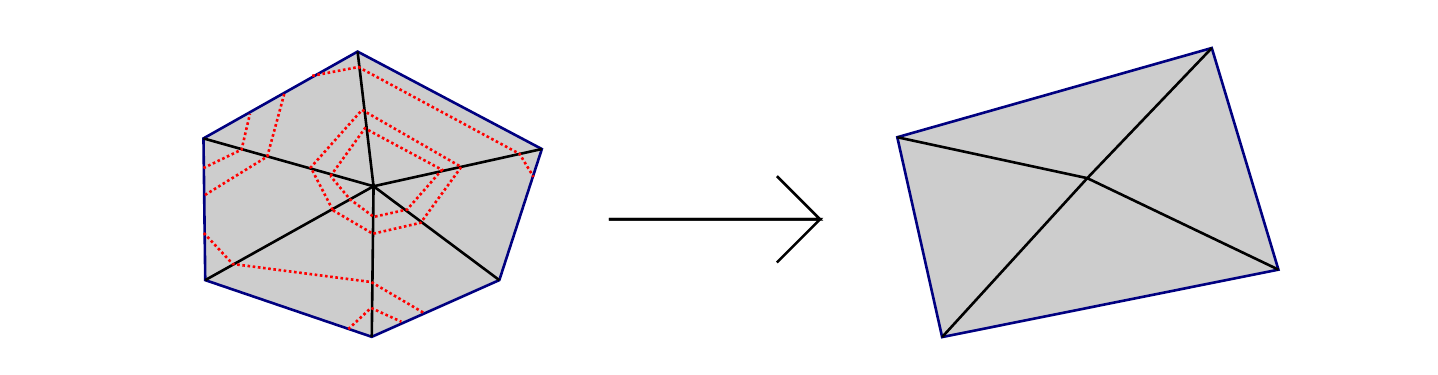}}%
    \put(0.26920358,0.1383065){\color[rgb]{0,0,0}\makebox(0,0)[t]{\lineheight{1.25}\smash{\begin{tabular}[t]{c}$c$\end{tabular}}}}%
    \put(0.32156247,0.11904238){\color[rgb]{1,0,0}\makebox(0,0)[t]{\lineheight{1.25}\smash{\begin{tabular}[t]{c}$s$\end{tabular}}}}%
    \put(0.15361893,0.18177419){\color[rgb]{0,0,0.50196078}\makebox(0,0)[t]{\lineheight{1.25}\smash{\begin{tabular}[t]{c}$\partial D$\end{tabular}}}}%
    \put(0.65548021,0.1838266){\color[rgb]{0,0,0.50196078}\makebox(0,0)[t]{\lineheight{1.25}\smash{\begin{tabular}[t]{c}$\partial D'$\end{tabular}}}}%
    \put(0,0){\includegraphics[width=\unitlength,page=2]{ConePushForward.pdf}}%
    \put(0.75426405,0.14373988){\color[rgb]{1,0,0}\makebox(0,0)[t]{\lineheight{1.25}\smash{\begin{tabular}[t]{c}$s$\end{tabular}}}}%
  \end{picture}%
\endgroup%

Let $C$ be a simple cone at depth $n \geq N''$. Apply push-forwards to $C$ until we reach a cone with minimal circumference; call this new cone $C'$. Observe that $C'$ is made of consecutive stable pairs and so is contained in an equivalence class. If $C'$ has the same circumference as $C$ then we are done, so assume that the circumference of $C'$ is strictly smaller than that of $C$. In this case we see $C'$ must contain a (stable) pair of adjacent triangles that weren't adjacent in $C$; however this contradicts \thref{res:pair_pullback}. $\square$
\end{proof}


\begin{thebibliography}{99}





\bibitem{BestvinaFeighn_small}
M.Bestvina, M.Feighn:
\textit{Bounding the complexity of simplicial group actions on trees.}
Invent. Math. 103 (1991), no. 3, 449--469.


\bibitem{Bowditch_canon_JSJ}
B.H.Bowditch:
\textit{Cut points and canonical splittings of hyperbolic groups.}
Acta Math. 180 (1998), no. 2, 145--186.


\bibitem{DelzantPotyagailo_wrong}
T.Delzant, L.Potyagailo:
\emph{Accessibilit\'{e} hi\'{e}rarchique des groupes de pr\'{e}sentation finie. (French. English summary) [Hierarchical accessibility of finitely presented groups]}
Topology 40 (2001), no. 3, 617--629.


\bibitem{Dunwoody_inaccess}
M.J.Dunwoody:
\textit{An inaccessible group.}
Geometric group theory, Vol. 1 (Sussex, 1991), 75--78,
London Math. Soc. Lecture Note Ser., 181, Cambridge Univ. Press, Cambridge, 1993.


\bibitem{Dunwoody_fp}
M.J.Dunwoody:
\textit{The accessibility of finitely presented groups.} 
Invent. Math. 81 (1985), no. 3, 449--457.


\bibitem{EditedGromov}
\'{E}.Ghys and P.de la Harpe.
\emph{Sur les groupes hyperboliques d'apr\`{e}s Mikhael Gromov. (French)}
Papers from the Swiss Seminar on Hyperbolic Groups held in Bern, 1988. Edited by \'{E}. Ghys and P. de la Harpe. Progress in Mathematics, 83. Birkh\"{a}user Boston, Inc., Boston, MA, 1990. xii+285 pp. ISBN: 0--8176--3508--4


\bibitem{Grushko}
I.A.Grushko:
\emph{On the bases of a free product of groups.}
Matematicheskii Sbornik, vol 8 (1940), pp. 169--182



\bibitem{GuirardelLevitt_JSJ}
V.Guirardel, G.Levitt:
\textit{JSJ decompositions of groups. (English, French summary)}
Ast\'{e}risque No. 395 (2017), vii+165 pp. ISBN: 978--2--85629--870--1


\bibitem{GuirardelLevitt_cylinders}
V.Guirardel, G.Levitt:
\textit{Trees of cylinders and canonical splittings. (English summary)}
Geom. Topol. 15 (2011), no. 2, 977--1012.

\bibitem{Haken}
W.Haken:
\emph{\"{U}ber das Hom\"{o}omorphieproblem der 3-Mannigfaltigkeiten. I. (German)}
Math. Z. 80 (1962), 89--120.


\bibitem{LouderTouikan_strong}
L.Louder, N.Touikan:
\textit{Strong accessibility for finitely presented groups. (English summary)}
Geom. Topol. 21 (2017), no. 3, 1805--1835.


\bibitem{Serre_trees}
J-P.Serre:
\emph{Trees.}
Translated from the French original by John Stillwell. Corrected 2nd printing of the 1980 English translation. Springer Monographs in Mathematics. Springer--Verlag, Berlin, 2003. x+142 pp. ISBN: 3--540--44237--5



\end{thebibliography}
\end{document}